\documentclass[10pt,leqno]{amsart}
\usepackage{amssymb,amsthm}
\usepackage{amsmath}
\usepackage{setspace}
\usepackage{dcpic,pictexwd}
\usepackage[all]{xy}
\usepackage[pdftex]{graphicx}
\usepackage{mathrsfs}
\usepackage[pdftex]{hyperref}
\usepackage{bbm}

\theoremstyle{definition}
 \newtheorem{definition}{Definition}[section]

\theoremstyle{plain}
 \newtheorem{proposition}[definition]{Proposition}

\theoremstyle{plain}
 \newtheorem{theorem}[definition]{Theorem}
 \newtheorem{claim}[definition]{Claim}
\theoremstyle{definition}
 \newtheorem{example}[definition]{Example}

\theoremstyle{plain}
 \newtheorem{lemma}[definition]{Lemma}

\theoremstyle{plain}
 \newtheorem{corollary}[definition]{Corollary}

\theoremstyle{remark}
 \newtheorem{remark}[definition]{Remark}

\theoremstyle{definition}

\theoremstyle{plain}

\setlength{\textwidth}{6.5in}
\setlength{\textheight}{8.5in}
\setlength{\evensidemargin}{0pt}
\setlength{\oddsidemargin}{0pt}
\setlength{\topmargin}{0pt}
\setlength{\footskip}{0.5in}

\newcommand{\Ext}{\mathrm{Ext}}
\newcommand{\End}{\mathrm{End}}

\newcommand{\Hom}{\mathrm{Hom}}
\newcommand{\SHom}{\underline{\mathrm{Hom}}}

\newcommand{\Ca}{\mathcal{C}}
\newcommand{\Fun}{\mathrm{F}}

\newcommand{\Def}{\mathrm{Def}}

\newcommand{\Ob}{\mathrm{Ob}}

\newcommand{\Z}{\mathbb{Z}}
\newcommand{\SEnd}{\underline{\End}}
\newcommand{\A}{\Lambda}

\newcommand{\m}{\mathfrak{m}}
\renewcommand{\k}{\Bbbk}

\newcommand{\Size}{500}
\newcommand{\Sized}{300}

\hypersetup{
    bookmarks=true,         % show bookmarks bar?
    unicode=false,          % non-Latin characters in AcrobatÕs bookmarks
    pdftoolbar=true,        % show AcrobatÕs toolbar?
    pdfmenubar=true,        % show AcrobatÕs menu?
    pdffitwindow=false,     % window fit to page when opened
    pdfstartview={FitH},    % fits the width of the page to the window
    pdftitle={Universal Deformation Rings for Complexes Over Finite-Dimensional Algebras},    % title
    pdfauthor={Jose A. Velez-Marulanda},     % author
    pdfsubject={Derived Categories, Singularity Categories, Universal deformation rings},   % subject of the document
    pdfcreator={Jose A. Velez-Marulanda},   % creator of the document
    pdfproducer={Jose A. Velez-Marulanda}, % producer of the document
    pdfkeywords={Derived Categories} {Singularity Categories}{Universal Deformation Rings}, % list of keywords
    pdfnewwindow=true,      % links in new window
    colorlinks=true,       % false: boxed links; true: colored links
    linkcolor=red,          % color of internal links
    citecolor=blue,        % color of links to bibliography
    filecolor=magenta,      % color of file links
    urlcolor=blue           % color of external links
}

\title[Universal Deformation Rings for Complexes]{Universal Deformation Rings for Complexes over Finite-Dimensional Algebras}
\author{Jos\'e A. V\'elez-Marulanda}
\address{Department of Mathematics, Valdosta State University,
2072 Nevins Hall, 1500 N. Patterson St, Valdosta, GA,  31698-0040}
\email{javelezmarulanda AT valdosta DOT edu}

\keywords{Universal deformation rings \and Derived categories \and Singularity Categories \and Gorenstein projective modules}

\begin{document}
\renewcommand{\labelenumi}{\textup{(\roman{enumi})}}
\renewcommand{\labelenumii}{\textup{(\roman{enumi}.\alph{enumii})}}
\numberwithin{equation}{section}

%\halfspacing

\begin{abstract}
Let $\k$ be field of arbitrary characteristic and let $\A$ be a finite dimensional $\k$-algebra. From results previously obtained by F.M Bleher and the author, it follows that if $V^\bullet$ is an object of the bounded derived category $\mathcal{D}^b(\A\textup{-mod})$ of $\A$, then $V^\bullet$ has a well-defined versal deformation ring $R(\A, V^\bullet)$, which is complete local commutative Noetherian $\k$-algebra with residue field $\k$, and which is universal provided that $\Hom_{\mathcal{D}^b(\A\textup{-mod})}(V^\bullet, V^\bullet)=\k$. Let $\mathcal{D}_\textup{sg}(\A\textup{-mod})$ denote the singularity category of $\A$ and assume that $V^\bullet$ is a bounded complex whose terms are all finitely generated Gorenstein projective left $\A$-modules. In this article we prove that if $\Hom_{\mathcal{D}_\textup{sg}(\A\textup{-mod})}(V^\bullet, V^\bullet)=\k$, then the versal deformation ring $R(\A, V^\bullet)$ is universal. We also prove that certain singular equivalences of Morita type (as introduced by X. W. Chen and L. G. Sun) preserve the isomorphism class of versal deformation rings of bounded complexes whose terms are finitely generated Gorenstein projective $\A$-modules.
\end{abstract}
%\dedicatory{To my good friends Wildomar Alarc\'on and Ricardo Pe{\~n}a for their unconditional support.}
%\thanks{The second author was supported by the Release Time for Research Scholarship of the Office of Academic Affairs  at the Valdosta State University.}
\subjclass[2010]{16G10 \and 16G20 \and 20C20}
\maketitle
%\tableofcontents

\section{Introduction}

Let $\k$ be a field of arbitrary characteristic, let $V$ be a $\A$-module that has finite dimension over $\k$ and let $R$ be a complete local commutative Noetherian $\k$-algebra with residue field $\k$. A lift of $V$ over $R$ is an $R\otimes_\k\A$-module $M$ that is free over $R$ together with an $\A$-module isomorphism $\phi: \k\otimes_RM\to V$.   A deformation of $V$ over $R$ is defined to be an isomorphism class of lifts of $V$ over $R$. In \cite{blehervelez}, F. M. Bleher and the author to study universal deformations rings and deformations of modules for arbitrary finite dimensional $\k$ algebras. In particular, they proved that when $\A$ is a self-injective algebra and $V$ is a $\A$-module with finite dimension over $\k$ such that the stable endomorphism ring of $V$ is trivial, then $V$ has an universal deformation ring $R(\A,V)$ that is stable under taking syzygies. This approach was used  in \cite{bleher9,blehervelez,velez}, to study universal deformation rings for certain self-injective algebras which are not Morita equivalent to a block of a group algebra.  In \cite{velez2}, the results in \cite{blehervelez} were extended for the case when $\A$ is a Gorenstein $\k$-algebra (as introduced in \cite{auslander2}), and for when $V$ is a (maximal) Cohen-Macaulay $\A$-module (as introduced in \cite{buchweitz}). More recently, F. M. Bleher and the author in \cite{blehervelez2} extended some of the results in \cite{blehervelez} to bounded complexes over finite dimensional algebras by adapting the techniques in \cite{bleher13,bleher14}. More precisely, they proved that if $V^\bullet$ is a bounded complex of finitely generated $\A$-modules, then $V^\bullet$ has a well-defined versal deformation ring $R(\A,V^\bullet)$, which is universal provided that the endomorphism ring of $V^\bullet$ over the derived category of $\A$ is trivial. In \cite{velez2}, the results in \cite{blehervelez} were extended for the case when $\A$ is a Gorenstein $\k$-algebra (as introduced in \cite{auslander2}), and for when $V$ is a (maximal) Cohen-Macaulay $\A$-module (as introduced in \cite{buchweitz}), i.e., $\Ext_\A^i(V,\A)=0$ for all $i>0$. Following \cite{enochs0,enochs}, we say that a (not necessarily finitely generated) left $\A$-module $M$ is {\it Gorenstein projective} provided that there exists an exact sequence of (not necessarily finitely generated) projective left $\A$-modules 
\begin{equation*}
\cdots\to P^{-2}\xrightarrow{f^{-2}} P^{-1}\xrightarrow{f^{-1}} P^0\xrightarrow{f^0}P^1\xrightarrow{f^1}P^2\to\cdots
\end{equation*}  
such that $M=\mathrm{coker}\,f^0$, and for all integers $i>0$ and $j\in \Z$ we have $\Ext_\A^i(\ker\,f^j, \A)=0$. 
Following \cite{auslander4} and \cite{avramov2}, we say that a finitely generated left $\A$-module $M$ is of {\it Gorentein dimension zero} or {\it totally reflexive} provided that the left $\A$-modules $M$ and $\Hom_\A(\Hom_\A(M,\A),\A)$ are isomorphic, and that $\Ext_\A^i(M,\A)=0=\Ext_\A^i(\Hom_\A(M,\A),\A)$ for all $i>0$.  It is well-known that finitely generated Gorenstein projective left $\A$-modules coincided with those that are totally reflexive (see e.g. \cite[Lemma 2.1.4]{chenxw4}). Moreover, if $\A$ is a Gorenstein $\k$-algebra, then finitely generated Gorenstein projective left $\A$-modules and (maximal) Cohen-Macaulay $\A$-modules coincide (see \cite{auslander3}). However, it follows from an example given by J.I. Miyachi (see \cite[Example A.3]{hoshino-koga}) that in general not all (maximal) Cohen-Macaulay modules over a finite dimensional algebra are Gorenstein projective.
Recall that the singularity category $\mathcal{D}_{\mathrm{sg}}(\A\textup{-mod})$ of $\A$ is the Verdier quotient of the bounded derived 
category of finitely generated left $\A$-modules $\mathcal{D}^b(\A\textup{-mod})$ by the full subcategory $\mathcal{K}^b(\A\textup{-proj})$ of perfect complexes (see 
\cite{verdier}, \cite{krause3} and \S \ref{sec2} for more details). If $\A$ is self-injective, then it follows from \cite[Thm. 2.1]
{rickard1} that $\mathcal{D}_{\mathrm{sg}}(\A\textup{-mod})$ is equivalent as a triangulated category to $\A\textup{-\underline{mod}}$ the 
stable category of finitely generated left $\A$-modules. If $\A$ is Gorenstein, then it follows from \cite{buchweitz} (see also \cite[\S4.6]{happel3} for the case when $\k$ is algebraically closed) that $\mathcal{D}_{\mathrm{sg}}(\A\textup{-mod})$ is equivalent as a triangulated 
category to the stable category of finitely generated Gorenstein projective left $\A$-modules. In particular, if $\A$ has finite global dimension, then its singularity category is trivial.  In a more general setting, D. Orlov rediscovered independently the notion of singularity category in the context of algebraic geometry and 
mathematical physics (see e.g. \cite{orlov1,orlov2}).  This has motivated the study of the structure of singularity categories under different situations (see e.g. \cite{chenlu,chenxw1,chenxw2,chenye,kalck,shen} and their references).

The first goal of this article is to prove the following result.

\begin{theorem}\label{thm01}
Let $\A$ be a finite dimensional $\k$-algebra, and let $V^\bullet$ be a {\bf bounded} complex in $\mathcal{D}^b(\A\textup{-mod})$ such that all of its terms are Gorenstein projective left $\A$-modules. 
\begin{enumerate}
\item If $\Hom_{\mathcal{D}_\textup{sg}(\A\textup{-mod})}(V^\bullet, V^\bullet)=\k$, then the versal deformation ring $R(\A,V^\bullet)$ is universal. 
\item For all perfect complexes $P^\bullet$ over $\A$, the versal deformation ring $R(\A, V^\bullet\oplus P^\bullet)$ is isomorphic to $R(\A,V^\bullet)$.
\end{enumerate} 
\end{theorem}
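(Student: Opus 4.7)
The plan is to deduce Theorem~\ref{thm01} from the pro-representability result in \cite{blehervelez2}, which ensures that $R(\A, V^\bullet)$ is universal as soon as $\End_{\mathcal{D}^b(\A\textup{-mod})}(V^\bullet)=\k$. I would use part~(ii) to reduce (i) to the case where $V^\bullet$ has no perfect direct summand in $\mathcal{D}^b(\A\textup{-mod})$, and then show that under the singularity-category hypothesis the full derived endomorphism ring becomes a local $\k$-algebra with residue field $\k$.

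For part~(ii), I would construct a natural isomorphism $\Def_\A(V^\bullet\oplus P^\bullet)\cong \Def_\A(V^\bullet)$ of deformation functors on the category of complete local Artinian $\k$-algebras with residue field $\k$, which yields the isomorphism of versal rings by Yoneda. Given a lift $M^\bullet$ of $V^\bullet\oplus P^\bullet$ over $R$, classical idempotent lifting applied to the surjection $\End_{\mathcal{D}^b(R\otimes_\k\A\textup{-mod})}(M^\bullet)\twoheadrightarrow \End_{\mathcal{D}^b(\A\textup{-mod})}(V^\bullet\oplus P^\bullet)$, whose kernel lies in the Jacobson radical since $R$ is local Artinian, would produce a decomposition $M^\bullet\cong M_1^\bullet\oplus M_2^\bullet$ with $M_1^\bullet$ a lift of $V^\bullet$ and $M_2^\bullet$ a lift of $P^\bullet$. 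Since $P^\bullet$ is perfect, a standard induction on small extensions using the vanishing of $\Ext^1$-obstructions shows that $M_2^\bullet$ is quasi-isomorphic to the trivial lift $R\otimes_\k^{\mathbf{L}} P^\bullet$, so the assignment $M^\bullet\mapsto M_1^\bullet$ descends to a bijection on deformation classes.

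For part~(i), using (ii) I may assume $V^\bullet$ has no perfect direct summand. Consider the canonical surjection of finite-dimensional $\k$-algebras
\begin{equation*}
\pi\colon \End_{\mathcal{D}^b(\A\textup{-mod})}(V^\bullet)\twoheadrightarrow \End_{\mathcal{D}_\textup{sg}(\A\textup{-mod})}(V^\bullet)=\k,
\end{equation*}
whose kernel $I$ consists of endomorphisms that factor through a perfect complex. Any idempotent $e\in I$ factoring as $V^\bullet\xrightarrow{\alpha}P^\bullet\xrightarrow{\beta}V^\bullet$ splits a direct summand of $V^\bullet$ (via Krull--Schmidt in $\mathcal{D}^b(\A\textup{-mod})$) that is simultaneously a direct summand of $P^\bullet$, hence perfect, forcing $e=0$ by assumption. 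A standard argument for ideals of finite-dimensional $\k$-algebras then gives $I\subseteq \rad(\End_{\mathcal{D}^b(\A\textup{-mod})}(V^\bullet))$, so the full endomorphism ring is local with residue field $\k$. Finally, I would verify that the proof of pro-representability in \cite{blehervelez2} requires only that $\End_{\mathcal{D}^b(\A\textup{-mod})}(V^\bullet)$ be local with residue field $\k$: the essential content is that every endomorphism of $V^\bullet$ is either invertible or nilpotent, which suffices to absorb an automorphism of a lift lying over a nilpotent endomorphism of $V^\bullet$ and thereby identify two isomorphic lifts as the same deformation.

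The main obstacle is this last verification: extending the pro-representability criterion of \cite{blehervelez2} from the hypothesis $\End_{\mathcal{D}^b(\A\textup{-mod})}(V^\bullet)=\k$ to the weaker local hypothesis. The delicate point is lifting a nilpotent endomorphism $n$ of $V^\bullet$ along a small extension to a nilpotent endomorphism of a given lift. A clean way to organize this is to observe that, under the reduction of (ii) together with the identification $I\subseteq \rad$, every nilpotent element of $\End_{\mathcal{D}^b(\A\textup{-mod})}(V^\bullet)$ factors through a perfect complex, so the rigidity of perfect complexes under lifting (already established in the proof of (ii)) furnishes the required nilpotent lifts. Should this approach prove subtle, a fallback is to develop the deformation theory directly inside the stable category of Gorenstein projective modules and invoke Buchweitz's fully faithful embedding into $\mathcal{D}_\textup{sg}(\A\textup{-mod})$, bypassing the comparison between derived and singular endomorphism rings entirely.
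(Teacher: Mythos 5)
There is a genuine gap, and it sits exactly where you defer the work. For part (i) your route is to replace the paper's hypothesis by ``$\End_{\mathcal{D}^b(\A\textup{-mod})}(V^\bullet)$ is local with residue field $\k$'' and then assert that the representability argument of \cite{blehervelez2} goes through under this weaker hypothesis; you yourself flag this as the main obstacle and never carry it out. That extension is not a formality: the actual content of the proof is to control endomorphisms of a quasi-lift $M^\bullet$ over an Artinian $R$ \emph{modulo perfect complexes over $R\A$}, i.e.\ one must (a) lift morphisms from $M^\bullet$ into perfect complexes along small extensions $R\to R_0$ (this needs $\Ext^1_{\mathcal{D}^-(\A)}(V^\bullet,P^\bullet)=0$, which is where Gorenstein projectivity of the terms enters), (b) lift factorizations through perfect complexes from $R_0\A$ to $R\A$ (which needs that every perfect complex over $R_0\A$ lifts to one over $R\A$), (c) prove surjectivity of $R\to\Hom_{\mathcal{D}^-_{\mathfrak{perf}}(R\A)}(M^\bullet,M^\bullet)$, and (d) verify Schlessinger's (H$_4$). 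None of this is implied by locality of $\End_{\mathcal{D}^b}(V^\bullet)$, and your ``rigidity of perfect complexes'' remark does not substitute for it. Moreover, even the first step of your reduction is unjustified: the map $\End_{\mathcal{D}^b(\A\textup{-mod})}(V^\bullet)\to\End_{\mathcal{D}_{\textup{sg}}(\A\textup{-mod})}(V^\bullet)$ is \emph{not} a canonical surjection in general, because morphisms in a Verdier quotient are roofs $V^\bullet\to V''^\bullet\leftarrow V^\bullet$ with perfect cone, and $\Hom_{\mathcal{D}^b}(V^\bullet,C^\bullet)$ need not vanish for $C^\bullet$ perfect; establishing surjectivity here is itself a Buchweitz-type comparison that you would have to prove for complexes of Gorenstein projectives. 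The paper avoids both issues by never passing to $\End_{\mathcal{D}^b}$: it works directly with the calculus of fractions, using $\Hom_{\mathcal{D}^-_{\mathfrak{perf}}(\A)}(V^\bullet,V^\bullet)=\k$ to write any comparison endomorphism as $\bar s\cdot 1_{V^\bullet}$ plus a map factoring through a perfect complex, and then lifts that factorization over $R$.

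Part (ii) has a parallel problem. Your idempotent-lifting step invokes a surjection $\End_{\mathcal{D}^b(R\A\textup{-mod})}(M^\bullet)\twoheadrightarrow\End_{\mathcal{D}^b(\A\textup{-mod})}(V^\bullet\oplus P^\bullet)$ with kernel in the radical; but this map is not surjective in general, since the obstruction to lifting an endomorphism along a small extension lives in $\Ext^1_{\mathcal{D}^-(\A)}(V^\bullet\oplus P^\bullet,V^\bullet\oplus P^\bullet)$, whose $\Ext^1(V^\bullet,V^\bullet)$-corner is exactly what is allowed to be nonzero (otherwise the deformation theory is trivial). What is true, and what the paper uses, is that only the maps involving the perfect summand lift unobstructedly: $\Ext^1(V^\bullet,P^\bullet)=0$ (Lemma \ref{lemma:1.3}), $\Ext^1(P^\bullet,P^\bullet)=0$ (Corollary \ref{cor:1.6}), and $\Ext^1_{\mathcal{K}^-(R\A)}(P_R^\bullet,-)=0$ against bounded complexes (Lemma \ref{lemma:1.7}). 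The paper therefore lifts the inclusion $P^\bullet\to V^\bullet\oplus P^\bullet$ to a map $g:P_R^\bullet\to A^\bullet$, shows $g$ is a section (Lemma \ref{lemma:2.8}), and splits off the trivial lift of $P^\bullet$, rather than lifting an idempotent of the full endomorphism ring. Your conclusion is correct and your instinct that perfectness kills the relevant obstructions is right, but as written both halves of the argument rest on surjectivity statements that are false or unproven, so the proposal does not constitute a proof without essentially redoing the paper's Claims \ref{claim1}--\ref{claim5} and Lemmas \ref{lemma:2.8}--\ref{lemma:2.10}.
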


Examples of complexes $V^\bullet$ as in the hypothesis of Theorem \ref{thm01} can be found in the study of $\mathcal{G}$-resolutions of modules of finite Gorenstein dimension as explained in \cite{avramov2}, in the study of Gorenstein derived categories as introduced in \cite{gaozhang}, and in the study of Gorenstein singularity categories as introduced in \cite{baoduzhao}. 

It was also proved in \cite{blehervelez2} that versal deformation rings of complexes are preserved by derived equivalences induced by so-called nice two-sided tilting complexes. Moreover, it was proved in \cite{blehervelez2} that the isomorphism class of versal deformation rings of modules is preserved by stable equivalences of Morita type (as introduced by M. Brou\'e in \cite{broue}) between self-injective $\k$-algebras. In \cite{bekkert-giraldo-velez}, V. Bekkert, H. Giraldo and the author proved that the isomorphism class of versal deformation rings of (maximal) Cohen-Macaulay modules over Gorenstein algebras is also preserved by so-called {\it singular equivalences of Morita type}, which extends the aformentioned result in \cite{blehervelez2}. These singular equivalences of Morita type were introduced by X. W. Chen and L. G. Sun in an unpublished article (see \cite{chensun}) and then formally discussed by G. Zhou and A. Zimmermann in \cite{zhouzimm} as a way of generalizing the concept of stable equivalence of Morita type to singularity categories. 

The second goal of this article consists in proving the following result (for more details see Definition \ref{defi:3.2} and Theorem \ref{thm:3.6}).

\begin{theorem}\label{thm02}
Let $\A$ and $\Gamma$ be finite dimensional $\k$-algebras, and assume that ${_\Gamma}X_\A$, ${_\A} Y_\Gamma$ are bimodules that induce a singular equivalence of Morita type in the sense of \cite{chensun,zhouzimm} and Definition \ref{defi:3.2}, such that $\Hom_\Gamma(X,\Gamma)$ and $\Hom_\A(Y,\A)$ are of finite projective dimension as a left $\A$-module and as a left $\Gamma$-module, respectively. Let $V^\bullet$ be a bounded complex in $\mathcal{D}^b(\A\textup{-mod})$ whose terms are all finitely generated Gorenstein projective left $\A$-modules. Then the terms of $X\otimes_\A V^\bullet$ are all finitely generated Gorenstein projective left  $\Gamma$-modules, and the versal deformation rings $R(\A, V^\bullet)$ and $R(\Gamma, X\otimes_\A V^\bullet)$ are isomorphic in $\hat{\Ca}$. 
\end{theorem}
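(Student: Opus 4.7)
The plan is to transport the deformation theory across the singular equivalence using $F=X\otimes_\A -$ and its quasi-inverse $G=Y\otimes_\Gamma -$, and then to exploit Theorem~\ref{thm01}(ii) to absorb the perfect-complex discrepancies inherent in a singular equivalence of Morita type. First I would verify that $F$ sends finitely generated Gorenstein projective left $\A$-modules to finitely generated Gorenstein projective left $\Gamma$-modules. Since $X$ is projective on both sides (a standard part of the definition of a singular equivalence of Morita type), $F$ is exact and takes projectives to projectives; combined with the hypothesis that $\Hom_\Gamma(X,\Gamma)$ has finite projective dimension as a left $\A$-module, the tensor-hom adjunction $\Hom_\Gamma(X\otimes_\A -,\Gamma)\cong\Hom_\A(-,\Hom_\Gamma(X,\Gamma))$ shows that any totally acyclic complex of projectives witnessing a module $M\in\A\textup{-mod}$ as Gorenstein projective maps under $F$ to a totally acyclic complex of projective $\Gamma$-modules witnessing $F(M)$ as Gorenstein projective. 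Applying $F$ term-by-term thus yields a bounded complex $X\otimes_\A V^\bullet$ in $\mathcal{D}^b(\Gamma\textup{-mod})$ with finitely generated Gorenstein projective terms.

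Next, the very definition of the singular equivalence provides a triangle equivalence $\bar F:\mathcal{D}_\mathrm{sg}(\A\textup{-mod})\to\mathcal{D}_\mathrm{sg}(\Gamma\textup{-mod})$ sending $V^\bullet$ to $X\otimes_\A V^\bullet$, so that
\[
\Hom_{\mathcal{D}_\mathrm{sg}(\Gamma\textup{-mod})}(X\otimes_\A V^\bullet,X\otimes_\A V^\bullet)\;\cong\;\Hom_{\mathcal{D}_\mathrm{sg}(\A\textup{-mod})}(V^\bullet,V^\bullet)\;=\;\k.
\]
By Theorem~\ref{thm01}(i) applied on each side, both $R(\A,V^\bullet)$ and $R(\Gamma, X\otimes_\A V^\bullet)$ are already \emph{universal}, so it suffices to construct a natural isomorphism between the corresponding deformation functors on $\hat{\Ca}$.

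For $R\in\Ob(\hat{\Ca})$ I would define $F_R=(R\otimes_\k X)\otimes_{R\otimes_\k\A}-$ and $G_R=(R\otimes_\k Y)\otimes_{R\otimes_\k\Gamma}-$. Given a lift $(M^\bullet,\phi)$ of $V^\bullet$ over $R$, right-$\A$-projectivity of $X$ together with $R$-freeness of each $M^i$ makes every $F_R(M^i)\cong X\otimes_\A M^i$ free over $R$, and reduction modulo the maximal ideal of $R$ recovers $X\otimes_\A V^\bullet$ through $\phi$; hence $F_R(M^\bullet)$ is a lift of $X\otimes_\A V^\bullet$ over $R$, and passing to isomorphism classes produces a natural transformation of deformation functors. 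Composing with $G_R$ and using the bimodule isomorphisms $Y\otimes_\Gamma X\cong\A\oplus P_\A$ and $X\otimes_\A Y\cong\Gamma\oplus P_\Gamma$ with $P_\A,P_\Gamma$ perfect (an essential feature of a singular equivalence of Morita type), Theorem~\ref{thm01}(ii) absorbs the perfect summands and shows that $G_R\circ F_R$ and $F_R\circ G_R$ induce the identity on deformations. This produces the desired isomorphism $R(\A,V^\bullet)\cong R(\Gamma, X\otimes_\A V^\bullet)$ in $\hat{\Ca}$.

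The main obstacle will be this last step: the bimodule isomorphisms above hold over $\k$, so one must check that their base-change to $R$ remains compatible both with the $R$-freeness of the terms and with the residual identification $\phi$, so that Theorem~\ref{thm01}(ii) can be invoked not only at the residue field but uniformly on the level of deformation functors over an arbitrary $R\in\Ob(\hat{\Ca})$.
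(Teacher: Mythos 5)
Your plan follows essentially the same route as the paper's proof of Theorem \ref{thm:3.6}: base-change the bimodules to $X_R=R\hat{\otimes}_\k X$ and $Y_R=R\hat{\otimes}_\k Y$, send a quasi-lift $(M^\bullet,\phi)$ to $(X_R\hat{\otimes}_{R\A}M^\bullet, X_R\hat{\otimes}_{R\A}\phi)$, and use the decompositions $Y_R\hat{\otimes}_{R\Gamma}X_R\cong R\A\oplus P_R$ and $X_R\hat{\otimes}_{R\A}Y_R\cong R\Gamma\oplus Q_R$ together with Lemma \ref{lemma:2.10} (Theorem \ref{thm01}(ii)) to absorb the perfect summands and conclude that the induced maps on deformation sets are bijective and natural. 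Two remarks. First, your intermediate step invoking Theorem \ref{thm01}(i) to assert that both rings are universal is unjustified: Theorem \ref{thm02} does not assume $\Hom_{\mathcal{D}_\textup{sg}(\A\textup{-mod})}(V^\bullet,V^\bullet)=\k$, so you cannot conclude universality. Fortunately this step is also unnecessary -- a natural isomorphism of deformation functors already forces the versal (pro-representing hull) rings to be isomorphic, which is exactly what the paper establishes. Second, the "main obstacle" you correctly flag at the end is resolved in the paper by observing that $P\hat{\otimes}_\A V^\bullet$ (and likewise $Q\hat{\otimes}_\Gamma(X\hat{\otimes}_\A V^\bullet)$) is a perfect complex with universal deformation ring $\k$ by Corollary \ref{cor:3.5} and Lemma \ref{lemma:2.10}, so that \emph{every} quasi-lift of it over $R$, in particular $P_R\hat{\otimes}_{R\A}M^\bullet$, is isomorphic to the trivial one $R\hat{\otimes}_\k(P\hat{\otimes}_\A V^\bullet)$; this is what lets the composite $G_R\circ F_R$ be identified with the injection of Lemma \ref{lemma:2.10} uniformly over all Artinian $R$, with continuity handling general $R$ in $\hat{\Ca}$.
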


%Recall that (maximal) Cohen-Macaulay modules coincide with the ones with finite Gorenstein dimension zero for the case when the underlying algebra is Gorenstein (see e.g. \cite{auslander3,buchweitz}). However, it follows from an example given by J.I. Miyachi (see \cite[Example A.3]{hoshino-koga}) that in general not all (maximal) Cohen-Macaulay modules over a finite dimensional algebra are of Gorenstein dimension zero. 

This article is organized as follows. In \S \ref{sec2}, we fix the notation that will be used in this article, review some preliminary definitions and properties concerning perfect complexes over finite dimensional algebras, and provide some basic result that will be used in the following sections. We also review some definitions and important properties concerning deformations and (uni)versal deformation rings of complexes over finite dimensional algebras as discussed in \cite{blehervelez2}. In \S \ref{section3} we prove Theorem \ref{thm01} by carefully adapting some of the ideas in the proof of \cite[Thm. 2.6]{blehervelez} to the context of derived categories. In \S\ref{section4}, we prove Theorem \ref{thm02} by adapting the ideas in the proofs of \cite[Prop. 3.2.6]{blehervelez2} and \cite[Thm. 3.4]{bekkert-giraldo-velez} to our context. As a consequence, we obtain that universal deformation rings of complexes whose terms are finitely generated modules of Gorenstein dimension zero and whose endomorphism ring (as an objected of the singularity category) is trivial, are invariant under certain singular equivalences of Morita type. 

%Finally, in \S \ref{section5} we discuss an example of a non-Gorenstein finite dimensional $\k$-algebra $\A_0$, which has exactly two isomorphism classes of modules of Gorenstein dimension zero.  If $V_1$ and $V_2$ are $\A_0$-modules representing such isomorphism classes, then we prove that $\Omega V_1=V_2$, $\Hom_{\mathcal{D}_\textup{sg}(\A_0\textup{-mod})}(V_i,V_i)=\k$ for $i=1,2$, and that the corresponding universal deformation rings $R(\A,V_1)$ and $R(\A,V_2$ are both isomorphic to $\k[[t]]/(t^3)$.
We refer the reader to look at \cite{auslander4,auslander3,avramov2,beligiannis1,beligiannis3,beligiannis2,buchweitz,enochs0,enochs} (and their references) for basic concepts concerning (maximal) Cohen-Macaulay, totally reflexive and Gorenstein projective modules as well as their applications in other settings, and refer to \cite{krause3,verdier} for getting detailed information concerning localizations of triangulated categories and Verdier quotients.

\section{Notations, preliminaries and basic results}\label{sec2}
In this article, $\k$ denotes a field of arbitrary characteristic and $\A$ denotes a fixed but arbitrary finite-dimensional $\k$-algebra.  We denote by $\hat{\Ca}$ the category of complete local commutative Noetherian $\k$-algebras with residue field $\k$. In particular, the morphisms in $\hat{\Ca}$ are continuous $\k$-algebra homomorphisms which induce the identity on $\k$. We denote by $\Ca$ the full subcategory of Artinian objects in $\hat{\Ca}$. For every Noetherian object $R$ in $\hat{\Ca}$, we denote by $R\A$ the tensor product of $\k$-algebras $R\otimes_\k\A$,  and by $R\A\textup{-mod}$ we denote the abelian category of all finitely generated left $R\A$-modules. We denote by $R\A$-proj the full subcategory of $R\A$-mod whose objects are finitely generated projective $R\A$-modules. 

Let $R$ be a fixed Noetherian object in $\hat{\Ca}$. Following \cite{brummer, gabriel1, gabriel2}, an $R\A$-module $M$ is said to be {\it pseudocompact} if it is the inverse limit of $R\A$-modules of finite length having the discrete topology. In particular, every finitely generated $R\A$-module is pseudocompact. It follows from \cite{gabriel1,gabriel2} that the category of pseudocompact left $R\A$-modules is an abelian category. Let $\mathcal{C}^-(R\A)$ denote the abelian category of bounded above complexes of pseudocompact $R\A$-modules, let $\mathcal{K}^-(R\A)$ be the corresponding homotopy category, and let $\mathcal{D}^-(R\A)$ be the corresponding derived category. In particular, the bounded derived category of finitely generated left $R\A$-modules $\mathcal{D}^b(R\A\textup{-mod})$ is a full triangulated subcategory of $\mathcal{D}^-(R\A)$. If $M^\bullet$ is an object in $\mathcal{C}^-(R\A)$, then we assume that $M^\bullet$ is of the form 
\begin{equation*}
M^\bullet: \cdots\to M^n\xrightarrow{\delta_M^n}M^{n+1}\xrightarrow{\delta_M^{n+1}}M^{n+2}\to \cdots\to M^{n+m-1}\xrightarrow{\delta^{n+m-1}_M} M^{n+m}\to 0\to \cdots,
\end{equation*} 
and that $\delta_M^{i+1}\circ \delta_M^i=0$ for $n\leq i\leq n+m-1$. We denote by $T$ the shifting functor on $\mathcal{C}^-(R\A)$, $\mathcal{K}^-(R\A)$ and $\mathcal{D}^-(R\A)$, i.e. $T$ shifts complexes to the left and changes the sign of the differentials. If $M$ is a pseudocompact $R\A$-module, then $M$ defines an object in $\mathcal{D}^-(R\A)$ by considering it as a stalk complex concentrated in degree zero. For all morphisms $f:M^\bullet\to M'^\bullet$ in $\mathcal{C}^-(R\A)$, we denote by $C^\bullet_f$ the mapping cone of $f$ in $\mathcal{K}^-(R\A)$ (see e.g. \cite[\S I. 2]{hartshorne}), and denote also by $f$ the corresponding chain homotopy class in $\mathcal{K}^-(R\A)$.

We denote by $\mathcal{K}^b(R\A\textup{-proj})$ the full triangulated subcategory of $\mathcal{D}^-(R\A)$ consisting in those objects that are bounded complexes of finitely generated projective $R\A$-modules. We say that an object $P^\bullet$ of $\mathcal{D}^-(R\A)$ is a {\it perfect complex over $R\A$} if it is isomorphic to an object of $\mathcal{K}^b(R\A\textup{-proj})$. By \cite[Lemma 1.2.1]{buchweitz}, $\mathcal{K}^b(R\A\textup{-proj})$ is a thick (or \'epaisse) subcategory of $\mathcal{D}^b(R\A\textup{-mod})$ in the sense of \cite[\S I.2.1.1]{verdier}. Since by \cite[Prop. 1.3]{rickard1}, $\mathcal{D}^b(R\A\textup{-mod})$ is a thick subcategory of $\mathcal{D}^-(R\A)$, it follows from \cite[Cor. 4.3]{verdier} that $\mathcal{K}^b(R\A\textup{-proj})$ is also a thick subcategory of $\mathcal{D}^-(R\A)$.  We denote by $\mathcal{D}^-_{\mathfrak{perf}}(R\A)$ the Verdier quotient 
\begin{equation*}
\mathcal{D}^-_{\mathfrak{perf}}(R\A)=\mathcal{D}^-(R\A)/\mathcal{K}^b(R\A\textup{-proj}).
\end{equation*}
More precisely, $\mathcal{D}^-_{\mathfrak{perf}}(R\A)$ is the localization $\mathcal{D}^-(R\A)[\Sigma(\mathcal{K}^b(R\A\textup{-proj}))^{-1}]$ of $\mathcal{D}^-(R\A)$ by the multiplicative system $\Sigma(\mathcal{K}^b(R\A\textup{-proj}))$ consisting in those morphisms $X^\bullet \to Y^\bullet$ in $\mathcal{D}^-(R\A)$ that fit into an exact triangle $X^\bullet\to Y^\bullet \to Z^\bullet \to TX^\bullet$  with $Z^\bullet$ isomorphic to a perfect complex over $R\A$. In particular, a morphism $u:M^\bullet\to M'^\bullet$ in $\mathcal{C}^-(R\A)$ belongs to $\Sigma(\mathcal{K}^b(R\A\textup{-proj}))$ if and only if $C^\bullet_u$ is isomorphic to an object of $\mathcal{K}^b(R\A\textup{-proj})$. A morphism $\hat{f}: M^\bullet \to M'^\bullet$ in $\mathcal{D}^-_{\mathfrak{perf}}(R\A)$ can be represented by a diagram of the form
\begin{equation*}
\begindc{\commdiag}[\Size]
\obj(1,-1)[p0]{$M''^\bullet$}
\obj(0,0)[p1]{$M^\bullet$}
\obj(2,0)[p2]{$M'^\bullet$}
\mor{p1}{p0}{$f$}[-1,0]
\mor{p2}{p0}{$u$}
\enddc
\end{equation*} 
where $f$ is a morphism in $\mathcal{D}^-(R\A)$ and $u$ is a morphism in $\Sigma(\mathcal{K}^b(R\A\textup{-proj}))$.  On the other hand, a morphism $f^\bullet: M^\bullet \to M'^\bullet$ in $\mathcal{D}^-(R\A)$ becomes zero in $\mathcal{D}^-_{\mathfrak{perf}}(R\A)$ if and only if there exists a morphism $u$ in $\Sigma(\mathcal{K}^b(R\A\textup{-proj}))$ such that $u\circ f=0$ if and only if there exists a morphism $v$ in $\Sigma(\mathcal{K}^b(R\A\textup{-proj}))$ such that $f\circ v=0$ if and only if $f$ factors through some object isomorphic to a perfect complex over $R\A$ (see  e.g. \cite[\S 3 \& \S 4]{krause3} for more details). Following \cite{orlov1,orlov2}, we denote by  $\mathcal{D}_\textup{sg}(R\A\textup{-mod})$ the singularity category of $R\A$, i.e., the Verdier quotient
\begin{equation*}
\mathcal{D}_\textup{sg}(R\A\textup{-mod})=\mathcal{D}^b(R\A\textup{-mod})/\mathcal{K}^b(R\A\textup{-proj}).
\end{equation*}
Note in particular that by \cite[Cor. 4.3]{verdier}, $\mathcal{D}_\textup{sg}(R\A\textup{-mod})$ is a thick subcategory of $\mathcal{D}^-_{\mathfrak{perf}}(R\A)$.

Following \cite[Def. 5.2]{illusie1} (see also \cite[\S 8.3.6]{illusie2}), we say that an object $M^\bullet$ in $\mathcal{K}^-(R\A)$ has {\it finite pseudocompact $R$-tor dimension}, provided that there exists an integer $N$ such that for all pseudocompact $R$-modules $S$, and for all integers $i<N$ we have $\mathrm{H}^i(S\hat{\otimes}_R^\mathbf{L}M^\bullet)=0$, where $\hat{\otimes}_R$ denotes the completed tensor product in the category of pseudocompact $R$-modules. Note in particular that if $M$ is a finitely generated as pseudocompact $R$-module, then the functors $M\otimes_R-$ and $M\hat{\otimes}_R-$ are naturally isomorphic. From now on we use $\hat{\otimes}_R$ to denote both $\otimes_R$ and $\hat{\otimes}_R$ accordingly. If $\pi:R\to R'$ is a morphism of Noetherian objects in $\hat{\Ca}$, then $\pi$ induces a morphism of derived categories $R'\hat{\otimes}^\mathbf{L}_{R,\pi}-:\mathcal{D}^-(R\A)\to \mathcal{D}^-(R'\A)$. In particular, for all objects $M^\bullet$ in $\mathcal{D}^-(R\A)$, $R'\hat{\otimes}^\mathbf{L}_{R,\pi}TM^\bullet=T(R'\hat{\otimes}^\mathbf{L}_{R,\pi}M^\bullet)$. 

\begin{remark}\label{remark:1.1}
Let $R$ be an Artinian object in $\Ca$ and suppose that $M^\bullet$ is a complex in $\mathcal{K}^-(R\A)$ that has abstractly free pseudocompact $R$-modules and has finite pseudocompact $R$-tor dimension. By the arguments in \cite[Remark 2.1.6]{blehervelez2} (see also \cite[pg. 263]{milne}), we can truncate $M^\bullet$ to obtain a bounded complex  $M'^\bullet$ whose terms are  abstractly free pseudocompact $R$-modules and such that $M^\bullet$ is quasi-isomorphic to $M'^\bullet$ in $\mathcal{C}^-(R\A)$.
\end{remark}
The following lemma is a direct consequence of a result due to M. Hashimoto (see \cite[Thm. 1]{hashimoto}).

\begin{lemma}\label{lemma:1.0}
Let $R$ be a Noetherian object in $\hat{\Ca}$ and let $M^\bullet$ be a complex in $\mathcal{C}^-(R\A)$ whose terms are finitely generated flat $R$-modules. If $\k\hat{\otimes}_RM^\bullet$ is acyclic, then $M^\bullet$ is also acyclic. 
\end{lemma}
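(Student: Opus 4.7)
The plan is to reduce the claim to Nakayama's lemma via a descending-induction argument on cohomological degree. First, I would observe that since $R$ is a Noetherian local ring and each $M^i$ is a finitely generated flat $R$-module, $M^i$ is in fact finitely generated and free over $R$; in particular, every cohomology module $\H^i(M^\bullet)$ is a finitely generated $R$-module, which is exactly what will later allow Nakayama's lemma to bite.

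Next, since each $M^i$ is $R$-flat, the ordinary tensor product $\k\hat{\otimes}_R M^\bullet$ computes the derived tensor product $\k\hat{\otimes}_R^{\mathbf{L}} M^\bullet$, and one has the standard hypercohomology spectral sequence
$$E_2^{p,q}=\mathrm{Tor}_{-p}^R(\k,\H^q(M^\bullet))\;\Longrightarrow\;\H^{p+q}(\k\hat{\otimes}_R M^\bullet),$$
which converges because $M^\bullet$ is bounded above. Arguing by contradiction, suppose $M^\bullet$ is not acyclic and let $i_0$ be the largest integer with $\H^{i_0}(M^\bullet)\neq 0$; such an $i_0$ exists precisely by the boundedness hypothesis. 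For $r\geq 2$, every outgoing differential from $E_r^{0,i_0}$ lands in $E_r^{r,i_0-r+1}$, a subquotient of $\mathrm{Tor}_{-r}^R(\k,\H^{i_0-r+1}(M^\bullet))=0$ (negative Tor index), while every incoming differential to $E_r^{0,i_0}$ emanates from $E_r^{-r,i_0+r-1}$, a subquotient of $\mathrm{Tor}_{r}^R(\k,\H^{i_0+r-1}(M^\bullet))=0$ by the choice of $i_0$. Hence $E_2^{0,i_0}=\k\otimes_R\H^{i_0}(M^\bullet)$ survives to $E_\infty^{0,i_0}$, and the acyclicity of $\k\hat{\otimes}_R M^\bullet$ forces $\k\otimes_R\H^{i_0}(M^\bullet)=0$; Nakayama's lemma then yields $\H^{i_0}(M^\bullet)=0$, contradicting the choice of $i_0$.

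The main subtlety I anticipate is verifying that the hypercohomology spectral sequence is properly available in the pseudocompact setting with $\hat{\otimes}_R$ in place of $\otimes_R$, and that boundedness above suffices for its strong convergence at the relevant bidegree; the former reduces to the fact recorded in the excerpt that the two tensor products agree on finitely generated $R$-modules, and the latter is a routine bookkeeping matter since $M^\bullet$ has no terms in sufficiently large degrees. Alternatively, as the excerpt itself indicates, this entire argument can be replaced by a direct appeal to \cite[Thm.~1]{hashimoto}, of which the sketch above is a special case adapted to the pseudocompact context.
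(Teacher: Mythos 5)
Your argument is correct, but it takes a different route from the paper: the paper offers no argument at all for this lemma, deducing it instead as a direct consequence of the cited theorem of Hashimoto on acyclicity of complexes of flat modules (this is why the lemma is stated for flat, not free, terms). Your proposal replaces that citation with a self-contained ``derived Nakayama'' argument: finitely generated flat modules over the Noetherian local ring $R$ are free, the cohomology modules $\H^q(M^\bullet)$ are finitely generated, and the hyper-Tor spectral sequence $E_2^{p,q}=\mathrm{Tor}_{-p}^R(\k,\H^q(M^\bullet))\Rightarrow \H^{p+q}(\k\hat{\otimes}_RM^\bullet)$ isolates the top nonvanishing cohomology, where Nakayama's lemma applies. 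The convergence worry you flag is genuinely harmless here: a projective resolution of $\k$ lives in degrees $\leq 0$ and $M^\bullet$ is bounded above, so the double complex is bounded above in both indices and the column filtration is finite in each total degree, giving strong convergence. (You could even bypass the spectral sequence entirely: with $i_0$ the top degree with $\H^{i_0}(M^\bullet)\neq 0$, right $t$-exactness of $\k\hat{\otimes}_R^{\mathbf{L}}-$ gives $\H^{i_0}(\k\hat{\otimes}_R^{\mathbf{L}}M^\bullet)\cong \k\otimes_R\H^{i_0}(M^\bullet)$ directly, and flatness of the terms identifies $\k\hat{\otimes}_RM^\bullet$ with the derived tensor product.) What each approach buys: yours makes the lemma elementary and independent of the external reference, at the cost of the local hypotheses (Noetherian local, finitely generated terms) that make Nakayama available; the paper's citation is shorter and rests on a statement valid in greater generality for complexes of flat modules, but leaves the verification to the reader.
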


We obtain the following result, which relates surjective morphisms of Artinian objects in $\Ca$ with perfect and acyclic complexes.

\begin{lemma}\label{lemma:1.1}
Let $\pi: R\to R_0$ be a surjective morphism of Artinian objects in $\Ca$. 
\begin{enumerate}
\item If $P_0^\bullet$ is an object in $\mathcal{K}^b(R_0\A\textup{-proj})$, then there exists an object $P^\bullet$ in $\mathcal{K}^b(R\A\textup{-proj})$ such that $R_0\hat{\otimes}_{R,\pi}P^\bullet$ is isomorphic to $P_0^\bullet$ in $\mathcal{C}^-(R_0\A)$. 
\item Let $f:M^\bullet \to M'^\bullet$ be a morphism in $\mathcal{C}^-(R\A)$ where the terms of both $M^\bullet$ and $M'^\bullet$ are abstractly free finitely generated $R\A$-modules. If the induced morphism $\k\hat{\otimes}_Rf: \k\hat{\otimes}_RM^\bullet \to \k\hat{\otimes}_RM'^\bullet$ is a quasi-isomorphism in $\mathcal{C}^-(\A)$, then $f$ is also a quasi-isomorphism in $\mathcal{C}^-(R\A)$.
\item Let $u: M^\bullet\to M''^\bullet$, $v: M^\bullet \to M'^\bullet$ and $f: M'^\bullet \to M''^\bullet$ be morphisms in $\mathcal{C}^-(R\A)$ such that $f\circ v=u$. If the mapping cones $C_u^\bullet$ and $C_v^\bullet$ of $u$ and $v$, respectively, are both perfect complexes over $R\A$, then so is $C_f^\bullet$ the mapping cone of $f$.  
\end{enumerate}
\end{lemma}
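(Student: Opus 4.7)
My plan is to prove the three parts of Lemma \ref{lemma:1.1} in order, using qualitatively different tools for each. For part (i), the key feature is that $I:=\ker\pi$ is nilpotent because $R$ is Artinian local. I would first lift each term $P_0^i$ of $P_0^\bullet$ to a finitely generated projective $R\A$-module $P^i$ with $R_0\hat\otimes_{R,\pi}P^i\cong P_0^i$; this is possible because idempotents lift along the surjection $R\A\twoheadrightarrow R_0\A$ whose kernel $I\A$ is nilpotent. The main obstacle is then to produce differentials $d^i:P^i\to P^{i+1}$ that reduce modulo $I$ to $d_0^i$ and satisfy $d^{i+1}\circ d^i=0$ on the nose. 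I would handle this by factoring $\pi$ through successive square-zero extensions to reduce to the case $I^2=0$, and then by induction on the number of nonzero terms of $P_0^\bullet$: an arbitrary projective lift $\tilde d^i$ of $d_0^i$ exists by projectivity of $P^i$, and its square $\tilde d^{i+1}\tilde d^i$ automatically lands in $IP^{i+2}$; a standard correction argument then exploits the square-zero hypothesis and the projectivity of the $P^i$ to modify the $\tilde d^i$ by maps into $IP^{i+1}$ so that $d^2=0$ holds.

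Part (ii) is essentially a direct application of Lemma \ref{lemma:1.0}. The mapping cone $C_f^\bullet$ of $f:M^\bullet\to M'^\bullet$ has finitely generated free $R\A$-modules, hence finitely generated free (in particular flat) $R$-modules, in each degree, and $\k\hat\otimes_R C_f^\bullet$ is canonically isomorphic to $C^\bullet_{\k\hat\otimes_R f}$ because the mapping cone functor commutes with the exact functor $\k\hat\otimes_R-$. By hypothesis $\k\hat\otimes_R f$ is a quasi-isomorphism in $\mathcal{C}^-(\A)$, so this common mapping cone is acyclic; Lemma \ref{lemma:1.0} then forces $C_f^\bullet$ itself to be acyclic in $\mathcal{C}^-(R\A)$, which is equivalent to $f$ being a quasi-isomorphism.

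Part (iii) is a formal consequence of the octahedral axiom in $\mathcal{D}^-(R\A)$: applied to the factorization $u=f\circ v$, it produces an exact triangle $C_v^\bullet\to C_u^\bullet\to C_f^\bullet\to TC_v^\bullet$. Since $\mathcal{K}^b(R\A\textup{-proj})$ is a thick subcategory of $\mathcal{D}^-(R\A)$ (as noted in the excerpt just before the statement), and since two of the three vertices of this triangle are perfect complexes over $R\A$ by hypothesis, the third vertex $C_f^\bullet$ is also isomorphic in $\mathcal{D}^-(R\A)$ to an object of $\mathcal{K}^b(R\A\textup{-proj})$. The only delicate step is the $d^2=0$ correction in part (i); parts (ii) and (iii) are essentially immediate consequences of the tools already developed in the excerpt.
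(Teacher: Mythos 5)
Your parts (ii) and (iii) are correct and are essentially the paper's own arguments: for (ii) one identifies $\k\hat{\otimes}_R C_f^\bullet$ with $C^\bullet_{\k\hat{\otimes}_Rf}$ and applies Lemma \ref{lemma:1.0}, and for (iii) the octahedral axiom produces a triangle with vertices $C_v^\bullet$, $C_u^\bullet$, $C_f^\bullet$ and one uses that the perfect complexes form a thick (triangulated) subcategory.

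The genuine gap is in your part (i), precisely at the step you yourself single out as delicate. After lifting the terms and choosing arbitrary lifts $\tilde d^i$ of the differentials over a square-zero extension with kernel $I$, the composites $\tilde d^{i+1}\circ\tilde d^i$ land in $IP^{i+2}$ and assemble into a degree-two cycle $\bar c$ of the Hom-complex $\Hom^\bullet_{R_0\A}(P_0^\bullet, I\otimes_{R_0}P_0^\bullet)$; modifying the $\tilde d^i$ by maps $h^i\colon P^i\to IP^{i+1}$ changes $\bar c$ exactly by the coboundary $d_0h+hd_0$. So the ``standard correction argument'' you invoke exists if and only if the class of $\bar c$ in $\Hom_{\mathcal{K}^-(R_0\A)}(P_0^\bullet, T^2(I\otimes_{R_0}P_0^\bullet))$ vanishes, and projectivity of the terms gives no control over this obstruction group. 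It does not vanish in general. Concretely, take $\A=\k$, $R=\k[x]/(x^3)$, $R_0=\k[x]/(x^2)$ (even a small extension) and $P_0^\bullet\colon R_0\xrightarrow{x}R_0\xrightarrow{x}R_0$. Any bounded complex of finitely generated free $R$-modules whose reduction is isomorphic to $P_0^\bullet$ in $\mathcal{C}^-(R_0)$ must be of the form $R\xrightarrow{a}R\xrightarrow{b}R$ with $a,b\in (x)\setminus(x^2)$, and then $ba$ is a nonzero scalar multiple of $x^2$ in $R$, so no lift exists; splitting off contractible summands (minimal complexes) shows the same failure even up to homotopy equivalence. In particular your induction on the number of nonzero terms also breaks, since its inductive step is exactly this correction.

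For comparison, the paper's proof of (i) follows the same strategy (lift the terms, lift the differentials by projectivity) and then disposes of the crucial point by asserting $\delta_P^{-j+1}\circ\delta_P^{-j}=0$ via Nakayama's lemma; but the reduction argument only shows that this composite has image in $\m_R P^{-j+2}$, so it does not address the obstruction either. Your instinct that a genuine correction step is needed here is sound; the example above shows, however, that it is a real obstruction rather than a routine patch, and that statement (i) in its stated generality (an arbitrary object $P_0^\bullet$ of $\mathcal{K}^b(R_0\A\textup{-proj})$, with reduction isomorphic to $P_0^\bullet$ even just up to homotopy) needs additional hypotheses or a weaker formulation before any proof along these lines can succeed.
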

\begin{proof}
(i). We can assume without loosing generality that $P_0^\bullet$ is of the form 
\begin{equation*}
P_0^\bullet: 0\to P_0^{-m}\xrightarrow{\delta_{P_0}^{-m}}P_0^{-m+1}\xrightarrow{\delta_{P_0}^{-m+1}}P_0^{-m+2}\to \cdots\to P_0^{-1}\xrightarrow{\delta_{P_0}^{-1}} P_0^0\to 0,
\end{equation*}
where for all $0\leq j\leq m$, $P_0^{-j}$ is a finitely generated projective $R_0\A$-module. For each $P_0^{-j}$, there exists a finitely generated projective $R\A$-module $P^{-j}$ and a surjective morphism of $R\A$-modules $p^{-j}: P^{-j}\to P_0^{-j}$ such that $R_0\hat{\otimes}_{R, \pi} p^{-j}: R_0\hat{\otimes}_{R,\pi}P^{-j}\to P_0^{-j}$ is an isomorphism of finitely generated projective $R_0\A$-modules. Moreover, for all $0\leq j \leq m$, there exists a morphism $\delta_{P}^{-j}: P^{-j}\to P^{-j+1}$ with $\delta_P^0=0$ such that $p^{-j+1}\circ \delta_P^{-j}=\delta_{P_0}^{-j}\circ p^{-j}$ as $R\A$-module homomorphisms.  Therefore 
\begin{align*}
\k\hat{\otimes}_R(\delta_P^{-j+1}\circ \delta_P^{-j})&=\k\hat{\otimes}_{R_0}(R_0\hat{\otimes}_{R,\pi}(\delta_P^{-j+1}\circ \delta_P^{-j}))=\k\hat{\otimes}_{R_0}(\delta_{P_0}^{-j+1}\circ \delta_{P_0}^{-j})=0.
%&=\k\hat{\otimes}_{R_0}(R_0\hat{\otimes}_{R,\pi}\delta_P^{-j+1}\circ R_0\hat{\otimes}_{R,\pi}\delta_P^{-j})\\
\end{align*}
It follows by Nakayama's Lemma that $\delta_P^{-j+1}\circ\delta_P^j=0$ for all $0\leq j\leq m$. Thus we obtain a perfect complex over $R\A$, namely  
\begin{equation*}
P^\bullet: 0\to P^{-m}\xrightarrow{\delta_{P}^{-m}}P^{-m+1}\xrightarrow{\delta_P^{-m+1}}P^{-m+2}\to \cdots\to P^{-1}\xrightarrow{\delta_P^{-1}} P^0\to 0,
\end{equation*}
such that $R_0\otimes_{R,\pi}P^\bullet$ is isomorphic to $P_0^\bullet$ in $\mathcal{K}^b(R_0\A\textup{-mod})$.

(ii). Let $C_f^\bullet$ be the mapping cone of $f$ in $\mathcal{K}^-(R\A)$. After tensoring the corresponding distinguished triangle $M^\bullet\xrightarrow{f} M'^\bullet \to C_f^\bullet \to TM^\bullet$ in $\mathcal{K}^-(R\A)$ with $\k$ over $R$ and using  the axiom of triangulated categories (TR2) (see e.g. \cite[\S I.1]{hartshorne}) together with \cite[\S Prop. I.1.1 (c)]{hartshorne}, we obtain an isomorphism $h: \k\hat{\otimes}_{R,\pi} C_f^\bullet\to C_{\k\hat{\otimes}_Rf}^\bullet$ such that the following diagram of triangles in $\mathcal{K}^-(\A)$ is commutative: 
\begin{equation*}
\begindc{\commdiag}[\Sized]
\obj(0,2)[p0]{$\k\hat{\otimes}_RM^\bullet$}
\obj(4,2)[p1]{$\k\hat{\otimes}_RM'^\bullet$}
\obj(8,2)[p2]{$\k\hat{\otimes}_RC_f^\bullet$}
\obj(12,2)[p3]{$\k\hat{\otimes}_RTM^\bullet$}
\obj(0,0)[q0]{$\k\hat{\otimes}_RM^\bullet$}
\obj(4,0)[q1]{$\k\hat{\otimes}_RM'^\bullet$}
\obj(8,0)[q2]{$C_{\k\hat{\otimes}_Rf}^\bullet$}
\obj(12,0)[q3]{$T(\k\hat{\otimes}_RM^\bullet)$}
\mor{p0}{p1}{$\k\hat{\otimes}_Rf$}
\mor{p1}{p2}{}
\mor{p2}{p3}{}
\mor{q0}{q1}{$\k\hat{\otimes}_Rf$}
\mor{q1}{q2}{}
\mor{q2}{q3}{}
\mor{p0}{q0}{$=$}
\mor{p1}{q1}{$=$}
\mor{p2}{q2}{$h$}
\mor{p3}{q3}{$=$}
\enddc
\end{equation*}   
Since $\k\hat{\otimes}_R f$ is a quasi-isomorphism in $\mathcal{C}^-(\A)$, it follows from \cite[Cor. 1.5.4]{weibel} that $C_{\k\hat{\otimes}_Rf}^\bullet$ is acyclic, which implies that $\k\hat{\otimes}_RC_f^\bullet$ is also acyclic. Note in particular that the terms of $C_f^\bullet$ are free (so flat) as $R$-modules. By Lemma \ref{lemma:1.0}, it follows that $C_f^\bullet$ is acyclic and consequently $f$ is a quasi-isomorphism in $\mathcal{C}^-(R\A)$.

(iii). After using  the Octahedral Axiom (TR4) of triangulated categories (see e.g. \cite[\S I.1]{hartshorne} or \cite[\S 4.1]{krause3}), we obtain the following commutative diagram of triangles in $\mathcal{K}^-(R\A)$:
\begin{equation*}
\begindc{\commdiag}[\Sized]
\obj(0,2)[p0]{$M^\bullet$}
\obj(4,2)[p1]{$M'^\bullet$}
\obj(8,2)[p2]{$C_v^\bullet$}
\obj(12,2)[p3]{$TM^\bullet$}
\obj(0,0)[q0]{$M^\bullet$}
\obj(4,0)[q1]{$M''^\bullet$}
\obj(8,0)[q2]{$C_u^\bullet$}
\obj(12,0)[q3]{$TM^\bullet$}
\obj(4,-2)[r1]{$C_f^\bullet$}
\obj(8,-2)[r2]{$C_f^\bullet$}
\obj(12,-2)[r3]{$TM'^\bullet$}
\obj(4,-4)[s1]{$T M'^\bullet$}
\obj(8,-4)[s2]{$TC_v^\bullet$}
\mor{p0}{p1}{$v$}
\mor{p1}{p2}{}
\mor{p2}{p3}{}
\mor{q0}{q1}{$u$}
\mor{q1}{q2}{}
\mor{q2}{q3}{}
\mor{p0}{q0}{$=$}
\mor{p1}{q1}{$f$}
\mor{p2}{q2}{$h$}
\mor{p3}{q3}{$=$}
\mor{q1}{r1}{}
\mor{r1}{s1}{}
\mor{r1}{r2}{$=$}
\mor{r2}{r3}{}
\mor{s1}{s2}{}
\mor{q2}{r2}{}
\mor{r2}{s2}{}
\mor{q3}{r3}{$Tv$}
\enddc
\end{equation*}   
Since the category of perfect complexes over $R\A$ is a full triangulated subcategory of $\mathcal{K}^-(R\A)$ , it follows that $C_f^\bullet$ is also a perfect complex over $R\A$. This finishes the proof of Lemma \ref{lemma:1.1}.
\end{proof}

Following \cite{enochs0,enochs}, we say that a (not necessarily finitely generated) left $\A$-module $M$ is {\it Gorenstein projective} provided that there exists an exact sequence of (not necessarily finitely generated) projective left $\A$-modules 
\begin{equation*}
\cdots\to P^{-2}\xrightarrow{f^{-2}} P^{-1}\xrightarrow{f^{-1}} P^0\xrightarrow{f^0}P^1\xrightarrow{f^1}P^2\to\cdots
\end{equation*}  
such that $M=\mathrm{coker}\,f^0$, and for all integers $i>0$ and $j\in \Z$ we have $\Ext_\A^i(\ker\,f^j, \A)=0$. 
Following \cite{auslander4} and \cite{avramov2}, we say that a finitely generated left $\A$-module $M$ is of {\it Gorenstein dimension zero} or {\it totally reflexive} provided that the left $\A$-modules $M$ and $\Hom_\A(\Hom_\A(M,\A),\A)$ are isomorphic, and that $\Ext_\A^i(M,\A)=0=\Ext_\A^i(\Hom_\A(M,\A),\A)$ for all $i>0$.  It is well-known that finitely generated Gorenstein projective left $\A$-modules coincided with those that are totally reflexive (see e.g. \cite[Lemma 2.1.4]{chenxw4}). We denote by $\A\textup{-GProj}$ the category of Gorenstein projective left $\A$-modules.
%, and by $\A\textup{-PCGProj}$ the category of all pseudocompact Gorenstein projective left $\A$-modules. 

Note that if $V$ is in $\A\textup{-GProj}$ and $P$ is a finitely generated projective $\A$-module, then $\Ext_{\A}^i(V,P)=0$ for all $i>0$. 

\begin{lemma}\label{lemma:1.3}
Let $P^\bullet$ be an arbitrary object in $\mathcal{K}^b(\A\textup{-proj})$.
 \begin{enumerate}
\item If $V\in \A\textup{-GProj}$, then $\Ext_{\mathcal{D}^-(\A)}^i(V, P^\bullet)=\Hom_{\mathcal{D}^-(\A)}(V, T^iP^\bullet)=0$ for all $i>0$.
\item If $V^\bullet$ is a {\bf bounded} complex in $\mathcal{C}^-(\A)$ with all of its terms in $\A\textup{-GProj}$, then for all $i>0$
\begin{equation*}
\Ext_{\mathcal{D}^-(\A)}^i(V^\bullet, P^\bullet)=\Hom_{\mathcal{D}^-(\A)}(V^\bullet, T^iP^\bullet)=0.
\end{equation*}
\end{enumerate}
\end{lemma}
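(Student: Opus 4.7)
The plan is a double induction: prove part (i) by induction on the length of the bounded complex $P^\bullet$, and then part (ii) by induction on the length of $V^\bullet$, using part (i) as the base-case input.

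For part (i), write $P^\bullet$ as $0 \to P^a \to P^{a+1} \to \cdots \to P^b \to 0$ and induct on $b - a$. In the base case $b = a$, $P^\bullet$ is a stalk complex consisting of a single projective module $P$; a direct chain-map computation through a projective resolution $Q^\bullet \to V$ (which is admissible since $Q^\bullet$ is $K$-projective as a bounded-above complex of projectives) identifies $\Hom_{\mathcal{D}^-(\A)}(V, T^i P^\bullet)$ with an ordinary Ext group $\Ext^{k}_\A(V, P)$ in a positive degree $k$, and this vanishes by the observation immediately preceding the lemma ($V \in \A\textup{-GProj}$ and $P$ projective force $\Ext^{j}_\A(V, P) = 0$ for all $j > 0$). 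For the inductive step, let $P'^\bullet$ be the stupid truncation of $P^\bullet$ obtained by replacing the leftmost term $P^a$ by zero; there is a short exact sequence of complexes
\[
0 \longrightarrow P'^\bullet \longrightarrow P^\bullet \longrightarrow P^a[-a] \longrightarrow 0,
\]
which is termwise split since every entry is projective and therefore yields a distinguished triangle in $\mathcal{D}^-(\A)$. Applying the cohomological functor $\Hom_{\mathcal{D}^-(\A)}(V, T^i(-))$ and taking the associated long exact sequence, the two outer terms vanish for $i > 0$ by the base case and the inductive hypothesis, which forces $\Hom_{\mathcal{D}^-(\A)}(V, T^i P^\bullet) = 0$.

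For part (ii), carry out the analogous induction on the length of $V^\bullet$: in the base case $V^\bullet$ is a stalk Gorenstein projective module and the vanishing follows immediately from part (i); in the inductive step, form the stupid-truncation distinguished triangle of $V^\bullet$ by splitting off its leftmost GP term, apply $\Hom_{\mathcal{D}^-(\A)}(-, T^i P^\bullet)$, and conclude from the resulting long exact sequence using the inductive hypothesis together with the base case. The main obstacle is the bookkeeping in the base case of (i), in particular making sure that the shift indices align so that the ordinary $\Ext^{k}_\A(V, P)$ group obtained really sits in a positive degree $k > 0$ whenever the vanishing is invoked; once this is secured, the inductive propagation through the truncation triangles and the long exact sequences of $\Hom_{\mathcal{D}^-(\A)}$ is routine.
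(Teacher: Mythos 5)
Your proposal follows essentially the same route as the paper: induction on the length of $P^\bullet$ via the brutal-truncation triangle that splits off the outermost projective term (the paper phrases this through the mapping cone of $T^{m-1}P^{-m}\to P'^\bullet$, which is the same triangle your termwise-split short exact sequence produces), with the base case reduced to $\Ext^j_\A(V,P)=0$ for $j>0$, and then the mirror-image induction on the length of $V^\bullet$ for part (ii). The one point you leave open -- the ``bookkeeping'' that the ordinary Ext groups land in strictly positive degrees -- is exactly where a hypothesis is needed: with your arbitrary range $a\le b$ the claim is false when the complex protrudes into positive degrees (a projective stalk in degree $1$ gives $\Hom_{\mathcal{D}^-(\A)}(V,T^1P^\bullet)=\Hom_\A(V,P)\neq 0$ in general). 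The paper secures this by normalizing $P^\bullet$ to sit in degrees $-m,\dots,0$ and $V^\bullet$ in degrees $-n,\dots,0$, so every Ext-degree that appears in the long exact sequences is $\ge i>0$; you should state this normalization explicitly rather than defer it, after which your argument is complete and coincides with the paper's.
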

\begin{proof}
(i). Assume without loosing generality that $P^\bullet$ is of the form:
\begin{equation*}
P^\bullet: 0\to P^{-m}\xrightarrow{\delta_{P}^{-m}}P^{-m+1}\xrightarrow{\delta_P^{-m+1}}P^{-m+2}\to \cdots\to P^{-1}\xrightarrow{\delta_P^{-1}} P^0\to 0, 
\end{equation*}
where for all $0\leq j\leq m$, $P^{-j}$ is a finitely generated projective $\A$-module.
Since $V\in \A\textup{-GProj}$, Lemma \ref{lemma:1.3} (i) follows for the case $m=0$. Assume the statement of Lemma \ref{lemma:1.3} (i) true for all perfect complexes $Q^\bullet$ over $\A$ whose length is smaller than $m$.  Let $P'^\bullet$ be the associated truncated complex to $P^\bullet$ (i.e. $P'^{-j}=0$ for $j\geq m$, $P'^{-j}=P^{-j}$ and $\delta_{P'}^{-j} = \delta_{P}^{-j}$ for $0\leq j\leq m-1$). Then $\delta_P^{-m}$  induces a morphism from $T^{m-1} P^{-m}$ to $P'^\bullet$ whose mapping cone is $P^\bullet$. Therefore, we obtain a distinguished triangle in $\mathcal{K}^b(\A\textup{-proj})$:

\begin{equation}\label{triagproj1}
T^{m-1}P^{-m}\to P'^\bullet\to P^\bullet \to T^mP^{-m}.
\end{equation}

By applying $\Hom_{\mathcal{D}^-(\A)}(V, -)$ to the triangle (\ref{triagproj1}) and using \cite[Prop. I.1.1 (b) \& \S I.6]{hartshorne}, we obtain a long exact sequence

\begin{align*}\label{longexactext1}
\cdots\to \Ext^{i+m-1}(V, P^{-m})\to\Ext^i(V, P'^\bullet)\to\Ext^i(V, P^\bullet)\to\Ext^{i+m}(V, P^{-m})\to \cdots,  
\end{align*} 
where $i>0$ and $\Ext^i=\Ext_{\mathcal{D}^-(\A)}^i$. Note that since $P^{-m}$ is a finitely generated projective $\A$-module and  $P'^\bullet$ is a perfect complex over $\A$ of length smaller than $m$, we obtain by induction that for all $i>0$, $\Ext_{\mathcal{D}^-(\A)}^{i+m}(V, P^{-m})=\Ext_\A^{i+m}(V, P^{-m})=0=\Ext_{\mathcal{D}^-(\A)}^i(V, P'^\bullet)$, which implies $\Ext_{\mathcal{D}^-(\A)}^i(V, P^\bullet)=0$.  

(ii). We can assume without losing generality that $V^\bullet$ is of the form
\begin{align*}
V^\bullet&: 0\to V^{-n}\xrightarrow{\delta_V^{-n}}V^{-n+1}\xrightarrow{\delta_V^{-n+1}}V^{-n+2}\to \cdots\to V^{-1}\xrightarrow{\delta^{-1}_V} V^0\to 0.
\end{align*}
If $n=0$, then $V^\bullet$ is a Gorenstein projective left $\A$-module concentrated in degree zero. In this situation, (ii) follows from (i). Assume then the result true for all complexes $W^\bullet$ whose terms are all in $\A\textup{-GProj}$ and whose length is strictly less than $n$. As in the proof of (i), we obtain a distinguished triangle in $\mathcal{K}^-(\A)$:
\begin{equation}\label{triaggoren2}
T^{n-1}V^{-n}\to V'^\bullet\to V^\bullet \to T^nV^{-n}, 
\end{equation}
where $V'^\bullet$ is the associated truncated complex to $V^\bullet$. As before, by applying $\Hom_{\mathcal{D}^-(\A)}(-, P^\bullet)$ to (\ref{triaggoren2}) and using \cite[Prop. I.1.1 (b),\S I.6]{hartshorne}, for all $i>0$, we obtain a long exact sequence
\begin{align*}
\cdots\to \Ext^{i+n}(V^{-n}, P^\bullet)\to\Ext^i(V^\bullet, P'^\bullet)\to\Ext^i(V'^\bullet, P^\bullet)\to\Ext^{i+n-1}(V^{-n}, P^\bullet)\to \cdots. 
\end{align*} 
Note that since $V^{-n}$ and  $V'^\bullet$ are bounded objects in $\mathcal{C}^-(\A)$ whose terms are in $\A\textup{-GProj}$ and whose length are both smaller than $n$, we obtain by induction that  $\Ext_{\mathcal{D}^-(\A)}^{i+n}(V^{-n}, P^\bullet)=0=\Ext_{\mathcal{D}^-(\A)}^i(V'^\bullet, P^\bullet)$, which implies that $\Ext_{\mathcal{D}^-(\A)}^i(V^\bullet, P^\bullet)=0$ for all $i>0$. This finishes the proof of Lemma \ref{lemma:1.3}.
\end{proof}

Since every projective left $\A$-module is Gorenstein projective, we obtain the following direct consequence of Lemma \ref{lemma:1.3}. 

\begin{corollary}\label{cor:1.6}
If $P^\bullet$ and $Q^\bullet$ are both objects in $\mathcal{K}^b(\A\textup{-proj})$, then $\Ext_{\mathcal{D}^-(\A)}^i(P^\bullet, Q^\bullet)=0$ for all $i>0$. 
\end{corollary}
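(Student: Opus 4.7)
The plan is to observe that this corollary is essentially a trivial specialization of Lemma \ref{lemma:1.3}(ii). The key observation, which the paper emphasizes just before stating the corollary, is that every finitely generated projective left $\A$-module is Gorenstein projective: one can take the exact sequence of projectives defining the Gorenstein projective property to be the trivial one with $P^j = 0$ for $j \neq 0$, and the $\Ext^i_\A(\ker f^j, \A) = 0$ condition is vacuous (or follows from projectivity of the kernels).

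With that in hand, the proof is a one-line reduction. The complex $P^\bullet$ is, by assumption, a bounded complex in $\mathcal{C}^-(\A)$ all of whose terms are finitely generated projective $\A$-modules, hence all of whose terms lie in $\A\textup{-GProj}$. Meanwhile $Q^\bullet$ is an object of $\mathcal{K}^b(\A\textup{-proj})$. Thus the pair $(P^\bullet, Q^\bullet)$ satisfies the hypotheses of Lemma \ref{lemma:1.3}(ii) (with $V^\bullet = P^\bullet$ and the perfect complex $Q^\bullet$ playing the role called $P^\bullet$ in that lemma). Applying Lemma \ref{lemma:1.3}(ii) directly yields
\begin{equation*}
\Ext^i_{\mathcal{D}^-(\A)}(P^\bullet, Q^\bullet) = \Hom_{\mathcal{D}^-(\A)}(P^\bullet, T^i Q^\bullet) = 0 \quad \text{for all } i > 0,
\end{equation*}
which is exactly the statement of the corollary.

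There is no real obstacle here; the entire content was already packaged in Lemma \ref{lemma:1.3}, and the corollary only requires the trivial inclusion $\A\textup{-proj} \subseteq \A\textup{-GProj}$. The only thing worth flagging, for cleanliness, is that one should explicitly remind the reader of this inclusion so that the hypothesis of Lemma \ref{lemma:1.3}(ii) is visibly satisfied; beyond that, no computation is needed.
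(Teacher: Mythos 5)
Your proof is correct and is exactly the paper's argument: the paper derives the corollary as a direct consequence of Lemma \ref{lemma:1.3}(ii), noting precisely that every projective left $\A$-module is Gorenstein projective, so a bounded complex of projectives satisfies the hypothesis on $V^\bullet$. Nothing further is needed.
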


%By using similar arguments to those in the proof of Lemma \ref{lemma:1.3}, we obtain the proof of Lemma \ref{lemma:1.7} below.

\begin{lemma}\label{lemma:1.7}
Let $R$ be an Artinian object in $\Ca$. Let $M^\bullet$ and $Q^\bullet$ be objects in $\mathcal{K}^-(R\A)$ with $Q^\bullet$ in $\mathcal{K}^b(R\A\textup{-proj})$ and $M^\bullet$ is a {\bf bounded} complex of finitely generated $R\A$-modules. Then $\Ext_{\mathcal{K}^-(R\A)}^i(Q^\bullet, M^\bullet)=0$ for all $i>0$.
\end{lemma}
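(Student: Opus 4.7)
The plan is to proceed by induction on the length of the bounded complex $Q^\bullet$, adapting the distinguished-triangle argument used in the proof of Lemma \ref{lemma:1.3}. After shifting if necessary, I may assume $Q^\bullet$ is concentrated in degrees $[-m, 0]$ for some integer $m \geq 0$.

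For the base case $m = 0$, $Q^\bullet$ is a single finitely generated projective $R\A$-module $Q$ concentrated in degree zero. A chain map $Q \to T^i M^\bullet$ is a single component $f \colon Q \to M^i$ satisfying $\delta_M^i \circ f = 0$, and such an $f$ is null-homotopic precisely when $f = \delta_M^{i-1} \circ h$ for some $h \colon Q \to M^{i-1}$. Using that $\Hom_{R\A}(Q, -)$ is exact (as $Q$ is projective), the group of chain homotopy classes becomes $\Hom_{R\A}(Q, \H^i(M^\bullet))$. The boundedness of $M^\bullet$ (with the implicit normalization used in the paper) then forces $\H^i(M^\bullet) = 0$ for all $i > 0$, which gives the base case.

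For the inductive step, I form the distinguished triangle
\begin{equation*}
T^{m-1}Q^{-m} \to Q'^\bullet \to Q^\bullet \to T^m Q^{-m}
\end{equation*}
in $\mathcal{K}^-(R\A)$ analogous to (\ref{triagproj1}), where $Q'^\bullet$ is the canonical truncation of $Q^\bullet$ to degrees $[-m+1, 0]$. Applying the contravariant functor $\Hom_{\mathcal{K}^-(R\A)}(-, M^\bullet)$ together with \cite[Prop.~I.1.1]{hartshorne} produces a long exact sequence whose end terms are of the form $\Ext^{j}_{\mathcal{K}^-(R\A)}(Q^{-m}, M^\bullet)$ for appropriately shifted $j$; these vanish by the base case applied to the single projective $Q^{-m}$. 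The middle term $\Ext^i_{\mathcal{K}^-(R\A)}(Q'^\bullet, M^\bullet)$ vanishes for $i > 0$ by the inductive hypothesis, since $Q'^\bullet$ is a strictly shorter bounded complex of finitely generated projective $R\A$-modules. Being sandwiched between zero groups, $\Ext^i_{\mathcal{K}^-(R\A)}(Q^\bullet, M^\bullet) = 0$ for all $i > 0$.

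The main obstacle is the base case: the chain-level manipulation identifying homotopy classes with $\Hom_{R\A}(Q, \H^i(M^\bullet))$ has to be done carefully using the projectivity of $Q$, and the degree bookkeeping in the inductive step needs to be tracked precisely so that each invocation of the base case lands in a degree where the corresponding cohomology of $M^\bullet$ vanishes.
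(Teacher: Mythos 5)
Your induction does not close, and the step that fails is precisely the one you flagged at the end. In the inductive step you apply $\Hom_{\mathcal{K}^-(R\A)}(-,T^iM^\bullet)$ to the triangle $T^{m-1}Q^{-m}\to Q'^\bullet\to Q^\bullet\to T^mQ^{-m}$; the outer terms of the resulting long exact sequence are $\Hom_{\mathcal{K}^-(R\A)}(T^mQ^{-m},T^iM^\bullet)=\Ext^{i-m}_{\mathcal{K}^-(R\A)}(Q^{-m},M^\bullet)$ and $\Ext^{i-m+1}_{\mathcal{K}^-(R\A)}(Q^{-m},M^\bullet)$, so the Ext-degree \emph{drops} by $m$ rather than rising. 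For $0<i\le m$ these are, by your own base-case identification, groups of the form $\Hom_{R\A}(Q^{-m},\H^{j}(M^\bullet))$ with $j\le 0$, which do not vanish in general; the sequence then only exhibits $\Ext^i_{\mathcal{K}^-(R\A)}(Q^\bullet,M^\bullet)$ as sandwiched between a zero group and a generally nonzero one, which proves nothing. This is the essential difference from Lemma \ref{lemma:1.3}(i), whose induction runs over the perfect \emph{target} so that the outer degrees move up to $i+m>0$, where $\Ext^{>0}_\A(V,\A\textup{-proj})=0$ is available; transplanting that argument to an induction on the perfect source reverses the direction of the shift and loses all the vanishing. The base case has the same defect: the hypotheses (and the paper's conventions, which only make complexes bounded above) do not give $\H^i(M^\bullet)=0$ for $i>0$, and ``after shifting'' is not harmless, since $\Ext^i(T^kQ^\bullet,M^\bullet)=\Ext^{i-k}(Q^\bullet,M^\bullet)$, so normalizing $Q^\bullet$ to live in degrees $[-m,0]$ changes the statement being proved. (The paper itself states Lemma \ref{lemma:1.7} without proof, so there is no argument of the author's to compare against.)

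The gap is not one of bookkeeping that more care would fix: under the hypotheses as literally stated the conclusion can fail. Take $R=\k$, $\A=\k[x]/(x^2)$, let $Q^\bullet$ be the complex $\A\xrightarrow{\cdot x}\A$ concentrated in degrees $-1,0$, and let $M^\bullet=\k$ in degree $0$. A chain map $Q^\bullet\to TM^\bullet$ is a single map $f^{-1}\colon\A\to\k$ with no constraint, and it is null-homotopic only if $f^{-1}=h\circ(\cdot x)$ for some $h\colon\A\to\k$, which forces $f^{-1}=0$ because $x$ annihilates $\k$; hence $\Ext^1_{\mathcal{K}^-(\A)}(Q^\bullet,M^\bullet)\cong\Hom_\A(\A,\k)\cong\k\neq 0$. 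So the statement needs an additional positional hypothesis — for instance that $Q^\bullet$ is concentrated in degrees at least equal to the top nonzero degree of $M^\bullet$, in which case there are no nonzero chain maps $Q^\bullet\to T^iM^\bullet$ for $i>0$ at all and the vanishing is immediate, with no induction needed — or it must be replaced by the weaker vanishing statements actually required where it is invoked (in Claim \ref{claim2} and in the proof of Lemma \ref{lemma:2.10}). As written, your proposal neither supplies such a hypothesis nor can its inductive step be completed.
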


\subsection{Quasi-lifts, deformations and (uni)versal deformation rings of complexes}

\begin{definition}\label{defi1}
Let $V^\bullet$ be a complex in $\mathcal{D}^-(\A)$ which has only finitely many non-zero cohomology groups, all which have finite $\k$-dimension and let $R\in \Ob(\hat{\Ca})$ be fixed but arbitrary. A {\it quasi-lift} of $V^\bullet$ over $R$ is a pair $(M^\bullet, \phi)$ consisting of a complex $M^\bullet$ in $\mathcal{D}^-(R\A)$ which has finite pseudocompact $R$-tor dimension together with an isomorphism $\phi:\k\hat{\otimes}_R^\mathbf{L}M^\bullet\to V^\bullet$ in $\mathcal{D}^-(\A)$. Two quasi-lifts $(M^\bullet, \phi)$ and $(M'^\bullet, \phi')$ of $V^\bullet$ over $R$ are said to be {\it isomorphic} if there exists an isomorphism $M^\bullet \to M'^\bullet$ in $\mathcal{D}^-(R\A)$ which carries $\phi$ to $\phi'$. A {\it deformation} of $V^\bullet$ over $R$ is an isomorphism class of quasi-lifts of $V^\bullet$ over $R$. We denote by $\mathrm{Def}_\A (V^\bullet, R)$ the set of all deformations of $V^\bullet$ over $R$.
The {\it deformation functor} $\hat{\Fun}_{V^\bullet}:\hat{\Ca}\to \mathrm{Sets}$ associated to $V^\bullet$ is defined as follows. For all $R\in \Ob(\hat{\Ca})$, $\hat{\Fun}_{V^\bullet}(R)=\mathrm{Def}_\A(V^\bullet,R)$, and for all morphisms $\alpha:R\to R'$ in $\hat{\Ca}$, $\hat{\Fun}_{V^\bullet}(\alpha): \mathrm{Def}_\A(V^\bullet,R)\to \mathrm{Def}_\A(V^\bullet,R')$ is the set map induced by $(M^\bullet, \phi)\mapsto (R'\hat{\otimes}_{R,\alpha}^\mathbf{L}M^\bullet, \phi_\alpha)$, where $\phi_\alpha$ denotes the composition $\k\hat{\otimes}^\mathbf{L}_{R'}(R'\hat{\otimes}^\mathbf{L}_{R,\alpha}M^\bullet)\cong \k\hat{\otimes}_R^\mathbf{L}M^\bullet\xrightarrow{\phi}V^\bullet$. We denote by $\Fun_{V^\bullet}$ the restriction of $\hat{\Fun}_{V^\bullet}$ to $\Ca$. It was proved in \cite[Thm. 1.1]{blehervelez2} that $\Fun_{V^\bullet}$ has a pro-representable hull $R(\A, V^\bullet)\in \Ob(\Ca)$ in the sense of \cite[Def. 2.7]{sch} and that $\hat{\Fun}_{V^\bullet}$ is continuous (see \cite{mazur} and \cite[Prop. 2.4.4]{blehervelez2}). Then there exists a quasi-lift $(U(\A,V^\bullet), \phi_U)$ of $V^\bullet$ over $R(\A,V^\bullet)$ with the following property. For each $R\in \Ob(\hat{\Ca})$, the map $\Hom_{\hat{\Ca}}(R(\A,V^\bullet),R)\to \hat{\Fun}_{V^\bullet}(R)$ induced by $\alpha\mapsto (R\hat{\otimes}^\mathbf{L}_{R(\A,V^\bullet), \alpha}U(\A,V^\bullet), \phi_{U,\alpha})$ is surjective, and this map is bijective if $R$ is the ring of dual numbers $\k[\epsilon]$ over $\k$ where $\epsilon^2=0$. The ring $R(\A, V^\bullet)$ is called the {\it versal deformation ring} of $V^\bullet$, which is unique up to a non-canonical isomorphism. If $\hat{\Fun}_{V^\bullet}$ is represented by $R(\A, V^\bullet)$, then we call $R(\A, V^\bullet)$ the {\it universal deformation ring} of $V^\bullet$, which is uniquely determined up to a canonical isomorphism. See \cite[Definitions 2.1.7 \& 2.1.14]{blehervelez2} for more details.
\end{definition}

It follows from \cite[Prop. 2.5.1]{blehervelez2} that if $V^\bullet$ has exactly one non-zero cohomology group $C$ of finite $\k$-dimension, then the versal deformation ring $R(\A,V^\bullet)$ coincides with the versal deformation ring $R(\A, C)$ considered in \cite{blehervelez}. In particular, the groups $\Hom_{\mathcal{D}^-(\A)}(V^\bullet,V^\bullet)$ and $\Hom_\A(C,C)$ are isomorphic. The cases for when $V^\bullet$ is a two-term complex and for when $V^\bullet$ is a completely split complex are also considered in \cite[\S 2.5]{blehervelez2}.  

\begin{remark}\label{rem:1.3}
Assume that $V^\bullet$ is a {\bf bounded} object in $\mathcal{D}^-(\A)$ such that all its cohomology groups have finite $\k$-dimension, let $R$ be an arbitrary Artinian object in $\Ca$, and let $(M^\bullet, \phi)$ be a quasi-lift of $V^\bullet$ over $R$ as in Definition \ref{defi1}.  
%Following \cite[Def. 2.2.1 \& Remark 2.2.2]{blehervelez2}, we define by $\mathcal{C}^-_\textup{fin}(R\A)$ (resp. $\mathcal{K}^-_\textup{fin}(R\A)$, resp. $\mathcal{D}^-_\textup{fin}(R\A)$) to be the full subcategory of  $\mathcal{C}^-(R\A)$ (resp. $\mathcal{K}^-(R\A)$, resp. $\mathcal{D}^-(R\A)$) whose objects are those complexes $X^\bullet$ of finite pseudocompact $R$-tor dimension having finitely many non-zero cohomology groups, all of which have finite $R$-length. 
By \cite[Remarks 2.2.2, 2.2.6, 2.3.5 \& 2.3.6]{blehervelez2}, we have the following.
\begin{enumerate}
\item We can replace $V^\bullet$ for a bounded above complex $\tilde{V}^\bullet$ of abstractly free finitely generated $\A$-modules such that $\tilde{V}^\bullet$ is isomorphic to $V^\bullet$ in $\mathcal{D}^-(\A)$.
\item We can replace $M^\bullet$ for a bounded above complex $\tilde{M}^\bullet$ of abstractly free finitely generated $R\A$-modules such that $\tilde{M}^\bullet$ and $M^\bullet$ are isomorphic in $\mathcal{D}^-(R\A)$, and that the isomorphism $\phi:\k\hat{\otimes}^\mathbf{L}_RM^\bullet\to V^\bullet$ can be replaced by a quasi-isomorphism $\phi': \k\hat{\otimes}_R \tilde{M}^\bullet \to \tilde{V}^\bullet$ in $\mathcal{C}^-(R\A)$ which is surjective on terms, and where $\tilde{V}^\bullet$ is as in (i).
\item Let $X^\bullet$ is an object of $\mathcal{D}^-(R\A)$ such that $X^\bullet$ is of finite $R$-tor dimension having finitely many non-zero cohomology groups, all of which have finite $R$-length. Assume further that all the terms of $X^\bullet$ have finite $\k$-length. If $f:M^\bullet\to X^\bullet$ is an isomorphism in $\mathcal{D}^-(R\A)$, then we can replace $X^\bullet$ for a bounded above complex $\tilde{X}^\bullet$ of abstractly free finitely generated $R\A$-modules such that $\tilde{X}^\bullet$ is isomorphic to $X^\bullet$ in $\mathcal{D}^-(R\A)$, and replace $f$ for a quasi-isomorphism $\tilde{f}:\tilde{M}^\bullet \to \tilde{X}^\bullet$ in $\mathcal{C}^-(R\A)$ that is surjective on terms, where $\tilde{M}^\bullet$ is as in (ii).     
\end{enumerate}
\end{remark}

\section{Representability of the Deformation Functor, (Uni)versal Deformation Rings and Perfect Complexes}\label{section3}
In this section, our goal is to prove Theorem \ref{thm01} by proving Proposition \ref{prop2} and Lemma \ref{lemma:2.10} below. 

\begin{proposition}\label{prop2}
Let $V^\bullet$ be a {\bf bounded} complex in $\mathcal{C}^-(\A)$ whose terms are all in $\A\textup{-GProj}$ and which has only finitely many nonzero cohomology groups, all of which have finite $\k$-dimension. If $\Hom_{\mathcal{D}_\mathfrak{perf}^-(\A)}(V^\bullet, V^\bullet)=\k$, then the versal deformation ring $R(\A,V^\bullet)$ (as in Definition \ref{defi1}) is universal. In particular, the deformation functor $\hat{\Fun}_{V^\bullet}(-)$ is representable. 
\end{proposition}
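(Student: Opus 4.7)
The plan is to upgrade the known versality of $R(\A,V^\bullet)$ to universality by verifying Schlessinger's criterion (H4) for the deformation functor $\hat{\Fun}_{V^\bullet}$. Since $\hat{\Fun}_{V^\bullet}$ already admits the hull $R(\A,V^\bullet)$ by \cite[Thm. 1.1]{blehervelez2}, representability is equivalent to the following lifting statement (compare \cite[Prop. 2.7 \& Thm. 2.11]{sch} and the proof of \cite[Thm. 2.6]{blehervelez}): for every small extension $\pi:R\twoheadrightarrow R_0$ in $\Ca$ with kernel $J\cong\k$ satisfying $\m_R J=0$, every quasi-lift $(M^\bullet,\phi)$ of $V^\bullet$ over $R$, and every deformation automorphism $\sigma_0$ of $(R_0\hat{\otimes}^{\mathbf{L}}_{R,\pi}M^\bullet,\phi_\pi)$ over $R_0$, there exists a deformation automorphism $\sigma$ of $(M^\bullet,\phi)$ over $R$ whose image under $R_0\hat{\otimes}^{\mathbf{L}}_{R,\pi}(-)$ equals $\sigma_0$.

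To prove this lifting statement, I first use Remark \ref{rem:1.3} to model $V^\bullet$ and $M^\bullet$ by bounded-above complexes of abstractly free finitely generated $\A$- and $R\A$-modules, with $\phi$ represented by a termwise surjective quasi-isomorphism. The $R$-flatness of the terms of $M^\bullet$ together with $J\m_R=0$ then yields a termwise short exact sequence $0\to J\otimes_\k V^\bullet\to M^\bullet\xrightarrow{\pi_M}M_0^\bullet\to 0$ of complexes, where $M_0^\bullet=R_0\hat{\otimes}_{R,\pi}M^\bullet$, and hence an exact triangle $J\otimes_\k V^\bullet\to M^\bullet\xrightarrow{\pi_M}M_0^\bullet\xrightarrow{\eta}T(J\otimes_\k V^\bullet)$ in $\mathcal{D}^-(R\A)$. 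Applying $\Hom_{\mathcal{D}^-(R\A)}(M^\bullet,-)$, the existence of the required $\sigma$ reduces to the vanishing of the obstruction class $\eta\circ\sigma_0\circ\pi_M\in \Hom_{\mathcal{D}^-(R\A)}(M^\bullet,T(J\otimes_\k V^\bullet))$; since the target is annihilated by $\m_R$, base-change along $R\to\k$ and the quasi-isomorphism $\phi$ identify this group with $J\otimes_\k\Hom_{\mathcal{D}^-(\A)}(V^\bullet,TV^\bullet)$.

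Because $\sigma_0$ reduces modulo $\m_{R_0}$ to the identity on $V^\bullet$, the composition $\eta\circ\pi_M$ vanishes (the identity manifestly lifts to $\mathrm{id}_{M^\bullet}$), so the obstruction may be rewritten as $\eta\circ(\sigma_0-\mathrm{id})\circ\pi_M$; this class is $\m_{R_0}$-valued and, through the structure of the triangle above, may be exhibited as a morphism factoring through a perfect complex over $\A$ attached to the small extension. At this point the two pieces of the hypothesis combine: $\Hom_{\mathcal{D}^-_{\mathfrak{perf}}(\A)}(V^\bullet,V^\bullet)=\k$ means every endomorphism of $V^\bullet$ in $\mathcal{D}^-(\A)$ is a scalar multiple of the identity plus a morphism factoring through some perfect complex, while Lemma \ref{lemma:1.3}(ii) applied to the Gorenstein-projective complex $V^\bullet$ forces $\Hom_{\mathcal{D}^-(\A)}(V^\bullet,TQ^\bullet)=0$ for every perfect $Q^\bullet$ over $\A$. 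Together these force the obstruction class to vanish, producing the required lift $\sigma\in\Hom_{\mathcal{D}^-(R\A)}(M^\bullet,M^\bullet)$. Finally, $\sigma$ is automatically an isomorphism because its reduction modulo $\m_R$ equals $\mathrm{id}_{V^\bullet}$, by Lemma \ref{lemma:1.0} applied to the mapping cone of $\sigma$ together with Nakayama's lemma.

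The main obstacle is converting the abstract singularity-category hypothesis into the effective vanishing of the obstruction class. Concretely, one must show that the class in $J\otimes_\k\Hom_{\mathcal{D}^-(\A)}(V^\bullet,TV^\bullet)$ factors through the image of some map into $TQ^\bullet$ with $Q^\bullet$ perfect, so that Lemma \ref{lemma:1.3}(ii) becomes applicable; this is the derived-categorical analogue of the stable-endomorphism argument of \cite[Thm. 2.6]{blehervelez}, and its careful execution inside the Verdier localization $\mathcal{D}^-(\A)\to\mathcal{D}^-_{\mathfrak{perf}}(\A)$ is the principal technical challenge.
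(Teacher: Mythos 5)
Your overall strategy (upgrade versality to universality by checking Schlessinger's (H$_4$), reduced to lifting automorphisms of quasi-lifts along small extensions, with the vanishing $\Hom_{\mathcal{D}^-(\A)}(V^\bullet,TQ^\bullet)=0$ for perfect $Q^\bullet$ from Lemma \ref{lemma:1.3}(ii) as the engine) is in the same spirit as the paper, which also verifies (H$_4$) in Claim \ref{claim5}. But there is a genuine gap at exactly the point you flag and then defer: the assertion that the obstruction class $\eta\circ(\sigma_0-\mathrm{id})\circ\pi_M$ ``may be exhibited as a morphism factoring through a perfect complex over $\A$'' is never proved, and it does not follow from the hypothesis as you use it. The hypothesis $\Hom_{\mathcal{D}^-_{\mathfrak{perf}}(\A)}(V^\bullet,V^\bullet)=\k$ only controls endomorphisms of $V^\bullet$ \emph{over the residue field}; your $\sigma_0$ is an endomorphism of $M_0^\bullet$ over an arbitrary Artinian $R_0$, and the fact that it reduces to the identity does not make $\sigma_0-\mathrm{id}$ (nor the connecting-map class $\eta\circ(\sigma_0-\mathrm{id})\circ\pi_M$, which is a secondary invariant, not the reduction of $\sigma_0-\mathrm{id}$) factor through a perfect complex. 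Note that the obstruction lives in $J\otimes_\k\Ext^1_{\mathcal{D}^-(\A)}(V^\bullet,V^\bullet)$, a group that is typically nonzero under the hypothesis (otherwise $R(\A,V^\bullet)\cong\k$ and there is nothing to prove), so its vanishing genuinely requires the perfect-factoring input you have not supplied. Transferring the decomposition ``scalar plus morphism factoring through a perfect complex'' from $V^\bullet$ over $\k$ to quasi-lifts over arbitrary Artinian rings is precisely the heart of the paper's argument: Claim \ref{claim4} (surjectivity of $\eta_{M^\bullet}:R\to\Hom_{\mathcal{D}^-_{\mathfrak{perf}}(R\A)}(M^\bullet,M^\bullet)$), proved by induction on small extensions and resting on the lifting Claims \ref{claim1} and \ref{claim2} (which in turn use Lemma \ref{lemma:1.3}(ii), Lemma \ref{lemma:1.7} and Lemma \ref{lemma:1.1}) and on a careful calculus of fractions in the Verdier quotient. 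Your proposal replaces this inductive bootstrap by the phrase ``principal technical challenge,'' so the proof is not complete.

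Two smaller points. First, even granting a factorization of $\sigma_0-\mathrm{id}$ through a perfect complex $Q_0^\bullet$ over $R_0\A$, you would still need to convert it into a factorization through a perfect complex over $\A$ (or lift $Q_0^\bullet$ to $R\A$ as in Lemma \ref{lemma:1.1}(i) and argue as in Claims \ref{claim1}--\ref{claim2}); $\Hom_{\mathcal{D}^-(R\A)}(Q_0^\bullet,T(J\otimes_\k V^\bullet))$ does not vanish for free, since $Q_0^\bullet$ is not perfect over $R\A$. Second, your reduction of (H$_4$) to automorphism lifting also quietly uses that the pair of corrected isomorphisms over the two factors glues to an isomorphism over $R'\times_R R'$ and that abstractly isomorphic quasi-lifts define the same deformation; the paper handles the latter with Claim \ref{claim3}, which again uses the singularity-category hypothesis and the lifting machinery, so it cannot simply be absorbed into the definition of ``deformation automorphism'' without argument.
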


\begin{remark}\label{rem2}
Let $V^\bullet$ be as in the hypothesis of Proposition \ref{prop2}. Since the functor $\hat{\Fun}_{V^\bullet}$ is continuous by \cite[Prop. 2.2.4]{blehervelez2}, most of the arguments used to prove Proposition \ref{prop2} can be carried out for the restriction $\Fun_{V^\bullet}$ of $\hat{\Fun}_{V^\bullet}$ to the full subcategory $\Ca$ of $\hat{\Ca}$ of Artinian objects by using {\it small extensions} in $\Ca$, i.e., surjections $\pi:R\to R_0$ of Artinian objects in $\Ca$ such that the kernel of $\pi$ is a principal ideal $tR$ annihilated by the maximal ideal $\mathfrak{m}_R$ of $R$. 
\end{remark}

\begin{proof}
We prove Proposition \ref{prop2} by carefully adjusting some of the arguments in the proof of \cite[Thm. 2.6]{blehervelez} to our context.

\begin{claim}\label{claim1}
Let $\pi:R\to R_0$ be a surjection of Artinian objects in $\Ca$. Let $M^\bullet$ be an object in  $\mathcal{C}^-(R\A)$ such that all its terms are abstractly free finitely generated $R\A$-modules, and let $Q^\bullet$ be an arbitrary complex in $\mathcal{K}^b(R\A\textup{-proj})$. Let $M_0^\bullet$ and $Q^\bullet_0$  be objects in $\mathcal{C}^-(R_0\A)$ with $Q^\bullet_0$ a bounded complex such that there are quasi-isomorphisms $g:R_0\hat{\otimes}_{R,\pi} M^\bullet\to M_0^\bullet$ and $h:R_0\hat{\otimes}_{R,\pi} Q^\bullet\to Q_0^\bullet$ in $\mathcal{C}^-(R_0\A)$ that are surjective on terms. Assume further that $\k\hat{\otimes}_R M^\bullet$ is isomorphic to $V^\bullet$ in $\mathcal{D}^-(\A)$.  If $v_0\in \Hom_{\mathcal{K}^-(R_0\A)}(M_0^\bullet,Q_0^\bullet)$, then there exists $v\in \Hom_{\mathcal{K}^-(R\A)}(M^\bullet,Q^\bullet)$ with $v_0\circ g=h\circ(R_0\hat{\otimes}_{R,\pi}v)$.
\end{claim}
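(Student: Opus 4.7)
The plan is to prove the claim in two steps. First, I will use the quasi-isomorphism $h$ to replace the target $Q_0^\bullet$ by $R_0\hat{\otimes}_{R,\pi}Q^\bullet$, producing a morphism $\tilde{v}_0:R_0\hat{\otimes}_{R,\pi}M^\bullet\to R_0\hat{\otimes}_{R,\pi}Q^\bullet$ in $\mathcal{K}^-(R_0\A)$ satisfying $h\circ\tilde{v}_0=v_0\circ g$. Second, I will lift $\tilde{v}_0$ through the reduction functor $R_0\hat{\otimes}_{R,\pi}-$ to obtain $v:M^\bullet\to Q^\bullet$ in $\mathcal{K}^-(R\A)$ with $R_0\hat{\otimes}_{R,\pi}v=\tilde{v}_0$ in $\mathcal{K}^-(R_0\A)$. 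Composing then yields $h\circ(R_0\hat{\otimes}_{R,\pi}v)=v_0\circ g$, as desired.

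For the first step, $R_0\hat{\otimes}_{R,\pi}M^\bullet$ is a bounded above complex of finitely generated free $R_0\A$-modules and is therefore K-projective. Consequently, $\Hom_{\mathcal{K}^-(R_0\A)}(R_0\hat{\otimes}_{R,\pi}M^\bullet,-)$ carries quasi-isomorphisms to bijections, so the quasi-isomorphism $h$ induces a bijection between $\Hom_{\mathcal{K}^-(R_0\A)}(R_0\hat{\otimes}_{R,\pi}M^\bullet,R_0\hat{\otimes}_{R,\pi}Q^\bullet)$ and $\Hom_{\mathcal{K}^-(R_0\A)}(R_0\hat{\otimes}_{R,\pi}M^\bullet,Q_0^\bullet)$. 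The unique preimage of $v_0\circ g$ under this bijection is the $\tilde{v}_0$ we want.

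For the second step, I would reduce to the case where $\pi$ is a small extension (in the sense of Remark \ref{rem2}) by factoring $\pi$ as a finite composition of such, noting that the intermediate complexes $R_i\hat{\otimes}_R M^\bullet$ and $R_i\hat{\otimes}_R Q^\bullet$ continue to satisfy all the hypotheses. So assume $\ker\pi=tR\cong\k$. Since the terms of $Q^\bullet$ are finitely generated projective $R\A$-modules, hence flat over $R$, there is a short exact sequence of complexes
\begin{equation*}
0\to tQ^\bullet\to Q^\bullet\to R_0\hat{\otimes}_{R,\pi}Q^\bullet\to 0,
\end{equation*}
yielding a distinguished triangle $tQ^\bullet\to Q^\bullet\to R_0\hat{\otimes}_{R,\pi}Q^\bullet\to T(tQ^\bullet)$ in $\mathcal{K}^-(R\A)$. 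Applying $\Hom_{\mathcal{K}^-(R\A)}(M^\bullet,-)$, together with the scalar-restriction adjunction identifying $\Hom_{\mathcal{K}^-(R\A)}(M^\bullet,R_0\hat{\otimes}_{R,\pi}Q^\bullet)$ with $\Hom_{\mathcal{K}^-(R_0\A)}(R_0\hat{\otimes}_{R,\pi}M^\bullet,R_0\hat{\otimes}_{R,\pi}Q^\bullet)$, produces a long exact sequence in which surjectivity of the lifting map reduces to the vanishing of the obstruction group $\Hom_{\mathcal{K}^-(R\A)}(M^\bullet,T(tQ^\bullet))$.

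The main obstacle, and the point at which the Gorenstein projective hypothesis on the terms of $V^\bullet$ enters decisively, is verifying this vanishing. Since $tR\cong\k$ as an $R$-module on which $\m_R$ acts trivially, $tQ^\bullet\cong\k\hat{\otimes}_RQ^\bullet$ as complexes of $R\A$-modules whose $R$-action factors through $R\to\k$. By adjunction,
\begin{equation*}
\Hom_{\mathcal{K}^-(R\A)}(M^\bullet,T(tQ^\bullet))\cong\Hom_{\mathcal{K}^-(\A)}(\k\hat{\otimes}_RM^\bullet,T(\k\hat{\otimes}_RQ^\bullet)).
\end{equation*}
Because $\k\hat{\otimes}_RM^\bullet$ is a bounded above complex of finitely generated free $\A$-modules (hence K-projective) and is isomorphic to $V^\bullet$ in $\mathcal{D}^-(\A)$, while $\k\hat{\otimes}_RQ^\bullet$ belongs to $\mathcal{K}^b(\A\textup{-proj})$, this group is identified with $\Ext^1_{\mathcal{D}^-(\A)}(V^\bullet,\k\hat{\otimes}_RQ^\bullet)$, which vanishes by Lemma \ref{lemma:1.3}(ii) precisely because the terms of $V^\bullet$ are Gorenstein projective. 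This is the conceptual heart of the argument; with the obstruction dispatched, the lifting $v$ exists and the claim follows.
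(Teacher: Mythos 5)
Your proposal is correct and takes essentially the same route as the paper's proof: reduce to a small extension, lift along the short exact sequence $0\to tQ^\bullet\to Q^\bullet\to R_0\hat{\otimes}_{R,\pi}Q^\bullet\to 0$, and kill the obstruction by identifying $\Hom_{\mathcal{K}^-(R\A)}(M^\bullet, T(tQ^\bullet))$ with $\Ext^1_{\mathcal{D}^-(\A)}(V^\bullet,\k\hat{\otimes}_RQ^\bullet)=0$ via Lemma \ref{lemma:1.3}(ii), your only reorganization being that you first trade $Q_0^\bullet$ for $R_0\hat{\otimes}_{R,\pi}Q^\bullet$ using K-projectivity of $R_0\hat{\otimes}_{R,\pi}M^\bullet$, whereas the paper builds $h$ into the exact sequence and inverts $(g\circ\tau_{M^\bullet})^\ast$. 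One small wording fix: the non-split short exact sequence yields a triangle in $\mathcal{D}^-(R\A)$ rather than in $\mathcal{K}^-(R\A)$, but since the terms of $M^\bullet$ are projective ($M^\bullet$ is K-projective and $\Hom^\bullet(M^\bullet,-)$ is termwise exact on the sequence) the long exact sequence of homotopy Hom groups you invoke is still valid, exactly as in the paper's appeal to \cite[Prop. I.6.1]{hartshorne}.
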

\begin{proof}
Since $R$ and $R_0$ are Artinian, we can assume that $\pi:R\to R_0$ is a small extension as in Remark \ref{rem2}. Consider the short exact sequence in $\mathcal{C}^-(R\A)$
\begin{equation}\label{eqn1}
0\to tQ^\bullet\to Q^\bullet\xrightarrow{h\circ \tau_{Q^\bullet}} Q_0^\bullet\to 0,
\end{equation} 
where $\tau_{Q^\bullet}: Q^\bullet \to R_0\hat{\otimes}_{R,\pi}Q^\bullet$ is the natural morphism in $\mathcal{C}^-(R\A)$ which is surjective on terms.  After applying $\Hom_{\mathcal{D}^-(R\A)}(M^\bullet,-)=\Hom_{\mathcal{K}^-(R\A)}(M^\bullet,-)$ to (\ref{eqn1}) and using \cite[Prop. I.6.1]{hartshorne}, we obtain a long exact sequence 
\begin{equation}\label{eqn2}
\cdots\to \Hom_{\mathcal{K}^-(R\A)}(M^\bullet,Q^\bullet)\xrightarrow{(h\circ \tau_{Q^\bullet})_\ast}\Hom_{\mathcal{K}^-(R\A)}(M^\bullet,Q_0^\bullet)\to \Ext_{\mathcal{D}^-(R\A)}^1(M^\bullet,tQ^\bullet)\to \cdots.
\end{equation}
Since the terms of $M^\bullet$ are abstractly free finitely generated $R\A$-modules and $tQ^\bullet \cong \k\hat{\otimes}_RQ^\bullet$ in $\mathcal{C}^-(\A)$, we obtain an isomorphism of abelian groups 
\begin{equation*}
\Ext_{\mathcal{D}^-(R\A)}^1(M^\bullet,tQ^\bullet)=\Hom_{\mathcal{K}^-(R\A)}(M^\bullet, T(tQ^\bullet))\cong \Hom_{\mathcal{K}^-(\A)}(\k\hat{\otimes}_RM^\bullet, T(\k\hat{\otimes}_RQ^\bullet)).
\end{equation*}
Note that in particular that  $P^\bullet=\k\hat{\otimes}_RQ^\bullet$ is a perfect complex over $\A$. Since by hypothesis $\k\hat{\otimes}_RM^\bullet$ and $V^\bullet$ are isomorphic in $\mathcal{D}^-(\A)$, we obtain an isomorphism of abelian groups 
\begin{align*}
\Hom_{\mathcal{K}^-(\A)}(\k\hat{\otimes}_RM^\bullet, T(\k\hat{\otimes}_RQ^\bullet))&=\Hom_{\mathcal{D}^-(\A)}(\k\hat{\otimes}_RM^\bullet, T(\k\hat{\otimes}_RQ^\bullet))\\
&\cong \Hom_{\mathcal{D}^-(\A)}(V^\bullet, TP^\bullet)\\
&=\Ext_{\mathcal{D}^-(\A)}^1(V^\bullet, P^\bullet).
\end{align*}
By using Lemma \ref{lemma:1.3}(ii) we obtain that $\Ext_{\mathcal{D}^-(\A)}^1(V^\bullet,P^\bullet)=0$, which implies 
that the map $(h\circ\tau_{Q^\bullet})_\ast$ in (\ref{eqn2}) is surjective. Let $\tau_{M^\bullet}:M^\bullet\to R_0\hat{\otimes}_{R.\pi}M^{\bullet}$ be the natural morphism in $\mathcal{C}^-(R\A)$ which is surjective on terms. 
Note that since $tR$ is annihilated by $\mathfrak{m}_R$, it follows that all the terms of $Q^\bullet_0$ are also annihilated by $t$. Therefore $\tau_{M^\bullet}$ induces an isomorphism 
\begin{equation*}
\Hom_{\mathcal{K}^-(R_0\A)}(M^\bullet_0,Q^\bullet_0)\xrightarrow{(g\circ\tau_M)^\ast} \Hom_{\mathcal{K}^-(R\A)}(M^\bullet,Q^\bullet_0),
\end{equation*}
and thus there exists a morphism $v:M^\bullet\to Q^\bullet$ in $\mathcal{K}^-(R\A)$ such that  $v_0\circ g=h\circ(R_0\hat{\otimes}_{R,\pi}v)$. 
This finishes the proof of Claim \ref{claim1}.
\end{proof}

\begin{claim}\label{claim2}
Let $M^\bullet$ (resp. $M_0^\bullet$) and $N^\bullet$ (resp. $N_0^\bullet$) be objects in $\mathcal{C}^-(R\A)$ (resp. $\mathcal{C}^-(R_0\A$)) such that there are quasi-isomorphisms $g_M:R_0\hat{\otimes}_{R,\pi}M^\bullet\to M_0^\bullet$ and $g_N:R_0\hat{\otimes}_{R,\pi}N^\bullet\to N_0^\bullet$ in $\mathcal{C}^-(R_0\A)$ that are surjective on terms. Suppose that all the terms of $M^\bullet$ are abstractly free finitely generated $R\A$-modules, that $N^\bullet$ has finite $R$-tor dimension and all its terms are finitely generated $R\A$-modules, and that $\k\hat{\otimes}_R M^\bullet$ and $V^\bullet$ are isomorphic in $\mathcal{D}^-(\A)$. If $\sigma_0\in \Hom_{\mathcal{K}^-(R_0\A)}(M_0^\bullet, N_0^\bullet)$ factors through an object in $\mathcal{K}^b(R_0\A\textup{-proj})$, then there exists a morphism $\sigma\in \Hom_{\mathcal{K}^-(R\A)}(M^\bullet, N^\bullet)$ such that $\sigma$ factors through an object in $\mathcal{K}^b(R\A\textup{-proj})$ and $\sigma_0\circ g_M=g_N\circ (R_0\hat{\otimes}_{R,\pi}\sigma)$. 
\end{claim}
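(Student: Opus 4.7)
The plan is to factor $\sigma_0$ through a perfect complex and lift the two halves separately. By hypothesis write $\sigma_0 = \beta_0 \circ \alpha_0$ with $\alpha_0: M_0^\bullet \to Q_0^\bullet$ and $\beta_0: Q_0^\bullet \to N_0^\bullet$ in $\mathcal{K}^-(R_0\A)$, where $Q_0^\bullet \in \mathcal{K}^b(R_0\A\textup{-proj})$. Following Remark \ref{rem2}, first reduce to the case where $\pi: R \to R_0$ is a small extension with $\ker \pi = tR$. Applying Lemma \ref{lemma:1.1}(i) lifts $Q_0^\bullet$ to a perfect complex $Q^\bullet \in \mathcal{K}^b(R\A\textup{-proj})$ together with an isomorphism $q: R_0 \hat{\otimes}_{R,\pi} Q^\bullet \to Q_0^\bullet$ in $\mathcal{C}^-(R_0\A)$.

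Lifting $\alpha_0$ is now immediate from Claim \ref{claim1} applied to the triple $(M^\bullet, Q^\bullet, Q_0^\bullet)$ with quasi-isomorphism $q$: this produces $\alpha: M^\bullet \to Q^\bullet$ in $\mathcal{K}^-(R\A)$ with $\alpha_0 \circ g_M = q \circ (R_0 \hat{\otimes}_{R,\pi} \alpha)$. For $\beta_0$ I would consider the short exact sequence in $\mathcal{C}^-(R\A)$
\begin{equation*}
0 \to K^\bullet \to N^\bullet \xrightarrow{g_N \circ \tau_{N^\bullet}} N_0^\bullet \to 0,
\end{equation*}
and combine the standard sequence $0 \to tN^\bullet \to N^\bullet \xrightarrow{\tau_{N^\bullet}} R_0 \hat{\otimes}_{R,\pi} N^\bullet \to 0$ with the acyclicity of $\ker g_N$ (a surjective quasi-isomorphism) to conclude that $K^\bullet$ is quasi-isomorphic to $tN^\bullet$. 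Applying $\Hom_{\mathcal{K}^-(R\A)}(Q^\bullet, -)$, the obstruction to lifting the composite $Q^\bullet \xrightarrow{\tau_{Q^\bullet}} R_0 \hat{\otimes}_{R,\pi} Q^\bullet \xrightarrow{\beta_0 \circ q} N_0^\bullet$ through $g_N \circ \tau_{N^\bullet}$ lies in $\Ext^1_{\mathcal{K}^-(R\A)}(Q^\bullet, tN^\bullet)$; this group vanishes by Lemma \ref{lemma:1.7}, after replacing $N^\bullet$ by a bounded quasi-isomorphic complex of finitely generated $R\A$-modules via Remark \ref{rem:1.3}(iii) together with Remark \ref{remark:1.1} (using the finite $R$-tor dimension of $N^\bullet$). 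This yields $\beta: Q^\bullet \to N^\bullet$ in $\mathcal{K}^-(R\A)$ with $\beta_0 \circ q = g_N \circ (R_0 \hat{\otimes}_{R,\pi} \beta)$.

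Setting $\sigma = \beta \circ \alpha$, this morphism factors through the perfect complex $Q^\bullet$, and combining the two compatibility identities gives
\begin{equation*}
\sigma_0 \circ g_M = \beta_0 \circ \alpha_0 \circ g_M = \beta_0 \circ q \circ (R_0 \hat{\otimes}_{R,\pi} \alpha) = g_N \circ (R_0 \hat{\otimes}_{R,\pi}(\beta \circ \alpha)) = g_N \circ (R_0 \hat{\otimes}_{R,\pi} \sigma),
\end{equation*}
as required.

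The main obstacle is the lifting of $\beta_0$: whereas Claim \ref{claim1} killed the relevant obstruction by exploiting the Gorenstein projectivity of the source (via Lemma \ref{lemma:1.3}), here the source $Q^\bullet$ is perfect and one must invoke Lemma \ref{lemma:1.7} on the target instead. The technical difficulty lies in ensuring the bounded-complex hypothesis of Lemma \ref{lemma:1.7} is satisfied, which requires a careful preliminary replacement of $N^\bullet$; fortunately this replacement is routine given the finite $R$-tor dimension assumption.
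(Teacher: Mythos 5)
Your proposal is correct and follows essentially the same route as the paper's proof: factor $\sigma_0$ through a perfect complex $Q_0^\bullet$, lift $Q_0^\bullet$ by Lemma \ref{lemma:1.1}(i), lift the first factor by Claim \ref{claim1}, lift the second factor by killing the obstruction with Lemma \ref{lemma:1.7} after truncating $N^\bullet$ to a bounded complex via Remark \ref{remark:1.1}, and compose. Your extra remark that the kernel of $g_N\circ\tau_{N^\bullet}$ is only quasi-isomorphic (not equal) to $tN^\bullet$, using acyclicity of $\ker g_N$, is a slightly more careful rendering of the same step in the paper.
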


\begin{proof}
Assume that $\sigma_0:M_0^\bullet\to N_0^\bullet$ factors through a complex $Q_0^\bullet$ in $\mathcal{K}^b(R_0\A\textup{-proj})$, namely $\sigma_0=w_0\circ v_0$ with $v_0: M_0^\bullet\to Q_0^\bullet$ and $w_0: Q_0^\bullet \to M_0^\bullet$, where $v_0$ and $w_0$ are suitable morphisms in $\mathcal{K}^-(R_0\A)$. It follows from Lemma \ref{lemma:1.1}(i) that there exists a complex $Q^\bullet$ in $\mathcal{K}^b(R\A\textup{-proj})$ such that $R_0\hat{\otimes}_{R,\pi} Q^\bullet$ is isomorphic to $Q_0$  via some isomorphism $h:R_0\hat{\otimes}_{R,\pi} Q^\bullet\to Q_0^\bullet$ in $\mathcal{C}^-(R_0\A)$.  By Claim \ref{claim1}, there exists $v\in \Hom_{\mathcal{K}^-(R\A)}(M^\bullet, Q^\bullet)$ such that $v_0\circ g_M=h\circ(R_0\hat{\otimes}_{R,\pi}v)$. On the other hand, since $N^\bullet$ has finite $R$-tor dimension, by Remark \ref{remark:1.1} we can assume that $N^\bullet$ is a {\bf bounded} complex whose terms are finitely generated free $R$-modules. Let $\tau_{N^\bullet}: N^\bullet \to R_0\hat{\otimes}_R N^\bullet$  be the natural morphisms in $\mathcal{C}^-(R\A)$ that is surjective on terms. By \cite[Prop. I.6.1]{hartshorne}, the short exact sequence in $\mathcal{C}^-(R\A)$
\begin{equation*}
0\to tN^\bullet\to N^\bullet\xrightarrow{g_N\circ \tau_{N^\bullet}} N_0^\bullet\to 0,
\end{equation*}
induces an exact sequence of abelian groups
\begin{equation*}
\Hom_{\mathcal{K}^-(R\A)}(Q^\bullet,N^\bullet) \xrightarrow{(g\circ\tau_{N^\bullet})_\ast} \Hom_{\mathcal{K}^-(R\A)}(Q^\bullet,N_0^\bullet)\to \Ext_{\mathcal{D}^-(R\A)}^1(Q^\bullet,tN^\bullet).
\end{equation*}
Since by Lemma \ref{lemma:1.7} $\Ext_{\mathcal{D}^-(R\A)}^1(Q^\bullet,tN^\bullet)=0$, it follows that $(g_N\circ \tau_{N^\bullet})_\ast$ is surjective. Then there exists $w\in \Hom_{\mathcal{K}^-(R\A)}(Q^\bullet,N^\bullet)$ such that $g_N\circ \tau_{N^\bullet}\circ w=w_0\circ  h \circ \tau_{Q^\bullet}$, where $\tau_{Q^\bullet}: Q^\bullet \to R_0\hat{\otimes}_R Q^\bullet$ is the natural morphism in $\mathcal{C}^-(R\A)$ which is surjective on terms. In particular, we obtain that $g_N\circ (R_0\hat{\otimes}_{R,\pi}w)=w_0\circ h$.  Let $\sigma = w\circ v\in \Hom_{\mathcal{K}^-(R\A)}(M^\bullet,N^\bullet)$. Then, $\sigma_0\circ g_M=w_0\circ v_0\circ g_M=w_0\circ h \circ (R_0\hat{\otimes}_{R,\pi}v)=g_N\circ (R_0\hat{\otimes}_{R,\pi}w)\circ (R_0\hat{\otimes}_{R,\pi}v)=g_N\circ (R_0\hat{\otimes}_{R,\pi}\sigma)$. This finishes the proof of Claim \ref{claim2}.
\end{proof}

\begin{claim}\label{claim3}
Let $R$ be an Artinian object in $\Ca$, and let $(M^\bullet, \phi)$ and $(M'^\bullet, \phi')$ be two quasi-lifts of $V^\bullet$ over $R$. If there exists an isomorphism $f:M^\bullet \to M'^\bullet$ in $\mathcal{D}^-(R\A)$, then there exists an isomorphism $f': M^\bullet \to M'^\bullet$ in $\mathcal{D}^-(R\A)$ such that $\phi'\circ(\k\hat{\otimes}_R^\mathbf{L}f')=\phi$. In particular, $[M^\bullet, \phi]=[M'^\bullet, \phi']$ in $\Fun_{V^\bullet}(R)$. 
\end{claim}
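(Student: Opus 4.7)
The plan is to construct $f'$ in the form $f'=f\circ\tilde{u}$, where $\tilde{u}:M^\bullet\to M^\bullet$ is an automorphism in $\mathcal{D}^-(R\A)$ whose reduction modulo $\m_R$ corrects the mismatch between $\phi$ and $\phi'\circ(\k\hat{\otimes}_R^\mathbf{L}f)$. Concretely, set $\alpha=\phi'\circ(\k\hat{\otimes}_R^\mathbf{L}f)\circ\phi^{-1}\in\End_{\mathcal{D}^-(\A)}(V^\bullet)$, which is an automorphism of $V^\bullet$; the goal is to build $\tilde{u}$ so that its reduction corresponds, via $\phi$, to $\alpha^{-1}$. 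First I would invoke Remark \ref{rem:1.3} to replace $V^\bullet$ by a bounded-above complex $\tilde{V}^\bullet$ of abstractly free finitely generated $\A$-modules, and $M^\bullet,M'^\bullet$ by bounded-above complexes of abstractly free finitely generated $R\A$-modules, with $\phi$ and $\phi'$ represented as surjective chain-level quasi-isomorphisms in $\mathcal{C}^-(R\A)$ landing in $\tilde{V}^\bullet$.

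Using the hypothesis $\Hom_{\mathcal{D}_\mathfrak{perf}^-(\A)}(V^\bullet,V^\bullet)=\k$ of Proposition \ref{prop2}, the canonical ring homomorphism $\End_{\mathcal{D}^-(\A)}(V^\bullet)\to\End_{\mathcal{D}_\mathfrak{perf}^-(\A)}(V^\bullet)=\k$ has kernel equal to the two-sided ideal $\mathcal{I}$ of endomorphisms factoring through a perfect complex over $\A$ (as spelled out in \S\ref{sec2}), and is split by $\lambda\mapsto\lambda\cdot\mathrm{id}_{V^\bullet}$. This gives $\End_{\mathcal{D}^-(\A)}(V^\bullet)=\k\cdot\mathrm{id}_{V^\bullet}\oplus\mathcal{I}$ as $\k$-vector spaces; since $\alpha$ is an automorphism, its image in $\k$ must be nonzero, so $\alpha^{-1}=\lambda^{-1}\cdot\mathrm{id}_{V^\bullet}+s'$ for a unique $\lambda\in\k^\times$ and some $s'\in\mathcal{I}$. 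I would then apply Claim \ref{claim2} to the surjection $\pi:R\to\k$, with $M^\bullet=N^\bullet$ our quasi-lift, $M_0^\bullet=N_0^\bullet=\tilde{V}^\bullet$, and $g_M=g_N=\phi$: since $s'$ factors through a perfect complex over $\A$, Claim \ref{claim2} yields $\tilde{s'}\in\Hom_{\mathcal{K}^-(R\A)}(M^\bullet,M^\bullet)$ that factors through a perfect complex over $R\A$ and satisfies $s'\circ\phi=\phi\circ(\k\hat{\otimes}_R\tilde{s'})$.

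Setting $\tilde{u}=\lambda^{-1}\cdot\mathrm{id}_{M^\bullet}+\tilde{s'}\in\Hom_{\mathcal{K}^-(R\A)}(M^\bullet,M^\bullet)$, the identities above combine to give $\phi\circ(\k\hat{\otimes}_R\tilde{u})=\alpha^{-1}\circ\phi$, so $\k\hat{\otimes}_R\tilde{u}$ is a quasi-isomorphism in $\mathcal{C}^-(\A)$. Because the terms of $M^\bullet$ are abstractly free finitely generated $R\A$-modules, Lemma \ref{lemma:1.1}(ii) then gives that $\tilde{u}$ is a quasi-isomorphism in $\mathcal{C}^-(R\A)$ and hence an isomorphism in $\mathcal{D}^-(R\A)$. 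Finally $f'=f\circ\tilde{u}$ is an isomorphism in $\mathcal{D}^-(R\A)$, and the computation $\phi'\circ(\k\hat{\otimes}_R^\mathbf{L}f')=\phi'\circ(\k\hat{\otimes}_R^\mathbf{L}f)\circ(\k\hat{\otimes}_R\tilde{u})=\alpha\circ\alpha^{-1}\circ\phi=\phi$ gives the desired compatibility, and yields $[M^\bullet,\phi]=[M'^\bullet,\phi']$ in $\Fun_{V^\bullet}(R)$. I expect the main technical point to be checking that the chain-level reduction $\k\hat{\otimes}_R\tilde{u}$ is an honest quasi-isomorphism in $\mathcal{C}^-(\A)$ (not merely an isomorphism after passing to $\mathcal{K}^-(\A)$ or $\mathcal{D}^-(\A)$), so that Lemma \ref{lemma:1.1}(ii) is applicable; this should follow because both summands of $\tilde{u}$ are concrete chain maps of free $R\A$-complexes whose sum reduces, after applying $\phi$, to the isomorphism $\alpha^{-1}$ of $V^\bullet$.
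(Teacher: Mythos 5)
Your proposal is correct and follows essentially the same route as the paper's proof: you decompose the correcting automorphism into a scalar multiple of the identity plus an endomorphism factoring through a perfect complex (using $\Hom_{\mathcal{D}^-_{\mathfrak{perf}}(\A)}(V^\bullet,V^\bullet)=\k$), lift the latter via Claim \ref{claim2} applied to $\pi:R\to\k$, lift the scalar to $R$, and conclude with Lemma \ref{lemma:1.1}(ii), exactly as in the paper (which works with $\bar f''=\alpha^{-1}$ and extracts the decomposition from a roof diagram rather than from the splitting of the quotient map, a purely cosmetic difference).
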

\begin{proof}
Let $\bar{f}'':V^\bullet \to V^\bullet$ be the isomorphism $\phi\circ(\k\hat{\otimes}_R^\mathbf{L}f)^{-1}\circ(\phi')^{-1}$ in $\mathcal{D}^-(\A)$. Note that the image of $\bar{f}''$ in $\mathcal{D}_\mathfrak{perf}^-(\A)$ can be represented by a diagram in $\mathcal{D}^-(\A)$ of the form 

\begin{equation*}
\begindc{\commdiag}[\Size]
\obj(1,-1)[p0]{$V^\bullet$}
\obj(0,0)[p1]{$V^\bullet$}
\obj(2,0)[p2]{$V^\bullet$}
\mor{p1}{p0}{$\bar{f}''$}[-1,0]
\mor{p2}{p0}{$1_{V^\bullet}$}
\enddc
\end{equation*} 
Since $\Hom_{\mathcal{D}_\mathfrak{perf}^-(\A)}(V^\bullet, V^\bullet)=\k$, it follows that there exists an scalar $\bar{s}_f\in \k$ such that the following diagram in $\mathcal{D}^-(\A)$ is commutative (see e.g. \cite[\S 3]{krause3}): 
%\Hom_{\mathcal{D}_\textup{sg}(\A\textup{-mod})}(V^\bullet, V^\bullet)=
\begin{equation*}
\begindc{\commdiag}[\Size]
\obj(3,3)[p0]{$V^\bullet$}
\obj(2,2)[p1]{$V^\bullet$}
\obj(3,2)[p2]{$V'^\bullet$}
\obj(4,2)[p3]{$V^\bullet$}
\obj(3,1)[p4]{$V^\bullet$}
\mor{p1}{p0}{$\bar{s}_f \cdot 1_{V^\bullet}$}
\mor{p1}{p2}{}
\mor{p0}{p2}{$v$}
\mor{p3}{p2}{$v$}[-1,0]
\mor{p1}{p4}{$\bar{f}''$}[-1,0]
\mor{p3}{p4}{$1_{V^\bullet}$}
\mor{p3}{p0}{$1_{V^\bullet}$}[-1,0]
\mor{p4}{p2}{$v$}
\enddc
\end{equation*}
%where the mapping cone of $v:V^\bullet \to V'^\bullet$ is a perfect complex over $\A$.
where $v:V^\bullet \to V'^\bullet$ fits into an exact triangle $V^\bullet\xrightarrow{v} V'^\bullet \to W^\bullet \to TV^\bullet$ in $\mathcal{D}^-(\A)$, with $W^\bullet$ isomorphic to a perfect complex over $\A$.
By Remark \ref{rem:1.3}, we can assume that $V^\bullet$ is a bounded above complex of abstractly free finitely generated $\A$-modules, that $M^\bullet$ and $M'^\bullet$ are bounded above complexes of abstractly free finitely generated $R\A$-modules, that $f$ is given by a quasi-isomorphism in $\mathcal{C}^-(R\A)$ that is surjective on terms, and that $\phi$, $\phi'$ and $\bar{f}''$ are given by quasi-isomorphisms in $\mathcal{C}^-(\A)$ that are surjective on terms.  Moreover, $v$ can be assumed to be a morphism in $\mathcal{K}^-(\A)$ whose mapping cone is isomorphic to an object in $\mathcal{K}^b(\A\textup{-proj})$. Since $v\circ(\bar{f}''-\bar{s}_f\cdot 1_{V^\bullet})=0$, it follows 
that $\bar{\sigma}_f=\bar{f}''-\bar{s}_f\cdot 1_{V^\bullet}\in\Hom_{\mathcal{K}^-(\A)}(V^\bullet, V^\bullet)$ factors through an object in $\mathcal{K}^b(\A\textup{-proj})$.  By Claim \ref{claim2}, there exists a morphism $\sigma_f\in \Hom_{\mathcal{K}^-(R\A)}(M^\bullet, M^\bullet)$ factoring through an object in $\mathcal{K}^b(R\A\textup{-proj})$ such that $\bar{\sigma}_f\circ \phi=\phi \circ(\k\hat{\otimes}_R \sigma_f)$. Let $s_f\in R$ be such that $\k\hat{\otimes}_Rs_f=\bar{s}_f$, and let $f''=s_f\cdot 1_{M^\bullet} +\sigma_f$. Then $\phi\circ (\k\hat{\otimes}_R f'')=\bar{f}''\circ\phi$ is a quasi-isomorphism in $\mathcal{C}^-(\A)$, and hence so is $\k\hat{\otimes}_R f''$. It follows from Lemma \ref{lemma:1.1}(ii) that $f'':M^\bullet \to M^\bullet$ is a quasi-isomorphism in $\mathcal{C}^-(R\A)$. If we define $f'=f\circ f'':M^\bullet\to M'^\bullet$, then $f'$ is a quasi-isomorphism and $\phi'\circ(\k\hat{\otimes}_R f')=\phi$. This finishes the proof of Claim \ref{claim3}. 
\end{proof}

\begin{claim}\label{claim4}
If $(M^\bullet, \phi)$ is a quasi-lift of $V^\bullet$ over an Artinian object $R$ in $\Ca$, then the ring homomorphism $\eta_{M^\bullet}:R\to \Hom_{\mathcal{D}_{\mathfrak{perf}}^-(R\A)}(M^\bullet, M^\bullet)$ is surjective, where for all $r\in R$, $\eta_{M^\bullet}(r)$ is represented by the diagram 
\begin{equation*}
\begindc{\commdiag}[\Size]
\obj(1,-1)[p0]{$M^\bullet$}
\obj(0,0)[p1]{$M^\bullet$}
\obj(2,0)[p2]{$M^\bullet$}
\mor{p2}{p0}{$1_{M^\bullet}$}[1,0]
\mor{p1}{p0}{$r\cdot 1_{M^\bullet}$}[-1,0]
\enddc
\end{equation*}
\end{claim}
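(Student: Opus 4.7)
The plan is to prove Claim~\ref{claim4} by induction on $\dim_\k R$, reducing the inductive step to small extensions via Remark~\ref{rem2}. When $R=\k$, the quasi-lift $(M^\bullet,\phi)$ is isomorphic to $(V^\bullet,1_{V^\bullet})$ and $\eta_{V^\bullet}$ becomes the structure map $\k\to \Hom_{\mathcal{D}^-_\mathfrak{perf}(\A)}(V^\bullet,V^\bullet)=\k$, which is surjective by hypothesis. For the inductive step, fix a small extension $\pi:R\to R_0$ with kernel $tR\cong\k$ annihilated by $\mathfrak{m}_R$, and set $M_0^\bullet=R_0\hat{\otimes}^\mathbf{L}_{R,\pi}M^\bullet$; this is a quasi-lift of $V^\bullet$ over $R_0$, so $\eta_{M_0^\bullet}$ is surjective by induction. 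Given $\bar f\in\Hom_{\mathcal{D}^-_\mathfrak{perf}(R\A)}(M^\bullet,M^\bullet)$, write its image $\bar f_0$ under $R_0\hat{\otimes}^\mathbf{L}_{R,\pi}(-)$ as $\eta_{M_0^\bullet}(r_0)$, lift $r_0$ to $r\in R$, and replace $\bar f$ by $\bar f-\eta_{M^\bullet}(r)$. It then suffices to exhibit $s\in R$ with $\bar f=\eta_{M^\bullet}(ts)$ under the additional assumption $\bar f_0=0$.

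Using Remark~\ref{rem:1.3}, represent $\bar f$ by a chain map $f:M^\bullet\to M^\bullet$ in $\mathcal{C}^-(R\A)$ whose source has abstractly free finitely generated $R\A$-module terms. The hypothesis $\bar f_0=0$ means $f_0=R_0\hat{\otimes}_{R,\pi}f$ factors through a perfect $R_0\A$-complex in $\mathcal{K}^-(R_0\A)$; Claim~\ref{claim2}, applied with $N^\bullet=M^\bullet$ and identity comparison maps, provides $\sigma$ factoring through a perfect $R\A$-complex with $R_0\hat{\otimes}_{R,\pi}\sigma=f_0$. Subtracting $\sigma$ from $f$ preserves the class of $\bar f$ in $\mathcal{D}^-_\mathfrak{perf}(R\A)$, and after lifting a null-homotopy of $f_0$ along the termwise surjection $M^\bullet\to M_0^\bullet$ we may assume $R_0\hat{\otimes}_{R,\pi}f=0$ in $\mathcal{C}^-(R_0\A)$. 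Then $f$ factors as $i\circ f_1$ with $f_1:M^\bullet\to tM^\bullet$ and $i:tM^\bullet\hookrightarrow M^\bullet$. Since $tM^\bullet$ is annihilated by $\mathfrak{m}_R$ and multiplication by $t$ identifies $V'^\bullet=\k\hat{\otimes}_RM^\bullet$ with $tM^\bullet$ in $\mathcal{C}^-(\A)$, the extension--of--scalars adjunction yields a natural bijection
$\Hom_{\mathcal{K}^-(R\A)}(M^\bullet,tM^\bullet)\cong\Hom_{\mathcal{K}^-(\A)}(V'^\bullet,V'^\bullet)$;
let $\hat f_1$ be the counterpart of $f_1$. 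The class of $\hat f_1$ in $\Hom_{\mathcal{D}^-_\mathfrak{perf}(\A)}(V^\bullet,V^\bullet)=\k$ is some scalar $\bar s\in\k$, and a chosen factorization $\hat f_1-\bar s\cdot 1_{V'^\bullet}=\alpha\circ\beta$ through a perfect $\A$-complex $P^\bullet$ packages what remains to be lifted to $R\A$.

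To finish, pick any $s\in R$ reducing to $\bar s$ and observe that $\eta_{M^\bullet}(ts):m\mapsto tsm$ lands in $tM^\bullet$ and corresponds under the adjunction to $\bar s\cdot 1_{V'^\bullet}$. By Lemma~\ref{lemma:1.1}(i) applied to $\pi:R\to\k$, choose a perfect $R\A$-complex $\tilde P^\bullet$ with $\k\hat{\otimes}_R\tilde P^\bullet\cong P^\bullet$, and by Claim~\ref{claim1} (for $R\to\k$) lift $\beta$ to $\tilde\beta\in\Hom_{\mathcal{K}^-(R\A)}(M^\bullet,\tilde P^\bullet)$. Define $\tilde\alpha:\tilde P^\bullet\to tM^\bullet$ by $\tilde p\mapsto t\cdot\alpha(\bar{\tilde p})$, where $\bar{\tilde p}$ is the image of $\tilde p$ in $\tilde P^\bullet/\mathfrak{m}_R\tilde P^\bullet\cong P^\bullet$ and the factor of $t$ is the chosen isomorphism $V'^\bullet\cong tM^\bullet$; the relation $t\mathfrak{m}_R=0$ makes $\tilde\alpha$ $R\A$-linear and a chain map, and a direct computation shows that $\tilde\alpha\circ\tilde\beta$ corresponds under the adjunction to $\alpha\circ\beta$. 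Hence $\tilde\alpha\circ\tilde\beta$ and $f_1-(m\mapsto tsm)$ have the same image in $\Hom_{\mathcal{K}^-(\A)}(V'^\bullet,V'^\bullet)$, so they agree in $\Hom_{\mathcal{K}^-(R\A)}(M^\bullet,tM^\bullet)$, giving $f-\eta_{M^\bullet}(ts)=i\circ\tilde\alpha\circ\tilde\beta$, which factors through $\tilde P^\bullet$ and therefore vanishes in $\mathcal{D}^-_\mathfrak{perf}(R\A)$. The main obstacle is precisely this last step: a factorization through a perfect $\A$-complex is not automatically a factorization through a perfect $R\A$-complex under restriction of scalars, and we must combine Lemma~\ref{lemma:1.1}(i), Claim~\ref{claim1}, and the explicit construction of $\tilde\alpha$ via multiplication by $t$ to keep the lifts compatible with the adjunction isomorphism.
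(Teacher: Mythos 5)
Your inductive framework (small extensions, reduction to the kernel $tR$, the identification of $\Hom_{\mathcal{K}^-(R\A)}(M^\bullet,tM^\bullet)$ with $\Hom_{\mathcal{K}^-(\A)}(\k\hat{\otimes}_RM^\bullet,\k\hat{\otimes}_RM^\bullet)$, and the use of $\Hom_{\mathcal{D}^-_{\mathfrak{perf}}(\A)}(V^\bullet,V^\bullet)=\k$) follows the same lines as the paper, but there is a genuine gap at the start of the inductive step: you assume that the class $\bar f\in\Hom_{\mathcal{D}^-_{\mathfrak{perf}}(R\A)}(M^\bullet,M^\bullet)$ is represented by an honest chain map $f:M^\bullet\to M^\bullet$, citing Remark \ref{rem:1.3}. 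That remark only allows you to replace complexes, and morphisms already given in $\mathcal{D}^-(R\A)$, by convenient chain-level representatives; it says nothing about morphisms in the Verdier quotient. A morphism in $\mathcal{D}^-_{\mathfrak{perf}}(R\A)$ is a fraction: a diagram $M^\bullet\xrightarrow{f}M'^\bullet\xleftarrow{s}M^\bullet$ with the mapping cone of $s$ perfect, and the localization functor $\mathcal{D}^-(R\A)\to\mathcal{D}^-_{\mathfrak{perf}}(R\A)$ is not known to be full on $\Hom(M^\bullet,M^\bullet)$ at this stage; indeed such fullness is a consequence of the very surjectivity of $\eta_{M^\bullet}$ being proved, since $\eta_{M^\bullet}$ factors through $\Hom_{\mathcal{D}^-(R\A)}(M^\bullet,M^\bullet)$, so assuming it begs a substantial part of the question. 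Nor can one simply clear the denominator: lifting $f$ through $s$ meets an obstruction in $\Hom_{\mathcal{D}^-(R\A)}(M^\bullet,C^\bullet_s)$ with $C^\bullet_s$ perfect, and the available vanishing (Lemma \ref{lemma:1.3}(ii)) only kills $\Hom_{\mathcal{D}^-(\A)}(V^\bullet,T^iP^\bullet)$ for $i>0$, not $i=0$; already $\Hom_\A(V,\A)$ is typically nonzero for $V$ Gorenstein projective.

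This is exactly why the paper's proof never collapses the roof: it carries the pair $(f,s)$ and the auxiliary target $M'^\bullet$ through the whole induction, applies the induction hypothesis to the induced roof $(f_0,s_0)$ over $R_0$, expresses the equality $\eta_{M_0^\bullet}(r_0)=\hat f_0$ by a commutative diagram in the quotient involving a third complex $M_0''^\bullet$, and uses Lemma \ref{lemma:1.1}(iii) to conclude that $s_0\circ(r_0\cdot 1_{M_0^\bullet})-f_0$ (not $r_0\cdot 1-f_0$) factors through a perfect complex over $R_0\A$; all subsequent comparisons take values in $M'^\bullet$, and the final step again produces a commutative diagram of fractions rather than an equality of chain maps. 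Granting your unjustified representability assumption, the remainder of your argument (lifting the factorization through a perfect complex via Lemma \ref{lemma:1.1}(i), Claim \ref{claim1}, and the explicit map $\tilde\alpha$ built from multiplication by $t$) is sound and runs parallel to the paper's use of Claim \ref{claim2} and the isomorphism (\ref{homotopyiso}); but as written it only proves that $\eta_{M^\bullet}$ surjects onto the image of $\Hom_{\mathcal{D}^-(R\A)}(M^\bullet,M^\bullet)$ in $\Hom_{\mathcal{D}^-_{\mathfrak{perf}}(R\A)}(M^\bullet,M^\bullet)$, not onto the whole endomorphism ring. To repair it you would need to either prove that fullness separately or rework the induction at the level of roofs, as the paper does.
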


\begin{proof}
Let $\pi:R\to R_0$ be a small extension in $\Ca$ as in Remark \ref{rem2} and assume that the ring homomorphism $\eta_{M_0^\bullet}: R_0\to \Hom_{\mathcal{D}_{\mathfrak{perf}}^-(R_0\A)}(M_0^\bullet, M_0^\bullet)$ is surjective whenever $M_0^\bullet$ defines a quasi-lift of $V^\bullet$ over $R_0$.   Let $(M^\bullet,\phi)$ be a quasi-lift of $V^\bullet$ over $R$ and let $\hat{f}:M^\bullet\to M^\bullet$ be a morphism in $\mathcal{D}_{\mathfrak{perf}}^-(R\A)$. It follows that $\hat{f}$ can be represented by a diagram in $\mathcal{D}^-(R\A)$ of the form
\begin{equation}\label{morphisms}
\begindc{\commdiag}[\Size]
\obj(1,-1)[p0]{$M'^\bullet$}
\obj(0,0)[p1]{$M^\bullet$}
\obj(2,0)[p2]{$M^\bullet$}
\mor{p1}{p0}{$f$}[-1,0]
\mor{p2}{p0}{$s$}
\enddc
\end{equation}
where $f$ is a morphism in $\mathcal{D}^-(R\A)$ and $s:M^\bullet\to M'^\bullet$ fits into an exact triangle $M^\bullet\xrightarrow{s} M'^\bullet \to N^\bullet\to TM^\bullet$ with $N^\bullet$ isomorphic to a perfect complex over $R\A$. As in the proof of Claim \ref{claim3} we use Remark \ref{rem:1.3} so that we can assume that $V^\bullet$ is a bounded above complex of abstractly free finitely generated $\A$-modules, that $M^\bullet$ is a bounded above complex of abstractly free finitely generated $R\A$-modules, that $f$ is a morphism in $\mathcal{K}^-(R\A)$, and that $\phi$ is given by a quasi-isomorphism in $\mathcal{C}^-(R\A)$ that is surjective on terms. We can also assume that $C_s^\bullet$ is isomorphic to an object in $\mathcal{K}^b(R\A\textup{-proj})$.  In particular, the terms of $M'^\bullet$ are finitely generated $R\A$-modules and $C_s^\bullet$ is isomorphic to a bounded complex whose terms are flat as $R$-modules, which implies that $C_s^\bullet$ also has finite $R$-tor dimension (see \cite[Prop. 5.1]{illusie1} and \cite[\S 8.3.6.2]{illusie2}). Since $M^\bullet$ has finite $R$-tor dimension, it follows from \cite[\S 5.3]{illusie1} that $M'^\bullet$ has also finite $R$-tor dimension. On the other hand, note that $R_0\hat{\otimes}_{R,\pi} C_s^\bullet$ is a perfect complex over $R_0\A$ and that $f$ and $s$ induce morphisms $f_0, s_0: M_0^\bullet\to M_0'^\bullet$ in $\mathcal{K}^-(R_0\A)$, where $M_0^\bullet = R_0\hat{\otimes}_{R,\pi}M^\bullet$ and $M_0'^\bullet = R_0\hat{\otimes}_{R,\pi}M'^\bullet$. Let $C_{s_0}^\bullet$ be the mapping cone of $s_0$ in $\mathcal{K}^-(R_0\A)$. By the axiom (TR2) of triangulated categories (see e.g. \cite[\S I.1]{hartshorne}) and by \cite[Prop. I.1.1 (c)]{hartshorne}, there exists an isomorphism $h: R_0\hat{\otimes}_{R,\pi} C_s^\bullet\to C_{s_0}^\bullet$ such that the following diagram of triangles in $\mathcal{K}^-(R_0\A)$ is commutative: 
\begin{equation*}
\begindc{\commdiag}[\Sized]
\obj(0,2)[p0]{$R_0\hat{\otimes}_{R, \pi}M^\bullet$}
\obj(4,2)[p1]{$R_0\hat{\otimes}_{R, \pi}M'^\bullet$}
\obj(8,2)[p2]{$R_0\hat{\otimes}_{R, \pi}C_s^\bullet$}
\obj(12,2)[p3]{$R_0\hat{\otimes}_{R, \pi}TM^\bullet$}
\obj(0,0)[q0]{$M_0^\bullet$}
\obj(4,0)[q1]{$M_0'^\bullet$}
\obj(8,0)[q2]{$C_{s_0}^\bullet$}
\obj(12,0)[q3]{$TM_0^\bullet$}
\mor{p0}{p1}{}
\mor{p1}{p2}{}
\mor{p2}{p3}{}
\mor{q0}{q1}{}
\mor{q1}{q2}{}
\mor{q2}{q3}{}
\mor{p0}{q0}{$=$}
\mor{p1}{q1}{$=$}
\mor{p2}{q2}{$h$}
\mor{p3}{q3}{$=$}
\enddc
\end{equation*}   
It follows that $C_{s_0}^\bullet$ is also a perfect complex over $R_0\A$, and thus that the diagram
\begin{equation*}
\begindc{\commdiag}[\Size]
\obj(1,-1)[p0]{$M_0'^\bullet$}
\obj(0,0)[p1]{$M_0^\bullet$}
\obj(2,0)[p2]{$M_0^\bullet$}
\mor{p2}{p0}{$s_0$}[1,0]
\mor{p1}{p0}{$f_0$}[-1,0]
\enddc
\end{equation*}
represents a morphism $\hat{f}_0: M_0^\bullet \to M_0^\bullet$ in $\mathcal{D}_{\mathfrak{perf}}^-(R_0\A)$. By induction, there exists $r_0\in R_0$ such that $\eta_{M_0^\bullet}(r_0)= \hat{f}_0$ in $\mathcal{D}_{\mathfrak{perf}}^-(R_0\A)$. Therefore, there exists $M_0''^\bullet$, and morphisms  $g_0: M_0^\bullet\to M_0''^\bullet$, $u_0: M_0^\bullet\to M_0''^\bullet$ and $v_0: M_0'^\bullet\to M_0''^\bullet$ in $\mathcal{K}^-(R_0\A)$, such that the mapping cone $C_{s_0}^\bullet$ of $s_0$ is a perfect complex over $R_0\A$ and such that the following diagram in $\mathcal{K}^-(R_0\A)$ is commutative:
\begin{equation*}\label{diagcomm1}
\begindc{\commdiag}[\Size]
\obj(3,3)[p0]{$M_0^\bullet$}
\obj(2,2)[p1]{$M_0^\bullet$}
\obj(3,2)[p2]{$M_0''^\bullet$}
\obj(4,2)[p3]{$M_0^\bullet$}
\obj(3,1)[p4]{$M_0'^\bullet$}
\mor{p1}{p0}{$r_0\cdot 1_{M^\bullet_0}$}
\mor{p1}{p2}{$g_0$}
\mor{p0}{p2}{$u_0$}
\mor{p3}{p2}{$u_0$}[-1,0]
\mor{p1}{p4}{$f_0$}[-1,0]
\mor{p3}{p4}{$s_0$}
\mor{p3}{p0}{$1_{M_0^\bullet}$}[-1,0]
\mor{p4}{p2}{$v_0$}
\enddc
\end{equation*}
By Lemma \ref{lemma:1.1}(iii), the mapping cone $C_{v_0}^\bullet$ of  $v_0$ is also a perfect complex over $R_0\A$. Since  $v_0\circ (s_0\circ(r_0\cdot 1_{M^\bullet_0})-f_0)=0$, it follows 
that the morphism $\sigma_0= s_0\circ (r_0\cdot 1_{M^\bullet_0})-f_0\in \Hom_{\mathcal{K}^-(R_0\A)}(M_0^\bullet, M_0'^\bullet)$ factors through a perfect complex over $R_0\A$. By Claim \ref{claim3}, there exists a morphism $\sigma\in  \Hom_{\mathcal{K}^-(R\A)}(M^\bullet, M'^\bullet)$ which factors through a perfect complex over $R\A$ such that $\sigma_0=R_0\hat{\otimes}_{R,\pi}\sigma$.  Let $r\in R$ be such that $\pi(r)=r_0$. It follows that
$r_0\cdot 1_{M^\bullet_0}=R_0\hat{\otimes}_{R,\pi}(r\cdot 1_{M^\bullet})$.  Consider the morphism $h=s\circ (r\cdot 1_{M^\bullet})-f-\sigma: M^\bullet \to M'^\bullet$ in $\mathcal{K}^-(R\A)$. It follows that $h_0=R_0\hat{\otimes}_{R,\pi}h$ is equal to the zero morphism in $\mathcal{K}^-(R_0\A)$. Since as noted above, the terms of $M'^\bullet$ are all abstractly free over $R\A$, it follows by \cite[Lemma 2.3.3]{blehervelez} (see also \cite[Sublemma VI.8.20]{milne}) that $h$ is equal to a morphism $h_1\in \Hom_{\mathcal{K}^-(R\A)}(M^\bullet, tM'^\bullet)$, where $tR$ is the kernel of $\pi$ as in Remark \ref{rem2}.  
Thus 
\begin{equation}\label{fequals}
s\circ (r\cdot 1_{M^\bullet})-f=\sigma+ h_1.
\end{equation}
Let $\tau_1: tM^\bullet\to V^\bullet$ and $\tau_2: tM'^\bullet \to \k\hat{\otimes}_R M'^\bullet$ be isomorphisms in $\mathcal{C}^-(R\A)$ induced by the isomorphism $tR\cong \k$ in $\Ca$, and let $\tau_{M^\bullet}:M^\bullet \to \k\hat{\otimes}_R M^\bullet$ and $\tau_{M'^\bullet}:M'^\bullet \to \k\hat{\otimes}_R M'^\bullet$ be the natural morphisms in $\mathcal{C}^-(R\A)$ that are surjective on terms. 
%Then  $t\cdot s: tM^\bullet\to tM'^\bullet$ induces a morphism $\k\hat{\otimes}_Rs: V^\bullet\to\k\hat{\otimes}_RM'^\bullet$ in $\mathcal{K}^-(\A)$ such that $(\k\hat{\otimes}_Rs)\circ \tau_1=\tau_2\circ(t\cdot s)$, and whose mapping cone $C_{\k\hat{\otimes}_Rs}^\bullet$ is a perfect complex over $\A$. 
We obtain an isomorphism of abelian groups 
\begin{equation}\label{homotopyiso}
\Hom_{\mathcal{K}^-(R\A)}(M^\bullet, tM'^\bullet)\to \Hom_{\mathcal{K}^-(\A)}(\k\hat{\otimes}_RM^\bullet, \k\hat{\otimes}_R M'^\bullet).
\end{equation}
which sends the homotopy class of a morphism $g:M^\bullet\to tM'^\bullet$  to the homotopy class of $g_1$ when $g_1\circ \tau_{M^\bullet}=\tau_2\circ g$.
Assume that $h_1$ is sent to $\beta_1$ by the isomorphism (\ref{homotopyiso}). Let $s: M^\bullet \to M'^\bullet$ be as in (\ref{morphisms}). Note that $s$ induces a morphism $tM^\bullet \to tM'^\bullet$ which we denote by $s_t$ such that $(\k\hat{\otimes}_Rs)\circ \tau_1=\tau_2\circ s_t$, and since $C_s^\bullet$ is a perfect complex over $R\A$, it follows that $C_{\k\hat{\otimes}_Rs}^\bullet$ is also a perfect complex over $\A$. It follows that both $\beta_1$ and $\k\hat{\otimes}_Rs$ define a morphism in $\mathcal{D}_{\mathfrak{perf}}^-(\A)$
\begin{equation*}
\begindc{\commdiag}[\Size]
\obj(1,-1)[p0]{$\k\hat{\otimes}_RM'^\bullet$}
\obj(0,0)[p1]{$\k\hat{\otimes}_RM^\bullet$}
\obj(2,0)[p2]{$\k\hat{\otimes}_RM^\bullet$}
\mor{p2}{p0}{$\k\hat{\otimes}_Rs$}[1,0]
\mor{p1}{p0}{$\beta_1$}[-1,0]
\enddc
\end{equation*}
Since $\Hom_{\mathcal{D}_{\mathfrak{perf}}^-(\A)}(\k\hat{\otimes}_RM^\bullet, \k\hat{\otimes}M^\bullet)=\Hom_{\mathcal{D}_{\mathfrak{perf}}^-(\A)}(V^\bullet, V^\bullet)=\k$, there exists $\bar{a}\in \k$ such that the following diagram is commutative in 
$\mathcal{K}^-(\A)$:
\begin{equation*}\label{diagcomm2}
\begindc{\commdiag}[\Size]
\obj(3,3)[p0]{$\k\hat{\otimes}_RM^\bullet$}
\obj(2,2)[p1]{$\k\hat{\otimes}_RM^\bullet$}
\obj(3,2)[p2]{$V''^\bullet$}
\obj(4,2)[p3]{$\k\hat{\otimes}_RM^\bullet$}
\obj(3,1)[p4]{$\k\hat{\otimes}_RM'^\bullet$}
\mor{p1}{p0}{$\bar{a}\cdot 1_{\k\hat{\otimes}_RM^\bullet}$}
\mor{p1}{p2}{}
\mor{p0}{p2}{$y_0$}
\mor{p3}{p2}{$y_0$}[-1,0]
\mor{p1}{p4}{$\beta_1$}[-1,0]
\mor{p3}{p4}{$\k\hat{\otimes}_Rs$}
\mor{p3}{p0}{$1_{\k\hat{\otimes}_RM^\bullet}$}[-1,0]
\mor{p4}{p2}{$x_0$}
\enddc
\end{equation*}
where the mapping cone of $y_0$ is a perfect complex over $\A$. By Lemma \ref{lemma:1.3}(iii), the mapping cone of $x_0$ is also a perfect complex over $\A$. Since $x_0\circ ((\k\hat{\otimes}_Rs)\circ (\bar{a}\cdot 1_{\k\hat{\otimes}_RM^\bullet})-\beta_1)=0$, it follows that the morphism $(\k\hat{\otimes}_Rs)\circ \bar{a}\cdot 1_{\k\hat{\otimes}_RM^\bullet}-\beta_1\in \Hom_{\mathcal{K}^-(\A)}(\k\hat{\otimes}_RM^\bullet, \k\hat{\otimes}_RM'^\bullet)$ factors through a perfect complex over $\A$. By Claim \ref{claim2}, there exists $\sigma_1\in \Hom_{\mathcal{K}^-(R\A)}(M^\bullet, M'^\bullet)$ that factors through a perfect complex over $R\A$ such that $(\k\hat{\otimes}_Rs)\circ \bar{a}\cdot 1_{\k\hat{\otimes}_RM^\bullet}-\beta_1=\k\hat{\otimes}_R\sigma_1$ and $(\k\hat{\otimes}_R\sigma_1)\circ \tau_{M^\bullet}=\tau_{M'^\bullet}\circ \sigma_1$. Let $\sigma_2: M^\bullet \to tM'^\bullet$ be the pre-image of $\k\hat{\otimes}_R\sigma_1$ under the isomorphism (\ref{homotopyiso}). Let $a\in R$ such that $ta$ is sent to $\bar{a}$ by the isomorphism $tR\cong \k$. Note that $(\bar{a}\cdot 1_{\k\hat{\otimes}_RM^\bullet})\circ \tau_{M^\bullet}=\tau_2\circ (ta\cdot 1_{M^\bullet})$. Therefore, $\tau_2\circ(s_t\circ (ta\cdot 1_{M^\bullet})-h_1)=\tau_2\circ s_t\circ (ta\cdot 1_{M^\bullet})-\tau_2\circ h_1=(\k\hat{\otimes}_Rs)\circ \tau_1\circ (ta\cdot 1_{M^\bullet})-\beta_1\circ\tau_{M^\bullet}=(\k\hat{\otimes}_Rs)\circ (\bar{a}\cdot 1_{\k\hat{\otimes}_RM^\bullet})\circ\tau_{M^\bullet}-\beta_1\circ\tau_{M^\bullet}=(\k\hat{\otimes}_R\sigma_1)\circ \tau_{M^\bullet}=\tau_2\circ \sigma_2$, and consequently $s_t\circ(ta\cdot 1_{M^\bullet})-h_1=\sigma_2$. On the other hand,  since $\sigma_1$ factors through a perfect complex over $R\A$, there exists a suitable non-trivial morphism $u$ in $\mathcal{K}^-(R\A)$ whose mapping cone is isomorphic to a perfect complex over $R\A$ such that $\sigma_1\circ u=0$. This implies that $0=\tau_{M'^\bullet}\circ \sigma_1\circ u=(\k\hat{\otimes}_R \sigma_1)\circ \tau_{M^\bullet}\circ u= \tau_2\circ \sigma_2\circ u$. Since $\tau_2$ is an isomorphism, it follows that $\sigma_2\circ u=0$, which implies that $\sigma_2$ factors through a perfect complex over $R\A$. By using (\ref{fequals}) and that $s_t$ is the restriction of $s:M^\bullet \to M'^\bullet$ to $tM^\bullet$, we obtain that $s\circ (r\cdot 1_{M^\bullet})-f= \sigma+ s\circ (ta\cdot 1_{M^\bullet})-\sigma_2$, which is equivalent to 
\begin{equation*}\label{fequals2}
s\circ ((r-ta)\cdot 1_{M^\bullet})-f=\sigma-\sigma_2. 
\end{equation*}
Since both  $\sigma$ and $\sigma_2$ factor through a perfect complex over $R\A$, it follows 
that there exists a morphism $v: M'^\bullet \to M''^\bullet$ in $\mathcal{K}^-(R\A)$ whose mapping cone is a perfect complex over $R\A$ such that $v\circ (s\circ ((r-ta)\cdot 1_{M^\bullet})-f)=0$, and thus we obtain a commutative diagram in $\mathcal{K}^-(R\A)$:
\begin{equation*}\label{diagcomm3}
\begindc{\commdiag}[\Size]
\obj(3,3)[p0]{$M^\bullet$}
\obj(2,2)[p1]{$M^\bullet$}
\obj(3,2)[p2]{$M''^\bullet$}
\obj(4,2)[p3]{$M^\bullet$}
\obj(3,1)[p4]{$M'^\bullet$}
\mor{p1}{p0}{$(r-ta)\cdot 1_{M^\bullet}$}
\mor{p1}{p2}{$v\circ f$}
\mor{p0}{p2}{$v\circ s$}
\mor{p3}{p2}{$v\circ s$}[-1,0]
\mor{p1}{p4}{$f$}[-1,0]
\mor{p3}{p4}{$s$}
\mor{p3}{p0}{$1_{M^\bullet}$}[-1,0]
\mor{p4}{p2}{$v$}
\enddc
\end{equation*}
Since the mapping cone of $v\circ s$ is also a perfect complex over $R\A$, it follows from e.g. \cite[\S3.1]{krause3} that $\eta_{M^\bullet}(r-ta)= \hat{f}$ in $\Hom_{\mathcal{D}^-_{\mathfrak{perf}}(R\A)}(M^\bullet, M^\bullet)$, which proves that $\eta_{M^\bullet}$ is surjective. This finishes the proof of Claim \ref{claim4}.
\end{proof}

The following claim, verifies Schlessinger's criterion (H${_4}$) in \cite[Thm. 2.11]{sch} for $\Fun_{V^\bullet}$.

\begin{claim}\label{claim5}
Let $\pi:R'\to R$ be an small extension of Artinian objects in $\Ca$ as in Remark \ref{rem2}. The natural map $\Fun_{V^\bullet}(R'\times_RR')\to \Fun_{V^\bullet}(R')\times_{F_{V^\bullet}(R)}\Fun_{V^\bullet}(R')$ is injective.
\end{claim}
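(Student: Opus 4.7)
The plan is to verify Schlessinger's condition (H$_4$) by gluing the two given isomorphisms over $R'$ into a single isomorphism over $S:=R'\times_R R'$, exploiting Claim \ref{claim4} to describe automorphisms of the $R$-restriction as scalars modulo a correction that factors through a perfect complex, and Claim \ref{claim2} to lift such a correction from $R$ to $R'$. Let $p_1,p_2\colon S\to R'$ be the two projections, and suppose $(M^\bullet,\phi),(N^\bullet,\psi)$ are quasi-lifts of $V^\bullet$ over $S$ whose images in $\Fun_{V^\bullet}(R')\times_{\Fun_{V^\bullet}(R)}\Fun_{V^\bullet}(R')$ coincide. First I would use Remark \ref{rem:1.3} to replace every complex by a bounded above complex of abstractly free finitely generated modules and every relevant quasi-isomorphism by one that is surjective on terms; write $M_i^\bullet:=R'\hat{\otimes}^{\mathbf{L}}_{S,p_i}M^\bullet$ and $\bar{M}^\bullet:=R\hat{\otimes}^{\mathbf{L}}_S M^\bullet$, and analogously for $N$. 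The hypothesis then supplies quasi-isomorphisms $f_i\colon M_i^\bullet\to N_i^\bullet$ in $\mathcal{C}^-(R'\A)$ compatible with the projected augmentations.

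Next I would compare the two $R$-reductions $\bar{f}_1,\bar{f}_2\colon\bar{M}^\bullet\to\bar{N}^\bullet$. The composition $\bar{g}:=\bar{f}_2^{-1}\circ\bar{f}_1$ is an automorphism of $\bar{M}^\bullet$ in $\mathcal{D}^-(R\A)$ compatible with $\bar{\phi}$, so Claim \ref{claim4} applied to the quasi-lift $(\bar{M}^\bullet,\bar{\phi})$ over $R$ yields $r\in R$ with $\eta_{\bar{M}^\bullet}(r)=\bar{g}$ in $\mathcal{D}^-_{\mathfrak{perf}}(R\A)$; since the further reduction of $\bar{g}$ to $\End_{\mathcal{D}^-_{\mathfrak{perf}}(\A)}(V^\bullet)=\k$ is a nonzero scalar, $r$ must be a unit of $R$. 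By the description of zero morphisms in $\mathcal{D}^-_{\mathfrak{perf}}(R\A)$ recalled in \S\ref{sec2}, the difference $\bar{\sigma}:=\bar{g}-r\cdot 1_{\bar{M}^\bullet}$ factors through a perfect complex over $R\A$. I would then lift $r$ to a unit $r'\in (R')^\times$ (possible because $\pi\colon R'\to R$ is a local surjection) and apply Claim \ref{claim2} to lift $\bar{\sigma}$ to a morphism $\sigma\colon M_1^\bullet\to M_1^\bullet$ in $\mathcal{K}^-(R'\A)$ that also factors through a perfect complex, with the natural compatibility under reduction to $R$.

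Setting $h:=r'\cdot 1_{M_1^\bullet}+\sigma$, the reduction $\k\hat{\otimes}_{R'}h$ agrees up to the chosen quasi-isomorphisms with $\k\hat{\otimes}_R\bar{g}$, which is a quasi-isomorphism because $\bar{g}$ is one between complexes of flat $R$-modules; hence Lemma \ref{lemma:1.1}(ii) upgrades $h$ to an isomorphism in $\mathcal{D}^-(R'\A)$. Replacing $f_1$ by $f_1':=f_1\circ h^{-1}$ then yields $\bar{f}_1'=\bar{f}_2$. Because the representatives have abstractly free terms, $M^\bullet$ and $N^\bullet$ are genuinely the termwise fiber products $M_1^\bullet\times_{\bar{M}^\bullet}M_2^\bullet$ and $N_1^\bullet\times_{\bar{N}^\bullet}N_2^\bullet$ in $\mathcal{C}^-(S\A)$, so the matching pair $(f_1',f_2)$ glues to a quasi-isomorphism $F\colon M^\bullet\to N^\bullet$ in $\mathcal{C}^-(S\A)$; a final application of Claim \ref{claim3} over $S$ modifies $F$ to be compatible with $\phi$ and $\psi$, proving $[M^\bullet,\phi]=[N^\bullet,\psi]$ in $\Fun_{V^\bullet}(S)$. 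The hardest step, I expect, is the scalar-plus-correction construction of $h$ together with the verification that it is an honest isomorphism and not merely an isomorphism modulo perfect complexes: Claim \ref{claim4} is what pins $\bar{g}$ down to a scalar modulo perfect, Claim \ref{claim2} is what promotes the residual $\bar{\sigma}$ to a genuine morphism over $R'$, and Lemma \ref{lemma:1.1}(ii) is what closes the loop so that $h$ and hence $f_1'$ are invertible in $\mathcal{D}^-(R'\A)$.
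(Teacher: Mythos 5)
Your proposal is correct and takes essentially the same route as the paper's proof: reduce the two isomorphisms over $R'$ to an automorphism $\bar{g}$ of the base change over $R$, use Claim \ref{claim4} to write $\bar{g}$ as a scalar plus a morphism factoring through a perfect complex, lift that correction via Claim \ref{claim2}, conclude via Lemma \ref{lemma:1.1}(ii) that the resulting endomorphism over $R'$ is an isomorphism, adjust one of the two given isomorphisms so the reductions to $R$ agree, glue over $R'\times_R R'$, and finish with Claim \ref{claim3}. The only deviations are cosmetic: you correct $f_1$ by $h^{-1}$ where the paper corrects $f_2$ by $g_{R'}$ (avoiding any inverse), and your observation that $r$ is a unit lifting to a unit $r'$ is not needed.
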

\begin{proof}
Let $\alpha_1:R'\times_RR'\to R'$ (resp. $\alpha_2:R'\times_RR'\to R'$) be the natural surjection onto the first (resp. second) component of $R'\times_RR'$ such that $\pi\circ \alpha_1=\pi\circ \alpha_2$. Let $(M_1^\bullet, \phi_1)$ and $(M_2^\bullet, \phi_2)$ be quasi-lifts of $V^\bullet$ over $R'\times_R R'$. 
Suppose there are isomorphisms $f_i: R'\hat{\otimes}_{R'\times_RR', \alpha_i}^\mathbf{L}M_1^\bullet\to R'\hat{\otimes}_{R'\times_RR', \alpha_i}^\mathbf{L}M_2^\bullet$ in $\mathcal{D}^-(R'\A)$ for $i=1,2$. Then $g_R=(R\hat{\otimes}_{R',\pi}^\mathbf{L}f_2)^{-1}\circ (R\hat{\otimes}_{R,\pi}^\mathbf{L}f_1)$ is an automorphism of $M_R^\bullet=R\hat{\otimes}_{R'\times_RR', \pi\circ \alpha_2}^\mathbf{L} M_1^\bullet$ in $\mathcal{D}^-(R\A)$.  Then $g_R$ defines a morphism in $\mathcal{D}^-_{\mathfrak{perf}}(R\A)$ that can be described by the diagram
\begin{equation*}
\begindc{\commdiag}[\Size]
\obj(1,-1)[p0]{$M_R^\bullet$}
\obj(0,0)[p1]{$M_R^\bullet$}
\obj(2,0)[p2]{$M_R^\bullet$}
\mor{p2}{p0}{$1_{M_R^\bullet}$}[1,0]
\mor{p1}{p0}{$g_R$}[-1,0]
\enddc
\end{equation*}
Since $M_R^\bullet$ defines a quasi-lift of $V^\bullet$ over $R$, it follows from Claim \ref{claim4} that there exists $r\in R$ such that the following diagram in $\mathcal{D}^-(R\A)$ is commutative:
\begin{equation*}\label{diagcomm4}
\begindc{\commdiag}[\Size]
\obj(3,3)[p0]{$M_R^\bullet$}
\obj(2,2)[p1]{$M_R^\bullet$}
\obj(3,2)[p2]{$M_R''^\bullet$}
\obj(4,2)[p3]{$M_R^\bullet$}
\obj(3,1)[p4]{$M_R^\bullet$}
\mor{p1}{p0}{$r\cdot 1_{M^\bullet_R}$}
\mor{p1}{p2}{}
\mor{p0}{p2}{}
\mor{p3}{p2}{$v_R$}[-1,0]
\mor{p1}{p4}{$g_R$}[-1,0]
\mor{p3}{p4}{$1_{M_R^\bullet}$}
\mor{p3}{p0}{$1_{M_R^\bullet}$}[-1,0]
\mor{p4}{p2}{}
\enddc
\end{equation*}
where the mapping cone of $v_R$ is a perfect complex over $R$. Since $v_R\circ (r\cdot 1_{M^\bullet_R}-g_R)=0$, it follows that $r\cdot 1_{M^\bullet_R}-g_R$ factors through a perfect complex over $R\A$. Let $M_{R'}^\bullet = R'\hat{\otimes}_{R'\times_RR', \alpha_2}^\mathbf{L}M_1^\bullet$. Note that $M^\bullet_R=R\hat{\otimes}_{R',\pi}^\mathbf{L}M^\bullet_{R'}$. By Remark \ref{rem:1.3} we can assume that $V^\bullet$ is a bounded above complex of abstractly free finitely generated $\A$-modules, that $M^\bullet_{R'}$ is a bounded above complex of abstractly free finitely generated $R'\A$-modules, and that $g_R$ is given by a quasi-isomorphism in $\mathcal{C}^-(R\A)$ that is surjective on terms.
By Claim \ref{claim2}, there exists a morphism $\sigma_{R'}\in \Hom_{\mathcal{K}^-(R'\A)}(M_{R'}^\bullet, M_{R'}^\bullet)$ that factors through a perfect complex over $R'\A$ such that $r\cdot 1_{M_R^\bullet}-g_R=R\hat{\otimes}_{R',\pi}\sigma_{R'}$. Let $r'\in R'$ such that $\pi(r')=r$, and let $g_{R'} = r'\cdot 1_{M_{R'}^\bullet} - \sigma_{R'}$. Note that $R\hat{\otimes}_{R', \pi}(r'\cdot 1_{M_{R'}^\bullet})=r\cdot 1_{M_R^\bullet}$.  Therefore  
\begin{align*}
\k\hat{\otimes}_{R'}g_{R'}&=\k\hat{\otimes}_R(R\hat{\otimes}_{R', \pi} g_{R'})= \k\hat{\otimes}_R(R\hat{\otimes}_{R', \pi}(r'\cdot 1_{M_{R'}^\bullet}-\sigma_{R'}))=\k\hat{\otimes}_R(r\cdot 1_{M_{R}^\bullet}-r\cdot 1_{M_{R}^\bullet}+g_R)=\k\hat{\otimes}_Rg_R,
\end{align*}
which implies that $\k\hat{\otimes}_{R'}g_{R'}$ is a quasi-isomorphism in $\mathcal{C}^-(\A)$. By Lemma \ref{lemma:1.1}(ii), it follows that $g_{R'}$ defines an automorphism of $M_{R'}^\bullet$ in $\mathcal{D}^-(R'\A)$. By replacing $f_2$ with $f_2\circ g_{R'}$, we have $R\hat{\otimes}_{R', \pi}^\mathbf{L}f_1=R\hat{\otimes}_{R', \pi}^\mathbf{L}f_2$. Therefore the pair $(f_1,f_2)$ defines an isomorphism $f: M_1^\bullet \to M_2^\bullet$ in $\mathcal{D}^-((R'\times_RR')\A)$. By Claim \ref{claim3}, we have that $[M_1^\bullet, \phi_1]=[M_2^\bullet, \phi_2]$ in $\Fun_{V^\bullet}(R'\times_RR')$, which proves Claim \ref{claim5}.
\end{proof}
Since the deformation functor $\hat{\Fun}_{V^\bullet}$ is continuous by \cite[Prop. 2.4.4]{blehervelez2}, Proposition \ref{prop2} follows from \cite[\S 2]{sch} together with \cite[Prop. 2.4.1 \& 2.4.3]{blehervelez2}  and Claim \ref{claim5}. This finishes the proof of Proposition \ref{prop2}.
\end{proof}

\begin{remark}\label{rem:2.7}
Let $P^\bullet$ be an object in $\mathcal{K}^b(\A\textup{-proj})$.  Let $R$ be an Artinian object in $\Ca$, let $\iota_R:\k\to R$ be the unique morphism in $\Ca$ that endows $R$ with a $\k$-algebra structure, and let $\pi_R:R\to \k$ be the natural projection in $\Ca$. Then $\pi_R\circ \iota_R=1_\k$ and $P^\bullet_R=R\hat{\otimes}_\k P^\bullet$ is an object in $\mathcal{K}^b(R\A\textup{-proj})$. In particular, $(P^\bullet_R, \pi_{R, P^\bullet})$ is a quasi-lift of $P^\bullet$ over $R$ , where $\pi_{R, P^\bullet}: \k\hat{\otimes}_{R, \pi_R}(R\hat{\otimes}_{\k, \iota_R} P^\bullet)\to P^\bullet$ is the natural isomorphism in $\mathcal{K}^-(R\A)$.   
\end{remark}
We obtain the following result, which is a version of Claim 6 within the proof of \cite[Thm. 2.6]{blehervelez} (resp. Claim 2.3.6 within the proof of \cite[Thm. 2.2]{velez2}) for derived categories. 

\begin{lemma}\label{lemma:2.8}
Let $R$ be an Artinian object in $\Ca$ and let $P^\bullet_R$ be a perfect complex over $R\A$. Suppose there is a commutative diagram of triangles in $\mathcal{K}^-(R\A)$
\begin{equation}\label{triagproyec}
\begindc{\commdiag}[\Sized]
\obj(0,2)[p0]{$P^\bullet_R$}
\obj(4,2)[p1]{$A^\bullet$}
\obj(8,2)[p2]{$C^\bullet$}
\obj(12,2)[p3]{$TP^\bullet_R$}
\obj(0,0)[q0]{$P^\bullet$}
\obj(4,0)[q1]{$\k\hat{\otimes}_RA^\bullet$}
\obj(8,0)[q2]{$\k\hat{\otimes}_RC^\bullet$}
\obj(12,0)[q3]{$TP^\bullet$}
\mor{p0}{p1}{$g$}
\mor{p1}{p2}{$h$}
\mor{p2}{p3}{}
\mor{q0}{q1}{$\bar{g}$}
\mor{q1}{q2}{$\bar{h}$}
\mor{q2}{q3}{}
\mor{p0}{q0}{}
\mor{p1}{q1}{}
\mor{p2}{q2}{}
\mor{p3}{q3}{}
\enddc
\end{equation}   
where the terms of $A^\bullet$ and $C^\bullet$ are abstractly free finitely generated $R\A$-modules, the complexes $\k\hat{\otimes}_RA^\bullet$ and $\k\hat{\otimes}_RC^\bullet$ are isomorphic in $\mathcal{D}^-(\A)$ to bounded complexes with terms in $\A\textup{-GProj}$, and that the bottom row arises by tensoring the top row with $\k$ over $R$ and by using the isomorphism $\pi_{R, P^\bullet}:\k\hat{\otimes}_{R, \pi_R}P^\bullet_R\to P^\bullet$ in $\mathcal{K}^-(R\A)$. Then the morphism $g$ in the top row of (\ref{triagproyec}) is a section in $\mathcal{K}^-(R\A)$. 
\end{lemma}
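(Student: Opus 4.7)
The strategy is to first split the bottom triangle over $\k$ by an $\Ext^1$-vanishing argument, then lift the resulting splitting morphism along the tower of small extensions connecting $\k$ to the Artinian ring $R$, and finally correct the lift by a Nakayama/Lemma \ref{lemma:1.1}(ii) argument so that it becomes a genuine section of $g$.

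For Step 1, the connecting morphism $\bar{\partial}\colon \k\hat{\otimes}_R C^\bullet \to TP^\bullet$ of the bottom triangle lives in $\Hom_{\mathcal{K}^-(\A)}(\k\hat{\otimes}_R C^\bullet, TP^\bullet)$. Since $C^\bullet$ has abstractly free $R\A$-terms, $\k\hat{\otimes}_R C^\bullet$ is a bounded above complex of free $\A$-modules, hence K-projective, so
\begin{equation*}
\Hom_{\mathcal{K}^-(\A)}(\k\hat{\otimes}_R C^\bullet, TP^\bullet) \;=\; \Hom_{\mathcal{D}^-(\A)}(\k\hat{\otimes}_R C^\bullet, TP^\bullet) \;=\; \Ext^1_{\mathcal{D}^-(\A)}(\k\hat{\otimes}_R C^\bullet, P^\bullet).
\end{equation*}
By hypothesis $\k\hat{\otimes}_R C^\bullet$ is isomorphic in $\mathcal{D}^-(\A)$ to a bounded complex with terms in $\A\textup{-GProj}$, and $P^\bullet$ is a perfect complex, so Lemma \ref{lemma:1.3}(ii) forces $\bar{\partial}=0$. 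The bottom triangle therefore splits, producing $\bar{r}\in \Hom_{\mathcal{K}^-(\A)}(\k\hat{\otimes}_R A^\bullet, P^\bullet)$ with $\bar{r}\circ \bar{g}=1_{P^\bullet}$.

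For Step 2, I would factor $R\to \k$ as a finite chain of small extensions $\pi\colon R'\to R_0$ as in Remark \ref{rem2} and lift $\bar{r}$ stagewise. At each stage, the short exact sequence $0\to tP_{R'}^\bullet \to P_{R'}^\bullet \to P_{R_0}^\bullet \to 0$ in $\mathcal{C}^-(R'\A)$, together with the fact that $A'^\bullet$ has abstractly free (hence projective) terms, produces—just as in the proof of Claim \ref{claim1}—a long exact sequence whose relevant obstruction lies in
\begin{equation*}
\Ext^1_{\mathcal{D}^-(R'\A)}(A'^\bullet, tP_{R'}^\bullet) \;\cong\; \Ext^1_{\mathcal{D}^-(\A)}(\k\hat{\otimes}_{R'} A'^\bullet, P^\bullet),
\end{equation*}
which again vanishes by Lemma \ref{lemma:1.3}(ii) since $\k\hat{\otimes}_{R'} A'^\bullet$ is isomorphic in $\mathcal{D}^-(\A)$ to a bounded complex with Gorenstein projective terms. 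Iterating gives $r\in \Hom_{\mathcal{K}^-(R\A)}(A^\bullet, P_R^\bullet)$ lifting $\bar r$ under the identification $\pi_{R,P^\bullet}$.

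For Step 3, the composite $r\circ g\colon P_R^\bullet \to P_R^\bullet$ reduces modulo $\m_R$ to $\bar{r}\circ \bar{g}=1_{P^\bullet}$ in $\mathcal{K}^-(\A)$, so on a chain-map representative its reduction is a quasi-isomorphism in $\mathcal{C}^-(\A)$. The terms of $P_R^\bullet$ are finitely generated projective $R\A$-modules, hence flat over $R$, so the argument of Lemma \ref{lemma:1.1}(ii) via Lemma \ref{lemma:1.0} upgrades this to the statement that $r\circ g$ is a quasi-isomorphism in $\mathcal{C}^-(R\A)$. Because $P_R^\bullet$ is a bounded complex of projective $R\A$-modules, $r\circ g$ is thereby an isomorphism in $\mathcal{K}^-(R\A)$, and setting $\tilde{r}=(r\circ g)^{-1}\circ r$ gives $\tilde{r}\circ g = 1_{P_R^\bullet}$ as desired. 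The main delicacy lies in Step 2: one must carry the identification $\pi_{R,P^\bullet}$ through the tower of small extensions, ensuring at each stage that the lift constructed actually reduces to the previous one (not merely to some unrelated morphism), so that the iterated lift really refines $\bar{r}$ and the Nakayama-type correction in Step 3 can be applied.
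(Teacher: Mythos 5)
Your proposal is correct and follows essentially the same route as the paper's proof: the vanishing of $\Hom_{\mathcal{K}^-(\A)}(\k\hat{\otimes}_R C^\bullet, TP^\bullet)$ via Lemma \ref{lemma:1.3}(ii) to split the special fibre, a lift of the resulting retraction over $R$ (the paper simply invokes Claim \ref{claim1}, whose small-extension proof is what you unroll in your Step 2), and Lemma \ref{lemma:1.1}(ii) to upgrade $r\circ g$ to an isomorphism between bounded complexes of projectives, whence $g$ is a section.
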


\begin{proof}
Without losing generality, assume that %both $\k\hat{\otimes}_RA^\bullet$ and $\k\hat{\otimes}_RC^\bullet$ are bounded complexes with terms in $\mathrm{CM}(\A)$ and that 
$P^\bullet_R$ is an object in $\mathcal{K}^b(R\A\textup{-proj})$.  Note that $P^\bullet = \k\hat{\otimes}_R P^\bullet_R$ is then an object in $\mathcal{K}^b(\A\textup{-proj})$. Assume that $\k\hat{\otimes}_RA^\bullet$ and $\k\hat{\otimes}_RC^\bullet$ are respectively isomorphic to $W$ and $W'$ in $\mathcal{D}^-(\A)$, where $W$ and $W'$ are {\bf bounded} complexes in $\mathcal{C}^-(\A)$ whose terms are in $\A\textup{-GProj}$.  It follows from Lemma \ref{lemma:1.3}(ii) that 
\begin{align*}
%\Hom_{\mathcal{K}^-(\A)}(\k\hat{\otimes}_RA^\bullet, TP^\bullet)&=\Hom_{\mathcal{D}^-(\A)}(\k\hat{\otimes}_RA^\bullet, TP^\bullet)=\Ext_{\mathcal{D}^-(\A)}^1(W, P^\bullet)=0, \text{and}\\
\Hom_{\mathcal{K}^-(\A)}(\k\hat{\otimes}_RC^\bullet, TP^\bullet)&=\Hom_{\mathcal{D}^-(\A)}(\k\hat{\otimes}_RC^\bullet, TP^\bullet)=\Ext_{\mathcal{D}^-(\A)}^1(W, P^\bullet)=0.
\end{align*}
In particular, it follows from \cite[Prop. I.6.1]{hartshorne} that there exists a morphism $\bar{w}: \k\hat{\otimes}A^\bullet \to P^\bullet$ in $\mathcal{K}^-(\A)$ such that $\bar{w}\circ \bar{g}=1_{P^\bullet}$. %By applying $\Hom_{\mathcal{D}^-(\A)}(-,P^\bullet)$ to the bottom row of (\ref{triagproyec}) and by using that all the terms of $\k\hat{\otimes}_RC^\bullet$ are in $\mathrm{CM}(\A)$ together with Lemma \ref{lemma:1.3}(ii) and \cite[Prop. I.6.1]{hartshorne}, we obtain that there exists a morphism $\bar{w}: \k\hat{\otimes}A^\bullet \to P^\bullet$ in $\mathcal{K}^-(\A)$ such that $\bar{w}\circ \bar{g}=1_{P^\bullet}$. 
By Claim \ref{claim1}, there exists a morphism $w: A^\bullet \to P^\bullet_R$ in $\mathcal{K}^-(R\A)$ such that $\k\hat{\otimes}_Rw=\bar{w}$. Hence $\k\hat{\otimes}_R(w\circ g)=\bar{w}\circ \bar{g} = 1_{P^\bullet}$. By Lemma \ref{lemma:1.1}, we obtain that $w\circ g$ is a quasi-isomorphism, which implies that $g$ induces a section in $\mathcal{D}^-(R\A)$. Since the terms of $P_R^\bullet$ and $A^\bullet$ are finitely generated projective $R\A$-modules, we can assume that $g$ is a section in $\mathcal{K}^-(R\A)$. This finishes the proof of Lemma \ref{lemma:2.8}.   
\end{proof}

%\section{Versal Deformation Rings and Perfect Complexes}

We next use Lemma \ref{lemma:2.8} to prove the following result, which is a version of \cite[Lemma 3.2.2]{blehervelez2} and \cite[Lemma 2.2]{bekkert-giraldo-velez} for derived categories. Although its proof can be obtained from that of \cite[Lemma 3.2.2]{blehervelez2} by easily adjusting the arguments to the language of derived categories, we decided to include it for the convenience of the reader.

\begin{lemma}\label{lemma:2.10}
Suppose that $V^\bullet$ is a {\bf bounded} complex in $\mathcal{C}^-(\A)$ such that all its terms are in $\A\textup{-GProj}$, and let $P^\bullet$ be a perfect complex over $\A$. Then $P^\bullet$ has a universal deformation ring $R(\A,P^\bullet)$ isomorphic to $\k$, and the versal deformation ring $R(\A,V^\bullet\oplus P^\bullet)$ is isomorphic to the versal deformation ring $R(\A, V^\bullet)$. Moreover, $R(\A, V^\bullet\oplus P^\bullet)$ is universal if and only if $R(\A,V^\bullet)$ is universal.  
\end{lemma}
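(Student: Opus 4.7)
The strategy is to address the two assertions in parallel: the universality of $R(\A,P^\bullet)\cong\k$ will follow from showing every quasi-lift of a perfect complex is isomorphic to the trivial one, and the isomorphism $R(\A,V^\bullet\oplus P^\bullet)\cong R(\A,V^\bullet)$ will follow from a natural bijection of deformation functors produced by splitting off a perfect summand via Lemma \ref{lemma:2.8}.

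For the first assertion, I would induct on the length of an Artinian object $R$ in $\Ca$ through small extensions $\pi\colon R\to R_0$ as in Remark \ref{rem2}, with base case $R=\k$ trivial. Given a quasi-lift $(M^\bullet,\phi)$ of $P^\bullet$ over $R$, the trivial lift $(P^\bullet_R,\pi_{R,P^\bullet})$ of Remark \ref{rem:2.7} is available, and the induction provides a quasi-isomorphism $\bar\psi\colon R_0\hat{\otimes}_{R,\pi}M^\bullet\to P^\bullet_{R_0}$ compatible with the residue isomorphisms. After replacing $M^\bullet$ via Remark \ref{rem:1.3} by a bounded above complex of abstractly free finitely generated $R\A$-modules, the short exact sequence $0\to tP^\bullet_R\to P^\bullet_R\to P^\bullet_{R_0}\to 0$ together with Corollary \ref{cor:1.6} (vanishing of $\Ext^1_{\mathcal{D}^-(\A)}(P^\bullet,P^\bullet)$) show that the obstruction to lifting $\bar\psi$ to $\psi\colon M^\bullet\to P^\bullet_R$ vanishes; Lemma \ref{lemma:1.1}(ii) then makes $\psi$ a quasi-isomorphism in $\mathcal{C}^-(R\A)$. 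Thus $\Def_\A(P^\bullet,R)$ is a singleton for every Artinian $R\in\Ca$, which by continuity of $\hat{\Fun}_{P^\bullet}$ gives $R(\A,P^\bullet)\cong\k$.

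For the second assertion, I would construct the natural transformation $\hat{\Fun}_{V^\bullet}\to\hat{\Fun}_{V^\bullet\oplus P^\bullet}$ sending $[M_V^\bullet,\phi_V]$ to $[M_V^\bullet\oplus(R\hat{\otimes}_\k P^\bullet),\phi_V\oplus\pi_{R,P^\bullet}]$. Well-definedness is immediate, and injectivity follows from projecting off the perfect summand; the content lies in surjectivity. Given a quasi-lift $(M^\bullet,\phi)$ of $V^\bullet\oplus P^\bullet$ over an Artinian $R$, use Remark \ref{rem:1.3} to replace $M^\bullet$ by a bounded above complex of abstractly free finitely generated $R\A$-modules and $\phi$ by a surjective quasi-isomorphism. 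I would then lift the composition $P^\bullet\hookrightarrow V^\bullet\oplus P^\bullet\xrightarrow{\phi^{-1}}\k\hat{\otimes}_RM^\bullet$ to a morphism $g\colon P^\bullet_R\to M^\bullet$ in $\mathcal{K}^-(R\A)$ by induction up a tower of small extensions; the obstruction lies in $\Ext^1_{\mathcal{D}^-(R\A)}(P^\bullet_R,tM^\bullet)$, which vanishes by Lemma \ref{lemma:1.7} because $P^\bullet_R$ is perfect and $tM^\bullet\cong\k\hat{\otimes}_RM^\bullet$ can be truncated to a bounded complex of finitely generated $\A$-modules. Completing $g$ to a distinguished triangle $P^\bullet_R\xrightarrow{g}M^\bullet\to C^\bullet\to TP^\bullet_R$ in $\mathcal{K}^-(R\A)$ and tensoring with $\k$ over $R$ recovers the split triangle $P^\bullet\to V^\bullet\oplus P^\bullet\to V^\bullet\to TP^\bullet$ in $\mathcal{D}^-(\A)$, so the terms of $C^\bullet$ are abstractly free and $\k\hat{\otimes}_RC^\bullet\cong V^\bullet$, a bounded complex whose terms lie in $\A\textup{-GProj}$. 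Lemma \ref{lemma:2.8} then forces $g$ to be a section in $\mathcal{K}^-(R\A)$, giving $M^\bullet\cong P^\bullet_R\oplus C^\bullet$, and $(C^\bullet,\phi_V)$ is the required quasi-lift of $V^\bullet$.

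The main obstacle I anticipate is handling the compatibility between $g$ and $\phi$ during the inductive lifting, since different lifts of $P^\bullet\hookrightarrow V^\bullet\oplus P^\bullet$ differ by morphisms that factor through perfect complexes and must be absorbed in the style of Claims \ref{claim1}--\ref{claim3}. Once the natural isomorphism $\hat{\Fun}_{V^\bullet}\cong\hat{\Fun}_{V^\bullet\oplus P^\bullet}$ is established, both functors share the same representability properties, which yields $R(\A,V^\bullet\oplus P^\bullet)\cong R(\A,V^\bullet)$ in $\hat{\Ca}$ and the equivalence of their universality.
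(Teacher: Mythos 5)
Your proposal is correct and follows essentially the same route as the paper: the paper handles $R(\A,P^\bullet)\cong\k$ by citing the criterion $\Ext^1_{\mathcal{D}^-(\A)}(P^\bullet,P^\bullet)=0$ (Corollary \ref{cor:1.6} plus \cite[Thm. 2.1.12]{blehervelez2}) rather than your direct small-extension induction, and then proves bijectivity of the same map $[M^\bullet,\phi]\mapsto[M^\bullet\oplus P^\bullet_R,\phi\oplus\pi_{R,P^\bullet}]$ with the same injectivity argument and the same surjectivity mechanism (lift $P^\bullet_R\to A^\bullet$ using Lemma \ref{lemma:1.7} applied to $0\to\mathfrak{m}_RA^\bullet\to A^\bullet\to\k\hat{\otimes}_RA^\bullet\to 0$ in one step rather than a tower of small extensions, take the cone, apply Lemma \ref{lemma:2.8}). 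The compatibility issue you flag at the end is real but sits in the retraction rather than in the choice of $g$: the paper constructs $\xi$ by (TR3) and then corrects the retraction $k$ to $k_1=k-g\circ\lambda$, with $\lambda$ supplied by Claim \ref{claim1}, to kill the off-diagonal term $p_{P^\bullet}\circ\varphi\circ(\k\hat{\otimes}_Rk)$ — exactly the Claim~\ref{claim1}-style absorption you anticipated.
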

\begin{proof}
By Remark \ref{rem:1.3}, we can assume $V^\bullet$ to be a bounded above complex of abstractly free finitely generated $\A$-modules, and that $P^\bullet$ is an object in $\mathcal{K}^b(\A\textup{-proj})$. It follows from Corollary \ref{cor:1.6} that $\Ext_{\mathcal{D}^-(\A)}^1(P^\bullet, P^\bullet)=0$, which implies by \cite[Thm. 2.1.12]{blehervelez2} that the versal deformation ring $R(\A,P^\bullet)$ is isomorphic to $\k$. For each object $R$ in $\hat{\Ca}$, let $\iota_R:\k\to R$ be the unique morphism in $\hat{\Ca}$ that endows $R$ with a $\k$-algebra structure, and let $\pi_R: R\to \k$ be the morphism of $R$ to its residue field $\k$ in $\hat{\Ca}$. In particular, $\pi_R\circ \iota_R=1_\k$. This means that $\k$ is the universal deformation ring of $P^\bullet$. 

Let $R$ be a Noetherian object in $\hat{\Ca}$. Define a map 
\begin{align}\label{mapdef}
\Def_\A(V^\bullet, R)&\to \Def_\A(V^\bullet\oplus P^\bullet, R)\\
[M^\bullet, \phi]&\mapsto [M^\bullet\oplus P^\bullet_R, \phi\oplus \pi_{R,P^\bullet}]\notag
\end{align}
where $P^\bullet_R=R\hat{\otimes}_{\k,\iota_R}P^\bullet$ and $\pi_{R,P^\bullet}: \k\hat{\otimes}_{R, \pi_R} P^\bullet_R\to P^\bullet$ are as in Remark \ref{rem:2.7}. Since for all $\alpha: R\to R'$ in $\Ca$, $\alpha\circ \iota_{R'}$ and $\pi_{R'}\circ \alpha=\pi_R$, it follows that the map (\ref{mapdef}) is natural with respect to morphisms in $\hat{\Ca}$. Due to the continuity of $\hat{\Fun}_{V^\bullet}$ and $\hat{\Fun}_{V^\bullet\oplus P^\bullet}$, in order to prove Lemma \ref{lemma:2.10}, it suffices to show that the map (\ref{mapdef}) is bijective for all Artinian objects $R$ in $\Ca$.  

Let $R$ be an Artinian object in $\Ca$ be fixed but arbitrary. Suppose that $(M^\bullet, \phi)$ and $(M'^\bullet, \phi')$ are two quasi-lifts of $V$  over $R$ such that there exists an isomorphism in $\mathcal{D}^-(R\A)$ 
\begin{equation*}
f=\begin{pmatrix} f_{11} & f_{12}\\f_{21} & f_{22}\end{pmatrix}: M^\bullet\oplus P^\bullet_R\to M'^\bullet\oplus P^\bullet_R
\end{equation*}
with $(\phi'\oplus \pi_{R, P^\bullet})\circ(\k\hat{\otimes}_R^\mathbf{L}f)=\phi\oplus\pi_{R, P^\bullet}$. In particular, $\phi'\circ(\k\hat{\otimes}_R^\mathbf{L}f_{11})=\phi$ and $\k\hat{\otimes}_Rf_{22}=1_{P^\bullet}$. By Remark \ref{lemma:1.3}, we can assume that $V^\bullet$ is a bounded above complex of abstractly free finitely generated $\A$-modules, that both $M^\bullet$ and $M'^\bullet$ are bounded above complexes of abstractly free finitely generated $R\A$-modules, that $f$ is given by a quasi-isomorphism in $\mathcal{C}^-(R\A)$ that is surjective on terms, and that both $\phi$ and $\phi'$ are given by quasi-isomorphisms in $\mathcal{C}^-(\A)$ that are also surjective on terms. By Lemma \ref{lemma:1.1} (ii), it follows that both $f_{11}$ and $f_{22}$ are both quasi-isomorphisms in $\mathcal{C}^-(R\A)$, which induce isomorphisms in $\mathcal{D}^-(R\A)$. Therefore, $[M^\bullet, \phi]= [M'^\bullet, \phi']$ and the map (\ref{mapdef}) is injective.  

We next show that the map (\ref{mapdef}) is surjective. Let $(A^\bullet, \varphi)$ be a lift of $V^\bullet\oplus P^\bullet$ over $R$. As before, we can assume that $A^\bullet$ is a bounded above complex of abstractly free finitely generated $R\A$-modules and that $\varphi$ is given by a quasi-isomorphism in $\mathcal{C}^-(\A)$, which is surjective on terms. Consider $\tau_{A^\bullet}: A^\bullet \to \k\hat{\otimes}_R A^\bullet$ be the natural morphism in $\mathcal{C}^-(R\A)$ that is surjective on terms. Note that $\k\hat{\otimes}_RA^\bullet$ is isomorphic in $\mathcal{D}^-(\A)$ to a {\bf bounded} complex in $\mathcal{C}^-(\A)$ whose terms are all in $\A\textup{-GProj}$. Thus consider the short exact sequence of complexes in $\mathcal{C}^-(R\A)$
\begin{equation*}\label{seqA}
0\to \mathfrak{m}_RA^\bullet\to A^\bullet \xrightarrow{\tau_{A^\bullet}} \k\hat{\otimes}_R A^\bullet\to 0,
\end{equation*}
where $\mathfrak{m}_R$ denotes the unique maximal ideal of $R$. Since $A^\bullet$ has finite $R$-tor dimension, following Remark \ref{remark:1.1}, we can assume that $A^\bullet$ is a {\bf bounded} complex of finitely generated $R\A$-modules which are free over $R$. Hence, we can assume that $\mathfrak{m}_RA^\bullet$ is a {\bf bounded} complex in $\mathcal{C}^-(R\A)$. By \cite[Prop. I.6.1]{hartshorne}, we obtain a long exact sequence of abelian groups
\begin{equation*}
\Hom_{\mathcal{K}^-(R\A)}(P^\bullet_R, \mathfrak{m}_RA^\bullet)\to \Hom_{\mathcal{K}^-(R\A)}(P^\bullet_R, A^\bullet)\xrightarrow{(\tau_{A^\bullet})_\ast} \Hom_{\mathcal{K}^-(R\A)}(P^\bullet_R, \k\hat{\otimes}_R A^\bullet)\to \Ext^1_{\mathcal{D}^-(R\A)}(P^\bullet_R, \mathfrak{m}_RA^\bullet),
\end{equation*}
%where $\Hom=\Hom_{\mathcal{D}^-(R\A)}$ and $\Ext^1=\Ext^1_{\mathcal{D}^-(R\A)}$. 
By Lemma \ref{lemma:1.7}, we get $\Ext^1_{\mathcal{D}^-(R\A)}(P^\bullet, \mathfrak{m}_RA^\bullet)=0$, and thus there exists a morphism $g: P^\bullet_R\to A^\bullet$ in $\mathcal{K}^-(R\A)$ such that $\tau_{A^\bullet}\circ g = (\tau_{A^\bullet})_\ast(g)= \varphi^{-1}\circ \begin{pmatrix} 0\\\textup{pr}\end{pmatrix}$, where $\textup{pr}$ is the composition of the natural morphism $\tau_{P^\bullet_R}: P^\bullet_R\to \k\hat{\otimes}_R P^\bullet_R$ in $\mathcal{C}^-(R\A)$ with the isomorphism $\pi_{R, P^\bullet}$. Consider the distinguished triangle in $\mathcal{K}^-(R\A)$
\begin{equation}\label{triag:C}
P^\bullet_R\xrightarrow{g} A^\bullet \xrightarrow{h} C^\bullet \to TP^\bullet_R,
\end{equation}
where $C^\bullet= C^\bullet_g$ is the mapping cone of $g$. Note in particular that all the terms of $C^\bullet$ are all abstractly free finitely generated $R\A$-modules. Tensoring (\ref{triag:C}) with $\k$ over $R$ yields a commutative diagram of triangles in $\mathcal{K}^-(R\A)$ as in (\ref{triagproyec}). Note in particular that by the definition of mapping cones, $\k\hat{\otimes}_RC^\bullet$ is isomorphic to a {\bf bounded} complex in $\mathcal{C}^-(\A)$ whose terms are all in $\A\textup{-GProj}$. By Lemma \ref{lemma:2.8}, we obtain that $g$ is a section in $\mathcal{K}^-(R\A)$. Following \cite[Lemma 1.4]{happel}, we obtain that $h:A^\bullet\to C^\bullet$ is a retraction in $\mathcal{K}^-(R\A)$, i.e., there exists a morphism $k: C^\bullet \to A^\bullet$ in $\mathcal{K}^-(R\A)$ such that $h\circ k=1_{C^\bullet}$.  By tensoring with $\k$ over $R$, we obtain a retraction $\bar{k}=\k\hat{\otimes}_Rk: \k\hat{\otimes}_RC^\bullet \to \k\hat{\otimes}_R A^\bullet$ of $\bar{h}$ in $\mathcal{K}^-(\A)$ . Consider the morphism $(k, g): C^\bullet\oplus P^\bullet_R\to T^\bullet$ in $\mathcal{K}^-(R\A)$. Let $\iota_{P^\bullet}: P^\bullet \to V^\bullet \oplus P^\bullet$ (resp. $p_{V^\bullet}: V^\bullet\oplus P^\bullet\to V^\bullet$) the natural morphism in $\mathcal{C}^-(\A)$ which is injective (resp. surjective) on terms. Since $\varphi\circ \bar{g}=\varphi\circ (\k\hat{\otimes}_Rg)\circ \pi_{R, P^\bullet}^{-1}=\iota_{P^\bullet}$, it follows from the axiom of triangulated categories (TR3) (see e.g. \cite[\S I.1]{hartshorne}) that there exists a morphism $\xi: \k\hat{\otimes}_RC^\bullet \to V^\bullet$ such that the following diagram of exact triangles in $\mathcal{K}^-(\A)$ is commutative:

\begin{equation*}
\begindc{\commdiag}[\Sized]
\obj(0,2)[p0]{$P^\bullet$}
\obj(4,2)[p1]{$\k\hat{\otimes}_RA^\bullet$}
\obj(8,2)[p2]{$\k\hat{\otimes}_RC^\bullet$}
\obj(12,2)[p3]{$TP^\bullet_R$}
\obj(0,0)[q0]{$P^\bullet$}
\obj(4,0)[q1]{$V^\bullet\oplus P^\bullet$}
\obj(8,0)[q2]{$V^\bullet$}
\obj(12,0)[q3]{$TP^\bullet$}
\mor{p0}{p1}{$\bar{g}$}
\mor{p1}{p2}{$\bar{h}$}
\mor{p2}{p3}{}
\mor{q0}{q1}{$\iota_{P^\bullet}$}
\mor{q1}{q2}{$p_{V^\bullet}$}
\mor{q2}{q3}{}
\mor{p0}{q0}{$=$}
\mor{p1}{q1}{$\varphi$}
\mor{p2}{q2}{$\xi$}
\mor{p3}{q3}{$=$}
\enddc
\end{equation*} 

Since $\varphi$ is assumed to be an isomorphism in $\mathcal{K}^-(\A)$, it follows from \cite[Prop. I.1.1 (c)]{hartshorne} that $\xi$ is also an isomorphism in $\mathcal{K}^-(\A)$. By letting $p_{P^\bullet}: V^\bullet\oplus P^\bullet\to P^\bullet$ be the natural morphism in $\mathcal{C}^-(\A)$ that is surjective on terms, we obtain
\begin{align}\label{eqns:2.14}
p_{P^\bullet}\circ\varphi\circ(\k\hat{\otimes}_Rg)&=p_{P^\bullet}\circ \iota_{P^\bullet}\circ \pi_{R,P^\bullet}=\pi_{R,P^\bullet},\\
p_{V^\bullet}\circ \varphi\circ (\k\hat{\otimes}_Rg)&=p_{V^\bullet}\circ \iota_{P^\bullet}\circ \pi_{R,P^\bullet}=0,\notag\\
p_{V^\bullet}\circ \varphi\circ (\k\hat{\otimes}_Rk)&=\xi\circ \bar{h}\circ \bar{k}=\xi,\text{ and }\notag\\
p_{P^\bullet}\circ \varphi\circ (\k\hat{\otimes}_Rk)&=p_{P^\bullet}\circ \varphi\circ \bar{k}\notag
\end{align} 
By Claim \ref{claim1}, there exists a morphism $\lambda: C^\bullet \to P^\bullet_R$ such that $\k\hat{\otimes}_R\lambda =\pi_{R, P^\bullet}^{-1}\circ p_{P^\bullet}\circ \varphi\circ\bar{k}$.
If $k_1=k-g\circ \lambda$, then $h\circ k_1=1_{C^\bullet}$ and $\bar{h}\circ (\k\hat{\otimes}_Rk_1)=1_{\k\hat{\otimes}_RC^\bullet}$. Hence, by replacing $k$ for $k_1$ in the identities (\ref{eqns:2.14}), we see that $p_{V^\bullet}\circ \varphi\circ (\k\hat{\otimes}_Rk_1)=\xi$ and that $p_{P^\bullet}\circ \varphi\circ(\k\hat{\otimes}_Rk_1)=p_{P^\bullet}\circ \varphi\circ \bar{k}-p_{P^\bullet}\circ \varphi\circ (\k\hat{\otimes}_Rg)\circ (\k\hat{\otimes}_R\lambda)=0$. It follows that $(k_1,g): C^\bullet \oplus P^\bullet_R\to A^\bullet$ provides an isomorphism in $\mathcal{K}^-(R\A)$ between the quasi-lifts $(C^\bullet \oplus P^\bullet_R, \xi\oplus \pi_{R, P^\bullet})$ and $(A^\bullet, \varphi)$ of $V^\bullet \oplus P^\bullet$ over $R$. Therefore, the map (\ref{mapdef}) is bijective for all Artinian objects $R$ in $\Ca$. This finishes the proof of Lemma \ref{lemma:2.10}.
\end{proof}

\section{Finitely Generated Gorenstein Projective Modules, Singular Equivalences of Morita Type, and Versal Deformation Rings}\label{section4}

Recall that finitely generated Gorenstein projective left $\A$-modules coincide with the left $\A$-modules of Gorenstein dimension zero or the ones that are totally reflexive as discussed in \cite{auslander4} and \cite{avramov2}, respectively (see e.g. \cite[Lemma 2.1.4]{chenxw4}).

We need the following result that summarizes some properties of finitely generated Gorenstein projective left $\A$-modules (see \cite[\S 4]{auslander3} and \cite{avramov2}).

\begin{lemma}\label{lemma:3.1}
Let $V$ be a finitely generated left $\A$-module. 
\begin{enumerate}
\item $V$ is Gorenstein projective with finite projective dimension if and only if $V$ is projective.
\item If $V$ is an object in $\A\textup{-Gproj}$, then for all $j\geq 1$, there exists a finitely generated left $\A$-module $W$ such that $V= \Omega^j W$, where $\Omega$ denotes the syzygy operator.
\end{enumerate}
\end{lemma}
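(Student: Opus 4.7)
The plan is to establish first that $\A\textup{-GProj}$ is closed under syzygies, and then use this to deduce (i) by induction on projective dimension, while for (ii) I will exhibit $W$ explicitly as a cokernel obtained by shifting a complete projective resolution of $V$.

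For the closure step, given $V \in \A\textup{-GProj}$ with complete projective resolution
\[
\cdots \to P^{-2} \xrightarrow{f^{-2}} P^{-1} \xrightarrow{f^{-1}} P^0 \xrightarrow{f^0} P^1 \xrightarrow{f^1} P^2 \to \cdots,
\]
I will observe that $\Omega V \cong \mathrm{coker}\, f^{-1}$, and that the very same sequence, with indices shifted by one, is a complete projective resolution witnessing $\Omega V$ as Gorenstein projective; iterating, each $\Omega^i V$ also lies in $\A\textup{-GProj}$.

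For (i), the ``if'' direction is immediate since projective modules are Gorenstein projective of projective dimension zero. For ``only if'', I induct on the projective dimension $n$ of $V$. The case $n=0$ is tautological. For $n\geq 1$, choose a short exact sequence $0 \to \Omega V \to P_0 \to V \to 0$ with $P_0$ finitely generated projective; then $\Omega V\in \A\textup{-GProj}$ by the closure step, and its projective dimension is at most $n-1$, so by induction $\Omega V$ is projective. Since $V$ is Gorenstein projective, the defining complete resolution gives $\Ext_\A^1(V,Q)=0$ for every projective $\A$-module $Q$ (as recorded in the paragraph just before the lemma); taking $Q=\Omega V$ forces the short exact sequence above to split, so $V$ is projective.

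For (ii), I use the recalled fact that a finitely generated Gorenstein projective module is totally reflexive, so one may choose a complete resolution of $V$ with all $P^i$ finitely generated projective. For $j\geq 1$, set $W = \mathrm{coker}\, f^j$; this is a quotient of the finitely generated module $P^{j+1}$, hence finitely generated. Truncating at $P^{j+1}$ yields a projective resolution
\[
\cdots \to P^{j-1}\xrightarrow{f^{j-1}} P^j\xrightarrow{f^j} P^{j+1}\to W\to 0,
\]
and a direct induction using exactness of the complete resolution identifies $\Omega^i W$ with $\mathrm{coker}\, f^{j-i}$ for $1\leq i\leq j$; specializing to $i=j$ gives $\Omega^j W\cong \mathrm{coker}\, f^0 = V$. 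I do not anticipate any genuinely difficult step; the only real care point is the availability of a complete resolution by \emph{finitely generated} projectives in part (ii), which is precisely what the totally reflexive characterization supplies.
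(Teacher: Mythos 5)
Your argument is correct, but note that the paper does not actually prove this lemma: it is stated as a summary of known facts with citations to Auslander--Bridger and Avramov--Martsinkovsky, so there is no in-text proof to compare against. What you have written is essentially the standard argument from those sources, and it is self-contained given two facts the paper itself records earlier (that $\Ext^i_\A(V,P)=0$ for $V$ Gorenstein projective, $P$ projective, $i>0$; and that finitely generated Gorenstein projectives are exactly the totally reflexive modules, which supplies a complete resolution by \emph{finitely generated} projectives). Both halves check out: with the paper's indexing convention $V=\mathrm{coker}\,f^0$, the kernel of the surjection $P^1\to V$ is $\mathrm{im}\,f^0\cong\mathrm{coker}\,f^{-1}$, so your shifted complete resolution does exhibit this syzygy as Gorenstein projective, and the induction on projective dimension plus the splitting of $0\to K\to P^1\to V\to 0$ via $\Ext^1_\A(V,K)=0$ gives (i); the identification $\Omega^i(\mathrm{coker}\,f^j)\cong\mathrm{coker}\,f^{j-i}$ gives (ii). The one point worth a sentence of care: if $\Omega$ is taken to mean the \emph{minimal} syzygy (kernel of a projective cover), then $\Omega V$ agrees with $\mathrm{coker}\,f^{-1}$ only up to projective direct summands, so either work throughout with the (possibly non-minimal) syzygies supplied by the complete resolution itself --- which suffices both for the splitting argument in (i) and for the dimension-shifting use of (ii) in Proposition \ref{propimp} --- or invoke the (easy, since total reflexivity is an additive condition) closure of $\A\textup{-GProj}$ under direct summands.
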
 

The following definition was introduced by X. W. Chen and L. G. Sun in \cite{chensun}, which was further studied by G. Zhou and A. 
Zimmermann in \cite{zhouzimm}, as a way of generalizing the concept of stable equivalence of Morita type introduced by M. Brou\'e in 
\cite{broue}.  
\begin{definition}\label{defi:3.2}
Let $\A$ and $\Gamma$ be finite-dimensional $\k$-algebras, and let $X$ be a $\Gamma$-$\A$-bimodule and $Y$ a $\A$-$\Gamma$-bimodule. We say that $X$ and $Y$ induce a {\it singular equivalence of Morita type} between $\A$ and $\Gamma$ (and that $\A$ and $
\Gamma$ are {\it singularly equivalent of Morita type}) if the following conditions are satisfied:
\begin{enumerate}
\item $X$ is finitely generated and projective as a left $\Gamma$-module and as a right $\A$-module.
\item $Y$ is finitely generated and projective as a left $\A$-module and as a right $\Gamma$-module. 
\item There is a finitely generated $\Gamma$-$\Gamma$-bimodule $Q$ with finite projective dimension such that $X\otimes_\A Y\cong 
\Gamma \oplus Q$ as $\Gamma$-$\Gamma$-bimodules.
\item There is a finitely generated $\A$-$\A$-bimodule $P$ with finite projective dimension such that $Y\hat{\otimes}_\Gamma X\cong \A\oplus P
$ as $\A$-$\A$-bimodules.
\end{enumerate}
\end{definition}
It follows from \cite[Prop. 2.3]{zhouzimm} that singular equivalences of Morita type induce equivalences of singularity categories.

\begin{remark}\label{rem:3.3}
The concept of singular equivalence of Morita type was further generalized by Z. Wang in \cite{wang}, where the concept of {\it singular 
equivalence of Morita type with level} is introduced. Moreover, in \cite[Prop. 2.6]{skart}, \O. Skarts{\ae}terhagen proved that if two algebras 
are singularly equivalent of Morita type, then they are also singularly equivalent of Morita type with level. Therefore, it follows from the steps 1-3 within the proof of 
\cite[Prop. 4.5]{wang} that if ${_\Gamma}X_\A$ and ${_\A}Y_\Gamma$ are bimodules which induce a singular equivalence of Morita 
type between two finite-dimensional $\k$-algebras $\A$ and $\Gamma$ as in Definition \ref{defi:3.2}, and such that $\Hom_\Gamma(X,\Gamma)$ and $\Hom_\A(Y, \A)$ are of finite projective dimension as a left $\A$-module and as a left $\Gamma$-module, respectively, then the functors 
\begin{align}\label{cmequiv0}
X\hat{\otimes}_\A-:\A\textup{-mod} \to \Gamma \textup{-mod}&& \text{ and } && Y\hat{\otimes}_\Gamma-:\Gamma\textup{-mod} \to \A\textup{-mod} 
\end{align}
send finitely generated Gorenstein projective modules to finitely generated Gorenstein projective modules.
\end{remark}

\begin{proposition}\label{propimp}
Let $\A$ and $\Gamma$ be finite dimensional $\k$-algebras, and assume that ${_\Gamma}X_\A$ and ${_\A} Y_\Gamma$ are bimodules that induce a singular 
equivalence of Morita type between $\A$ and $\Gamma$ as in Definition \ref{defi:3.2}, and such that $\Hom_\Gamma(X,\Gamma)$ and $\Hom_\A(Y, \A)$ are of finite projective dimension as a left $\A$-module and as a left $\Gamma$-module, respectively. If $V$ is a finitely generated Gorenstein projective left $\A$-module, then $X
\hat{\otimes}_\A V$ is a finitely generated Gorenstein projective left $\Gamma$-module. Moreover, if $P$ is a finitely generated $\A$-$\A$-bimodule with finite projective 
dimension satisfying Definition \ref{defi1} (iv), then $P\hat{\otimes}_\A V$ is a finitely generated projective left $\A$-module.    
\end{proposition}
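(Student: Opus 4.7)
The plan is to deduce the first assertion directly from Remark \ref{rem:3.3}, and to establish the second assertion by combining two facts about $P\hat{\otimes}_\A V$: that it is Gorenstein projective as a left $\A$-module and that it has finite projective dimension, so that Lemma \ref{lemma:3.1}(i) forces projectivity. The first assertion is immediate because our hypotheses match precisely those recalled in Remark \ref{rem:3.3}; that remark (ultimately citing \cite[Prop. 2.6]{skart} and \cite[Prop. 4.5]{wang}) yields that the functor $X\hat{\otimes}_\A-$ sends finitely generated Gorenstein projective left $\A$-modules to finitely generated Gorenstein projective left $\Gamma$-modules, and finite generation of $X\hat{\otimes}_\A V$ is automatic from the finite generation of $X$ as a right $\A$-module and of $V$ over $\A$.

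For the second assertion, my first step would be to exploit the isomorphism $Y\hat{\otimes}_\Gamma X \cong \A \oplus P$ of $\A$-$\A$-bimodules, which after tensoring on the right by $V$ yields
\[
Y\hat{\otimes}_\Gamma(X\hat{\otimes}_\A V) \cong V \oplus (P\hat{\otimes}_\A V)
\]
as left $\A$-modules. By the first assertion, $X\hat{\otimes}_\A V$ is Gorenstein projective over $\Gamma$; applying the symmetric version of the first assertion (interchanging the roles of $X$ and $Y$, and of $\A$ and $\Gamma$) then shows that $Y\hat{\otimes}_\Gamma(X\hat{\otimes}_\A V)$ is Gorenstein projective over $\A$. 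Since the class of finitely generated Gorenstein projective modules is closed under direct summands, we obtain that $P\hat{\otimes}_\A V$ is Gorenstein projective as a left $\A$-module.

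For finite projective dimension, I would fix a finite projective resolution of $P$ in the category of $\A$-$\A$-bimodules and apply $-\hat{\otimes}_\A V$ to it. The key observation is that every projective $\A$-$\A$-bimodule is a direct summand of a direct sum of copies of $\A\otimes_\k\A$, which is free as a right $\A$-module; hence each term of the resolution is flat as a right $\A$-module, so the tensored sequence remains exact, and each tensored term is a summand of $\A\otimes_\k V$, which is a finitely generated free left $\A$-module because $V$ has finite $\k$-dimension. Therefore $P\hat{\otimes}_\A V$ admits a finite projective resolution by finitely generated projective left $\A$-modules; combining this with the Gorenstein projectivity just established, Lemma \ref{lemma:3.1}(i) delivers the conclusion. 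The main subtlety I expect is keeping careful track of one-sided versus two-sided actions when passing through the various tensor products and cleanly verifying the right-flatness of projective bimodules; once that bookkeeping is in place, every remaining step is routine.
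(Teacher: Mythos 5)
Your treatment of the first assertion and of the Gorenstein projectivity of $P\hat{\otimes}_\A V$ matches the paper exactly: both use Remark \ref{rem:3.3} for $X\hat{\otimes}_\A-$ and $Y\hat{\otimes}_\Gamma-$, the decomposition $Y\hat{\otimes}_\Gamma(X\hat{\otimes}_\A V)\cong V\oplus(P\hat{\otimes}_\A V)$, and closure of finitely generated Gorenstein projectives under direct summands, before invoking Lemma \ref{lemma:3.1}(i). The gap is in your finite-projective-dimension step. You claim that because each term $P^j$ of a finite bimodule-projective resolution of $P$ is flat (indeed projective) as a right $\A$-module, the sequence stays exact after applying $-\hat{\otimes}_\A V$. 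That inference is false in general: since the $P^j$ are right-flat, the complex $P^\bullet$ is a flat resolution of $P$ as a right $\A$-module, so the homology of $P^\bullet\hat{\otimes}_\A V$ in positive degrees is precisely $\mathrm{Tor}_k^\A(P,V)$, and flatness of the terms says nothing about the vanishing of these groups (only the module being resolved, $P$, matters, and $P$ itself is merely of finite projective dimension as a right $\A$-module, not flat). So exactness of the tensored sequence is exactly the assertion $\mathrm{Tor}_k^\A(P,V)=0$ for $k\geq 1$, which you never establish and which cannot follow from your argument, since it nowhere uses that $V$ is Gorenstein projective.

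This vanishing is where the paper does real work: writing $s$ for the projective dimension of $P$ as a bimodule, the resolution shows $\Omega^s_\A P$ is projective as a right $\A$-module, and Lemma \ref{lemma:3.1}(ii) (the point of requiring $V$ Gorenstein projective) gives a finitely generated left $\A$-module $W_1$ with $V\cong\Omega^s_\A W_1$; dimension shifting in both variables then yields
\begin{equation*}
\mathrm{Tor}_k^\A(P,V)\cong\mathrm{Tor}_k^\A(P,\Omega^s_\A W_1)\cong\mathrm{Tor}_k^\A(\Omega^s_\A P,W_1)=0\quad\text{for all }k\geq 1.
\end{equation*}
With this inserted, the rest of your argument goes through: each $P^j\hat{\otimes}_\A V$ is a finitely generated projective left $\A$-module (your observation that it is a summand of a finite direct sum of copies of $\A\otimes_\k V$ is fine and is essentially the paper's), so $P\hat{\otimes}_\A V$ has finite projective dimension, and Lemma \ref{lemma:3.1}(i) finishes the proof. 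As written, however, the exactness claim is a genuine missing step rather than routine bookkeeping.
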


\begin{proof}
Assume that $V$ is a finitely generated Gorenstein projective left $\A$-module. It follows from Remark \ref{rem:3.3} that $X\hat{\otimes}_\A V$ is a finitely generated Gorenstein projective left $\Gamma$-module. Assume next that $P$ is a finitely generated $\A$-$\A$-bimodule with finite projective dimension and which satisfies Definition 
\ref{defi:3.2} (iv).  Then there exists a finite projective resolution of the $\A$-$\A$-bimodule $P$:
\begin{equation}\label{projectiveP}
0\to P^s\xrightarrow{\delta^s}P^{s-1}\to\cdots\to P^1\xrightarrow{\delta^1}P^0\to 0,
\end{equation}
where $s\geq 0$ is the projective dimension of $P$ as a $\A$-$\A$-bimodule.  Since for every $0\leq j\leq s$, $P^j$ is a 
projective $\A$-$\A$-bimodule, it follows that $P^j$ is also a left and a right projective $\A$-module. In particular, the 
complex (\ref{projectiveP}) is a projective resolution of $P$ as a right $\A$-module, which implies that $P$ has finite projective dimension 
as a right $\A$-module. In particular, the right $\A$-module $\Omega^s_\A P=P^s$ is a projective right $\A$-module. Since $V$ is a Gorenstein projective, it follows from Lemma \ref{lemma:3.1} (ii) that there exists a finitely generated left $\A$-module $W_1$ such that $V=\Omega^s_\A W_1$. Using dimension shifting together with the above observations, we get that for all $k\geq 1$,
\begin{align*}
\mathrm{Tor}_k^\A(P,V)\cong \mathrm{Tor}_k^\A(P,\Omega^s_\A W_1)\cong \mathrm{Tor}_k^\A(\Omega^s_\A P,W_1)=0.  
\end{align*}
Thus, we get a long exact sequence of left $\A$-modules
\begin{equation}\label{projectivePV}
0\to P^s\hat{\otimes}_\A V\xrightarrow{\delta^s\hat{\otimes} 1_V}P^{s-1}\hat{\otimes}_\A V\to\cdots\to P^1\hat{\otimes}_\A V\xrightarrow{\delta^1\hat{\otimes} 1_V}
P^0\hat{\otimes}_\A V\to 0.
\end{equation}
Since for all $0\leq j\leq s$, the $\A$-$\A$-bimodule $P^j$ in (\ref{projectiveP}) is finitely generated and projective, it follows that $P^j
\hat{\otimes}_\A V$ is a finitely generated projective left $\A$-module, which implies that (\ref{projectivePV}) is a projective resolution of $P
\hat{\otimes}_\A V$. Therefore $P\hat{\otimes}_\A V$ has finite projective dimension as a left $\A$-module. 

On the other hand, since $P$ satisfies Definition \ref{defi1} (iv), it follows that 
\begin{align*}
Y\hat{\otimes}_\Gamma(X\hat{\otimes}_\A V)\cong V\oplus (P\hat{\otimes}_\A V) \text{ as left $\A$-modules}.
\end{align*}
By hypothesis and by Remark \ref{rem:3.3}, we have that both $V$ and  $Y\hat{\otimes}_\Gamma(X\hat{\otimes}_\A V)$ are both finitely generated Gorenstein projective left $\A$-modules, and therefore so is $P\hat{\otimes}_\A V$. It follows from Lemma \ref{lemma:3.1}(i) that $P\hat{\otimes}_\A V$ is a finitely generated projective left $\A$-module. This 
finishes the proof of Proposition \ref{propimp}. 
\end{proof}

We obtain the following immediate consequence of Proposition \ref{propimp} involving complexes.

\begin{corollary}\label{cor:3.5}
Let $\A$ and $\Gamma$ be finite dimensional $\k$-algebras, and assume that ${_\Gamma}X_\A$, ${_\A} Y_\Gamma$ and $P$ are as in Proposition \ref{propimp}. If $V^\bullet$ is a {\bf bounded} complex in $\mathcal{C}^-(\A)$ whose terms are all finitely generated Gorenstein projective left $\A$-modules, then $P\hat{\otimes}_\A V^\bullet$ is an object in $\mathcal{K}^b(\A\textup{-proj})$.
\end{corollary}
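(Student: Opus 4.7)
The plan is to derive Corollary \ref{cor:3.5} as a straightforward termwise application of Proposition \ref{propimp}. Write $V^\bullet$ as
\begin{equation*}
V^\bullet: 0\to V^{-n}\xrightarrow{\delta_V^{-n}} V^{-n+1}\to\cdots\to V^{m-1}\xrightarrow{\delta_V^{m-1}} V^m\to 0,
\end{equation*}
where each $V^i$ is finitely generated Gorenstein projective over $\A$. Since $P$ is a finitely generated $\A$-$\A$-bimodule of finite projective dimension satisfying condition (iv) of Definition \ref{defi:3.2}, Proposition \ref{propimp} applies to each term $V^i$, yielding that $P\hat{\otimes}_\A V^i$ is a finitely generated projective left $\A$-module for every $n\leq i\leq m$. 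The functor $P\hat{\otimes}_\A-$ is additive and sends $\delta_V^i$ to $1_P\hat{\otimes}\delta_V^i$, so applying it termwise produces the complex
\begin{equation*}
P\hat{\otimes}_\A V^\bullet: 0\to P\hat{\otimes}_\A V^{-n}\xrightarrow{1_P\hat{\otimes}\delta_V^{-n}} \cdots\xrightarrow{1_P\hat{\otimes}\delta_V^{m-1}} P\hat{\otimes}_\A V^m\to 0,
\end{equation*}
which is manifestly bounded with the same amplitude as $V^\bullet$, and whose differentials square to zero since $\delta_V^{i+1}\circ\delta_V^i=0$.

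Combining these two observations, every term of $P\hat{\otimes}_\A V^\bullet$ is a finitely generated projective left $\A$-module, and only finitely many terms are nonzero, so by definition $P\hat{\otimes}_\A V^\bullet$ lies in $\mathcal{K}^b(\A\textup{-proj})$. No obstacle of substance is expected: the corollary is essentially bookkeeping on top of Proposition \ref{propimp}, the only subtle input being that the projectivity of $P\hat{\otimes}_\A V^i$ relies on the Gorenstein projectivity of $V^i$ together with the hypothesis that $\Hom_\Gamma(X,\Gamma)$ and $\Hom_\A(Y,\A)$ have finite projective dimension (so that Proposition \ref{propimp} is applicable). Since those hypotheses are built into the statement by appeal to ``$P$ as in Proposition \ref{propimp}'', the argument is complete.
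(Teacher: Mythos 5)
Your argument is correct and is exactly the paper's intended one: the paper states Corollary \ref{cor:3.5} as an immediate consequence of Proposition \ref{propimp}, obtained by applying $P\hat{\otimes}_\A-$ termwise so that each term becomes finitely generated projective and the complex stays bounded. Nothing further is needed.
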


Before we state the second important result of this article, we should note that by \cite[Thm. 3.1.4]{blehervelez2}, versal deformation rings of bounded complexes of finitely generated modules over finite-dimensional algebras are preserved by derived equivalences induced by {\it nice two-sided tilting complexes} (as introduced in \cite[Def. 3.1.1]{blehervelez2}). However, by e.g. \cite{dugas} and its references, not every stable equivalence of Morita type between self-injective algebras is induced by a derived equivalence. Moreover, it is shown in \cite[\S 4]{bekkert-giraldo-velez} that equivalences of singularity categories not necessarily induce singular equivalences of Morita type.     

\begin{theorem}\label{thm:3.6}
Let $\A$ and $\Gamma$ be finite dimensional $\k$-algebras, and assume that ${_\Gamma}X_\A$, ${_\A} Y_\Gamma$ and $P$ are as in Proposition \ref{propimp}. Let $V^\bullet$ be a {\bf bounded} complex in $\mathcal{C}^-(\A)$ whose terms are all finitely generated Gorenstein projective left $\A$-modules. Then the terms of $X\hat{\otimes}_\A V^\bullet$ are all finitely generated Gorenstein projective left $\Gamma$-modules, and the versal deformation rings $R(\A, V^\bullet)$ and $R(\Gamma, X\hat{\otimes}_\A V^\bullet)$ are isomorphic in $\hat{\Ca}$. Moreover, $R(\A, V^\bullet)$ is universal if and only if $R(\Gamma, X\hat{\otimes}_\A V^\bullet)$ is universal.
\end{theorem}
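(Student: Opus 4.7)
The first assertion, that the terms of $X\hat{\otimes}_\A V^\bullet$ are finitely generated Gorenstein projective left $\Gamma$-modules, is an immediate termwise application of Remark \ref{rem:3.3}. For the isomorphism of versal deformation rings and the transfer of universality, my plan is to adapt the strategy of \cite[Prop. 3.2.6]{blehervelez2} and \cite[Thm. 3.4]{bekkert-giraldo-velez} to the derived setting by producing mutually inverse natural transformations between the deformation functors $\hat{\Fun}_{V^\bullet}$ and $\hat{\Fun}_{X\hat{\otimes}_\A V^\bullet}$. Once these are in hand, continuity of both functors via \cite[Prop. 2.4.4]{blehervelez2} will reduce matters to the restricted functors on $\Ca$, and the universal property of the versal hull will supply the desired isomorphism in $\hat{\Ca}$ and preserve representability.

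For each Noetherian $R\in \hat{\Ca}$, set $RX=R\hat{\otimes}_\k X$, viewed as an $R\Gamma$-$R\A$-bimodule, and similarly $RY$, $RP$, $RQ$. Since $X$ is finitely generated projective as a right $\A$-module, $RX$ is finitely generated projective as a right $R\A$-module, so $RX\hat{\otimes}_{R\A}-$ is exact and preserves finite pseudocompact $R$-tor dimension. I will define
\begin{equation*}
T_R:\Def_\A(V^\bullet,R)\to \Def_\Gamma(X\hat{\otimes}_\A V^\bullet,R),\qquad [M^\bullet,\phi]\mapsto [RX\hat{\otimes}_{R\A}M^\bullet,\tilde{\phi}],
\end{equation*}
where $\tilde{\phi}$ arises from the canonical base-change isomorphism $\k\hat{\otimes}_R^\mathbf{L}(RX\hat{\otimes}_{R\A}M^\bullet)\cong X\hat{\otimes}_\A(\k\hat{\otimes}_R^\mathbf{L}M^\bullet)\cong X\hat{\otimes}_\A V^\bullet$, and an analogous $S_R$ given by $RY\hat{\otimes}_{R\Gamma}-$. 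Well-definedness on isomorphism classes and naturality in $R$ follow the template of the module-theoretic proofs just cited, once one checks the tor-dimension and termwise projectivity conditions using Remark \ref{rem:1.3}.

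The crucial computation is that of $S_R\circ T_R$. Using the bimodule isomorphism $Y\hat{\otimes}_\Gamma X\cong \A\oplus P$ together with projectivity of $RX$ (resp. $RY$) as a left (resp. right) $R\Gamma$-module, we obtain natural isomorphisms
\begin{equation*}
RY\hat{\otimes}_{R\Gamma}RX\hat{\otimes}_{R\A}M^\bullet\;\cong\;(R\A\oplus RP)\hat{\otimes}_{R\A}M^\bullet\;\cong\;M^\bullet\oplus(RP\hat{\otimes}_{R\A}M^\bullet).
\end{equation*}
By Corollary \ref{cor:3.5}, the reduction $P\hat{\otimes}_\A V^\bullet$ is a perfect complex over $\A$, and a lifting argument combining Lemma \ref{lemma:1.1}(i) (to lift a finite projective resolution of $P\hat{\otimes}_\A V^\bullet$ to one over $R\A$) with Lemma \ref{lemma:1.1}(ii) (to promote the comparison morphism to a quasi-isomorphism over $R\A$) will identify $RP\hat{\otimes}_{R\A}M^\bullet$ with a perfect complex over $R\A$; the finite projective dimension of $RP$ as a right $R\A$-module, inherited from that of $P$ over $\A$, guarantees boundedness of the derived tensor product. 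This exhibits $S_R(T_R[M^\bullet,\phi])$ as $[M^\bullet\oplus P_R^\bullet,\phi\oplus\pi_{R,P\hat{\otimes}_\A V^\bullet}]$ for a suitable perfect lift $P_R^\bullet$, which by the explicit bijection constructed in the proof of Lemma \ref{lemma:2.10} coincides with $[M^\bullet,\phi]$. The symmetric computation of $T_R\circ S_R$ uses $X\hat{\otimes}_\A Y\cong \Gamma\oplus Q$ together with the analogue of Corollary \ref{cor:3.5} over $\Gamma$.

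With the natural isomorphism $\hat{\Fun}_{V^\bullet}\cong\hat{\Fun}_{X\hat{\otimes}_\A V^\bullet}$ established, the isomorphism $R(\A,V^\bullet)\cong R(\Gamma,X\hat{\otimes}_\A V^\bullet)$ in $\hat{\Ca}$ follows from the universal property of versal hulls, and the final clause on universality is automatic since representability of a functor is preserved under natural isomorphism. The main obstacle I anticipate is the promotion of the perfectness of $P\hat{\otimes}_\A V^\bullet$ over $\A$ to perfectness of $RP\hat{\otimes}_{R\A}M^\bullet$ over $R\A$ for general Artinian $R$: this requires a careful inductive argument along small extensions, handling the comparison between the abstractly constructed perfect lift and the derived tensor product $RP\hat{\otimes}_{R\A}M^\bullet$ by repeated application of Nakayama's Lemma in the form supplied by Lemma \ref{lemma:1.0} and Lemma \ref{lemma:1.1}(ii).
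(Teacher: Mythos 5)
Your overall strategy is the paper's: define $\Xi_R\colon\Def_\A(V^\bullet,R)\to\Def_\Gamma(X\hat{\otimes}_\A V^\bullet,R)$ by $[M^\bullet,\phi]\mapsto[X_R\hat{\otimes}_{R\A}M^\bullet,X_R\hat{\otimes}_{R\A}\phi]$ with $X_R=R\hat{\otimes}_\k X$, use $Y_R\hat{\otimes}_{R\Gamma}X_R\hat{\otimes}_{R\A}M^\bullet\cong M^\bullet\oplus(P_R\hat{\otimes}_{R\A}M^\bullet)$ and the symmetric decomposition with $Q$, reduce bijectivity to the bijection of Lemma \ref{lemma:2.10}, and conclude by naturality and continuity. (One small caveat: $Y_R\hat{\otimes}_{R\Gamma}-$ is not literally an inverse natural transformation, since it lands in deformations of $V^\bullet\oplus(P\hat{\otimes}_\A V^\bullet)$ rather than of $V^\bullet$; as in the paper one gets injectivity of $\Xi_R$ from the composite being the Lemma \ref{lemma:2.10} bijection, and surjectivity from a further application of $X_R\hat{\otimes}_{R\A}-$ and the $\Gamma$-side analogue, which is what your computation of $T_R\circ S_R$ amounts to.)

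The step you flag as the main obstacle is, however, where your sketch would not go through as written, and it is also unnecessary. You propose to prove that $P_R\hat{\otimes}_{R\A}M^\bullet$ is perfect over $R\A$ by lifting a projective resolution of $P\hat{\otimes}_\A V^\bullet$ via Lemma \ref{lemma:1.1}(i) and comparing via Lemma \ref{lemma:1.1}(ii), with an induction along small extensions. First, Lemma \ref{lemma:1.1}(ii) requires both complexes to have abstractly free finitely generated $R\A$-module terms, whereas the terms of $P_R\hat{\otimes}_{R\A}M^\bullet$ are finite direct sums of copies of $P_R$, which need not be projective (let alone free) over $R\A$; one would have to retreat to Lemma \ref{lemma:1.0} applied to a mapping cone, and one would still need an $\Ext$-vanishing argument (in the spirit of Claim \ref{claim1} or Lemma \ref{lemma:1.7}) just to construct the comparison morphism lifting the identification over $\k$. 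Second, perfectness alone is not what the identification with the map of Lemma \ref{lemma:2.10} requires: that map uses the \emph{trivial} lift with its canonical marking, so you need $(P_R\hat{\otimes}_{R\A}M^\bullet,P_R\hat{\otimes}_{R\A}\phi)$ to be isomorphic \emph{as a quasi-lift} of $P\hat{\otimes}_\A V^\bullet$ to $(R\hat{\otimes}_\k(P\hat{\otimes}_\A V^\bullet),\pi_{R,P\hat{\otimes}_\A V^\bullet})$ of Remark \ref{rem:2.7}. Both points are settled at once by a tool you already invoke elsewhere: since $P\hat{\otimes}_\A V^\bullet$ is perfect (Corollary \ref{cor:3.5}), its deformation ring is $\k$ and universal by the first part of Lemma \ref{lemma:2.10} (via $\Ext^1_{\mathcal{D}^-(\A)}(P^\bullet,P^\bullet)=0$, Corollary \ref{cor:1.6} and \cite[Thm. 2.1.12]{blehervelez2}), so every quasi-lift of it over an Artinian $R$ --- in particular $(P_R\hat{\otimes}_{R\A}M^\bullet,P_R\hat{\otimes}_{R\A}\phi)$ --- is isomorphic to the trivial one, which is perfect. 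This is exactly how the paper argues; with that substitution for your lifting argument, the rest of your proposal matches the paper's proof.
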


Although the proof of Theorem \ref{thm:3.6} is easily obtained by adjusting the arguments in the proof of \cite[Prop. 3.2.6]{blehervelez2} to our context, we decided to include it for the convenience of the reader.

\begin{proof}[Proof of Theorem \ref{thm:3.6}]
Let $R$ be an Artinian object in $\Ca$. 
%By Remark \ref{rem:1.3}, we can assume as before that $V^\bullet$ is a bounded above complex of abstractly free finitely generated $\A$-modules.
Note that $X_R=R\hat{\otimes}_\k X$ (resp. $Y_R=R\hat{\otimes}_\k Y$) is projective as left $R\Gamma$-module (resp. $R\A$-module) and as right $R\A$-module (resp. $R\Gamma$-module). Since $X_R\hat{\otimes}_{R\A}Y_R\cong R\hat{\otimes}_\k(X\hat{\otimes}_\A Y)$, following Definition \ref{defi:3.2} we obtain
\begin{align*}
Y_R\hat{\otimes}_{R\Gamma} X_R&\cong R\A\oplus P_R &&\text{ as $R\A$-$R\A$-bimodules, and }\\
X_R\hat{\otimes}_{R\A} Y_R&\cong R\Gamma\oplus Q_R &&\text{ as $R\Gamma$-$R\Gamma$-bimodules,}
\end{align*}
where $P_R=R\hat{\otimes}_\k P$ (resp. $Q_R=R\hat{\otimes}_k Q$) is a $R\A$-$R\A$-bimodule (resp. $R\Gamma$-$R\Gamma$-bimodule) of finite projective dimension. By Corollary \ref{cor:3.5} we have that $P\hat{\otimes}_\A V^\bullet$ is an object in $\mathcal{K}^b(\A\textup{-proj})$, and thus by  Lemma \ref{lemma:2.10}, we have that $P\hat{\otimes}_\A V^\bullet$ has a universal deformation ring $R(\A, P\hat{\otimes}_\A V^\bullet)$ isomorphic to $\k$. In particular, every quasi-lift of $P\hat{\otimes}_\A V^\bullet$ over $R$ is isomorphic to the trivial quasi-lift $(R\hat{\otimes}_\k(P\hat{\otimes}_\A V^\bullet), \pi_{R, P\hat{\otimes}_\A V^\bullet})$ of $P\hat{\otimes}_\A V^\bullet$ over $R$, where $\pi_{R, P\hat{\otimes}_\A V^\bullet}: \k\hat{\otimes}_R(R\hat{\otimes}_\k(P\hat{\otimes}_\A V^\bullet))\to P\hat{\otimes}_\A V^\bullet$ is the natural isomorphism in $\mathcal{K}^-(\A)$. 
Let now $(M^\bullet, \phi)$ be a quasi-lift of $V^\bullet$ over $R$. Since $R$ is Artinian, by Remark \ref{rem:1.3} we can assume as before that $V^\bullet$ (resp. $M^\bullet$) is a bounded above complex of abstractly free finitely generated $\A$-modules (resp. $R\A$-modules), and that $\phi$ is given by a quasi-isomorphism in $\mathcal{C}^-(\A)$ that is surjective on terms. Let $M'^\bullet=X_R\hat{\otimes}_\A M^\bullet$. Note that all the terms of $M'^\bullet$ are finitely generated left $R\Gamma$-modules. Since $X_R$ is a finitely generated projective right $R\A$-module and since $M^\bullet$ has terms that are all free over $R\A$, it follows that all the terms of $M'^\bullet$ are abstractly free finitely generated $R\Gamma$-modules. We next observe that we can view $\A$-mod as the full subcategory of $R\A$-mod that consists in all those objects in $R\A$-mod on which the action of the maximal ideal $\m_R$ of $R$ is trivial. Moreover, on $\A$-mod the functor $X_R\hat{\otimes}_\A-$ coincides with the functor $X\hat{\otimes}_\A-$. Define $\phi'=X_R\hat{\otimes}_{R\A}\phi$. Since $\phi$ is assumed to be a quasi-isomorphism in $\mathcal{C}^-(\A)$ and $X_R$ is projective as a right $R\A$-module, it follows that $\phi'$ is also a quasi-isomorphism in $\mathcal{C}^-(\Gamma)$. Moreover,
\begin{equation}\label{eqn:3.4}
M'^\bullet\hat{\otimes}_R\k=(X_R\hat{\otimes}_{R\A}M^\bullet)\hat{\otimes}_R\k=X_R\hat{\otimes}_{R\A}(M^\bullet\hat{\otimes}_R\k)\xrightarrow{\phi'}X_R\hat{\otimes}_{R\A}V^\bullet=V'^\bullet,
\end{equation}
which means $(M'^\bullet, \phi')$ is a quasi-lift of $V'^\bullet$ over $R$. We therefore obtain for all Artinian objects $R$ in $\Ca$, a well-defined map
\begin{align*}
\Xi_R:\Def_\A(V^\bullet, R)&\to\Def_\A(V'^\bullet, R)\\
[M^\bullet, \phi]&\mapsto [M',\phi]=[X_R\hat{\otimes}_{R\A}M^\bullet, X_R\hat{\otimes}_{R\A}\phi]. 
\end{align*}
We need to show that $\Xi_R$ is bijective. Arguing as in (\ref{eqn:3.4}), we see that $(Y_R\hat{\otimes}_{R\Gamma}M'^\bullet,Y_R\hat{\otimes}_{R\Gamma}\phi')$
is a quasi-lift of $Y\hat{\otimes}_\Gamma V'^\bullet\cong V^\bullet \oplus (P\hat{\otimes}_\Lambda V^\bullet)$ over $R$. Moreover, 
\begin{eqnarray}
\label{eq:olala}
(Y_R\hat{\otimes}_{R\Gamma}M'^\bullet,Y_R\hat{\otimes}_{R\Gamma}\phi')
&\cong&((R\Lambda\oplus P_R)\hat{\otimes}_{R\Lambda}M^\bullet, (R\Lambda\oplus P_R)\hat{\otimes}_{R\Lambda}\phi) \\
&\cong&(M^\bullet\oplus (P_R\hat{\otimes}_{R\Lambda}M^\bullet), \phi\oplus(P_R\hat{\otimes}_{R\Lambda}\phi)). \nonumber
\end{eqnarray}
Since $(P_R\hat{\otimes}_{R\Lambda}M^\bullet,P_R\hat{\otimes}_{R\Lambda}\phi)$ is
a quasi-lift of $P\hat{\otimes}_\Lambda V^\bullet$ over $R$, which is a perfect complex over $\A$,
it follows from Lemma \ref{lemma:2.10} that $\Xi_R$ is injective.

Now let $(L^\bullet,\psi)$ be a quasi-lift of $V'=X\hat{\otimes}_{\Lambda}V^\bullet$ over $R$. As before, since $R$ is Artinian, we can assume that $L^\bullet$ is a bounded above complex of abstractly free finitely generated $R\Gamma$-modules, and that $\psi$ is given by a quasi-isomorphism in $\mathcal{C}^-(\Gamma)$ that is surjective on terms. Then 
$(L'^\bullet,\psi')= (Y_R\hat{\otimes}_{R\Gamma}L^\bullet,Y_R\hat{\otimes}_{R\Gamma}\psi)$ is a quasi-lift of 
$V''^\bullet=Y\hat{\otimes}_{\Gamma}V'^\bullet\cong V^\bullet \oplus (P\hat{\otimes}_\Lambda V^\bullet)$ over $R$. By Lemma \ref{lemma:2.10}, there exists a quasi-lift 
$(M^\bullet,\phi)$ of $V^\bullet$ over $R$ such that $(L'^\bullet,\psi')$ is isomorphic to the quasi-lift
$\left(M^\bullet\oplus (R\hat{\otimes}_k (P\hat{\otimes}_\Lambda V^\bullet)),\phi\oplus\pi_{R,P\hat{\otimes}_\Lambda V^\bullet}\right)$ of $P\hat{\otimes}_RV^\bullet$ over $R$. In particular, we can assume that the terms of $M^\bullet$ are abstractly free finitely generated $R\A$-modules, and $\phi$ is given by a quasi-isomorphism in $\mathcal{C}^-(\A)$ that is surjective on terms.
Arguing similarly as in (\ref{eq:olala}), we then have that $(L'^\bullet,\psi')$ is isomorphic to $(M''^\bullet,\phi'')=
(Y_R\hat{\otimes}_{R\Gamma}M'^\bullet,Y_R\hat{\otimes}_{R\Gamma}\phi')$ where $(M'^\bullet,\phi')=(X_R \hat{\otimes}_{R\Lambda}M^\bullet,X_R\hat{\otimes}_{R\Lambda}\phi)$. Therefore, $(X_R\hat{\otimes}_{R\Lambda}L'^\bullet ,X_R\hat{\otimes}_{R\Lambda}\psi')\cong (X_R\hat{\otimes}_{R\Lambda}M''^\bullet,X_R\hat{\otimes}_{R\Lambda}\phi'')$.
Arguing again similarly as in (\ref{eq:olala}) and using Lemma \ref{lemma:2.10}, we have 
\begin{eqnarray*}
(X_R\hat{\otimes}_{R\Lambda}L'^\bullet ,X_R\hat{\otimes}_{R\Lambda}\psi') 
&\cong&\left(L^\bullet\oplus (R\hat{\otimes}_k (Q\hat{\otimes}_\Gamma V'^\bullet)),\psi\oplus \pi_{R,Q\hat{\otimes}_\Gamma V'^\bullet}\right),
\mbox{ and }\\
(X_R\hat{\otimes}_{R\Lambda}M''^\bullet,X_R\hat{\otimes}_{R\Lambda}\phi'')
&\cong&\left(M'^\bullet \oplus (R\hat{\otimes}_k (Q\hat{\otimes}_\Gamma V'^\bullet)),\phi'\oplus \pi_{R,Q\hat{\otimes}_\Gamma V'^\bullet}\right).
\end{eqnarray*}
Thus by Lemma \ref{lemma:2.10}, it follows that $(L^\bullet,\psi)\cong (M'^\bullet,\phi')$, i.e. $\Xi_R$ is surjective.

To show that the maps $\Xi_R$ are natural with respect to morphisms $\alpha:R\to R'$ in 
$\mathcal{C}$, consider $(M^\bullet,\phi)$ and $(M'^\bullet,\phi')$ as above.
Since $X_R$ is a projective right $R\Lambda$-module and the terms of 
$M^\bullet$ can be assumed to be abstractly free finitely generated $R\A$-modules, there exists a natural isomorphism in $\mathcal{K}^-(R\A)$:
\begin{equation*}
f\;:\quad R' \hat{\otimes}_{R,\alpha}M'^\bullet= 
R'\hat{\otimes}_{R,\alpha}(X_R\hat{\otimes}_{R\Lambda}M^\bullet)
\to X_{R'}\hat{\otimes}_{R'\Lambda}(R'\hat{\otimes}_{R,\alpha} M^\bullet).
\end{equation*}
It is straightforward to see that $f$ provides an isomorphism between the quasi-lifts $(R'\hat{\otimes}_{R,\alpha}M'^\bullet,(\phi')_\alpha)$ and 
$(X_{R'} \hat{\otimes}_{R'\Lambda} (R'\hat{\otimes}_{R,\alpha}M^\bullet),X_{R'}\hat{\otimes}_{R'\Lambda}(\phi_\alpha))$
of $V'^\bullet$ over $R'$, where $\phi_\alpha$ (resp. $(\phi')_\alpha$) is the composition of quasi-isomorphisms $R'\hat{\otimes}_{R,\alpha}((X_R\hat{\otimes}_{R\Lambda}M^\bullet)\hat{\otimes}_R\k)\cong M'^\bullet\hat{\otimes}_R\k\xrightarrow{\phi} V^\bullet$ (resp. $X_{R'}\hat{\otimes}_{R'\Lambda}(R'\hat{\otimes}_{R,\alpha} M^\bullet)\hat{\otimes}_{R'}\k\cong (X_{R'}\hat{\otimes}_{R'\Lambda}M^\bullet)\hat{\otimes}_{R}\k\xrightarrow{\phi'} V^\bullet$).    
Since the deformation functors $\hat{\Fun}_{V^\bullet}$ and $\hat{\Fun}_{V'^\bullet}$ are continuous, this implies that they are naturally isomorphic. Hence the versal deformation rings $R(\Lambda,V^\bullet)$ and $R(\Gamma,V'^\bullet)$ are isomorphic in $\hat{\mathcal{C}}$.
\end{proof}

Since singular equivalences of Morita type induce equivalences of singularity categories as noted before, we obtain the following consequence of Theorem \ref{thm:3.6}.

\begin{corollary}
Let $\A$ and $\Gamma$ be finite dimensional $\k$-algebras, and assume that ${_\Gamma}X_\A$, ${_\A} Y_\Gamma$ and $P$ are as in Proposition \ref{propimp}. Let $V^\bullet$ be a {\bf bounded} complex in $\mathcal{C}^-(\A)$ whose terms are all finitely generated Gorenstein projective $\A$-modules, and such that $\Hom_{\mathcal{D}_\textup{sg}(\A\textup{-mod})}(V^\bullet, V^\bullet)=\k$. Then the terms of $X\hat{\otimes}_\A V^\bullet$ are all finitely generated Gorenstein projective $\Gamma$-modules, and 
\begin{equation*}
\Hom_{\mathcal{D}_\textup{sg}(\Gamma\textup{-mod})}(X\hat{\otimes}_\A V^\bullet, X\hat{\otimes}_\A V^\bullet)=\k. 
\end{equation*}
Moreover, the universal deformation rings $R(\A, V^\bullet)$ and $R(\Gamma, X\hat{\otimes}_\A V^\bullet)$ are isomorphic in $\hat{\Ca}$. 
\end{corollary}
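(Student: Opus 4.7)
The plan is to put together three pieces that are already essentially in hand: Remark \ref{rem:3.3} and Proposition \ref{propimp} (to transport Gorenstein projectivity across the bimodule), the fact that a singular equivalence of Morita type induces a triangle equivalence of singularity categories (to transport the condition on singularity endomorphisms), and then Theorem \ref{thm01} together with Theorem \ref{thm:3.6} (to upgrade the isomorphism of versal deformation rings to a canonical isomorphism of universal ones). So the structure of the proof has essentially three stages.

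First I would observe that the termwise claim is immediate: by Remark \ref{rem:3.3}, the functor $X\hat{\otimes}_\A-:\A\textup{-mod}\to\Gamma\textup{-mod}$ preserves finitely generated Gorenstein projective modules, so applying it degreewise to $V^\bullet$ produces a bounded complex $X\hat{\otimes}_\A V^\bullet$ in $\mathcal{C}^-(\Gamma)$ whose terms all lie in $\Gamma\textup{-GProj}$.

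Next, by \cite[Prop. 2.3]{zhouzimm}, the bimodules ${_\Gamma}X_\A$ and ${_\A}Y_\Gamma$ induce mutually inverse triangle equivalences between $\mathcal{D}_\textup{sg}(\A\textup{-mod})$ and $\mathcal{D}_\textup{sg}(\Gamma\textup{-mod})$, implemented by (suitable derived versions of) $X\hat{\otimes}_\A-$ and $Y\hat{\otimes}_\Gamma-$; since $X$ and $Y$ are projective on both sides, these derived functors coincide with the usual termwise tensor products on bounded complexes of Gorenstein projectives. Consequently
\begin{equation*}
\Hom_{\mathcal{D}_\textup{sg}(\Gamma\textup{-mod})}(X\hat{\otimes}_\A V^\bullet,X\hat{\otimes}_\A V^\bullet)\cong \Hom_{\mathcal{D}_\textup{sg}(\A\textup{-mod})}(V^\bullet,V^\bullet)=\k,
\end{equation*}
which establishes the endomorphism statement.

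Finally, since $\mathcal{D}_\textup{sg}(\A\textup{-mod})$ is a thick subcategory of $\mathcal{D}^-_\mathfrak{perf}(\A)$ (as recorded in \S\ref{sec2}), the hypothesis $\Hom_{\mathcal{D}_\textup{sg}(\A\textup{-mod})}(V^\bullet,V^\bullet)=\k$ is equivalent to $\Hom_{\mathcal{D}^-_\mathfrak{perf}(\A)}(V^\bullet,V^\bullet)=\k$, and the same holds over $\Gamma$. Hence Theorem \ref{thm01} (proved via Proposition \ref{prop2}) applies to both $V^\bullet$ and $X\hat{\otimes}_\A V^\bullet$, showing that the versal deformation rings $R(\A,V^\bullet)$ and $R(\Gamma,X\hat{\otimes}_\A V^\bullet)$ are in fact universal. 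Theorem \ref{thm:3.6} then gives an isomorphism $R(\A,V^\bullet)\cong R(\Gamma,X\hat{\otimes}_\A V^\bullet)$ in $\hat{\Ca}$; since both rings are universal, this isomorphism is canonical. The only subtle point, and essentially the only thing that is not bookkeeping, is making sure that the derived functor used in \cite[Prop. 2.3]{zhouzimm} agrees with the plain termwise functor $X\hat{\otimes}_\A-$ on complexes with Gorenstein projective terms; this is where one uses that $X$ is right $\A$-projective together with Lemma \ref{lemma:1.3} to rule out hidden $\mathrm{Tor}$ contributions.
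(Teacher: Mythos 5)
Your proposal is correct and follows essentially the same route the paper takes: termwise preservation of Gorenstein projectivity via Remark \ref{rem:3.3}/Proposition \ref{propimp}, transport of the condition $\Hom_{\mathcal{D}_\textup{sg}}(V^\bullet,V^\bullet)=\k$ through the singularity-category equivalence of \cite[Prop. 2.3]{zhouzimm} (using that $\mathcal{D}_\textup{sg}$ sits as a thick, hence full, subcategory of $\mathcal{D}^-_{\mathfrak{perf}}$ so that Theorem \ref{thm01}/Proposition \ref{prop2} applies on both sides), and then Theorem \ref{thm:3.6} for the isomorphism of the (now universal) deformation rings. The only slight overstatement is your final remark: since $X$ is projective, hence flat, as a right $\A$-module, $X\hat{\otimes}_\A-$ is already exact and agrees with its derived functor on any complex, so no appeal to Lemma \ref{lemma:1.3} or to Gorenstein projectivity of the terms is needed to rule out Tor contributions.
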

\appendix
\section{A Remark Concerning Universal Deformation Rings of Finitely Generated Gorenstein Projective $\A$-modules over Finite Dimensional Algebras}

As before, we assume that $\k$ is a field of arbitrary characteristic and that $\A$ is an arbitrary but fixed finite dimensional $\k$-algebra.

By using the same arguments as those in the proofs of \cite[Thm. 1.2]{velez2} and \cite[Thm. 3.4]{bekkert-giraldo-velez} together with Lemma \ref{lemma:3.1}, Proposition \ref{propimp}, the observations in the paragraph after the proof of \cite[Lemma 2.1]{chenxw3}, and \cite[Lemma 2.2]{chenxw3}, we can drop the conditions of $\A$ being a Gorenstein algebra and $V$ being a (maximal) Cohen-Macaulay left $\A$-module to obtain the proof of the following version of \cite[Thm. 1.2]{velez2} and \cite[Thm. 3.4]{bekkert-giraldo-velez}.

\begin{theorem}\label{thm:4.5}
Let $V$ be a finitely generated Gorenstein projective left $\A$-module.
\begin{enumerate}
\item If the stable endomorphism ring of $V$ is isomorphic to $\k$, then the versal deformation ring $R(\A,V)$ is universal. Moreover, $\Omega V$ also has stable endomorphism ring isomorphic to $\k$ and the universal deformation rings $R(\A, V)$ and $R(\A,\Omega V)$ are isomorphic.
\item Assume that $\Gamma$ is another finite dimensional $\k$-algebra such that there exist bimodules ${_\Gamma}X_\A$ and ${_\A} Y_\Gamma$ inducing a singular 
equivalence of Morita type between $\A$ and $\Gamma$ as in Definition \ref{defi:3.2} and which satisfy the hypothesis of Proposition \ref{propimp}. Then $X\hat{\otimes}_\A V$ is a finitely generated Gorenstein projective left $\Gamma$-module, and the versal deformation rings $R(\A, V)$ and $R(\Gamma, X\hat{\otimes}_\A V)$ are isomorphic. In particular, $R(\A,V)$ is universal if and only if $R(\Gamma, X\hat{\otimes}_\A V)$ is universal.
\end{enumerate} 
\end{theorem}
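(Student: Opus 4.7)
The plan is to reduce Theorem \ref{thm:4.5} to the results already established in this paper by viewing the finitely generated Gorenstein projective module $V$ as the stalk complex $V^\bullet$ concentrated in degree zero. The terms of such a stalk complex trivially lie in $\A\textup{-GProj}$, so Proposition \ref{prop2}, Lemma \ref{lemma:2.10}, and Theorem \ref{thm:3.6} all apply. By \cite[Prop. 2.5.1]{blehervelez2}, the versal deformation ring of this stalk complex coincides with $R(\A,V)$, and an analogous remark applies to $\Omega V$ and to $X\hat{\otimes}_\A V$.

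For part (i), the main bridge I would invoke is the identification
\begin{equation*}
\Hom_{\mathcal{D}_\textup{sg}(\A\textup{-mod})}(V,V)\cong \SEnd_\A(V),
\end{equation*}
which holds for every finitely generated Gorenstein projective module $V$ over an arbitrary finite-dimensional $\k$-algebra. This is precisely the content of the observations following the proof of \cite[Lemma 2.1]{chenxw3} together with \cite[Lemma 2.2]{chenxw3}. The hypothesis on the stable endomorphism ring then translates to $\Hom_{\mathcal{D}_\textup{sg}(\A\textup{-mod})}(V^\bullet,V^\bullet)=\k$, and Proposition \ref{prop2} yields universality of $R(\A,V^\bullet)\cong R(\A,V)$. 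For the syzygy claim, I would first observe that $\Omega V$ is itself finitely generated Gorenstein projective (immediate from the defining complete resolution, or from Lemma \ref{lemma:3.1}). Choosing a projective cover $P\twoheadrightarrow V$ produces an exact triangle $\Omega V\to P\to V\to T\Omega V$ in $\mathcal{D}^b(\A\textup{-mod})$, and since $P$ becomes zero in $\mathcal{D}_\textup{sg}(\A\textup{-mod})$, this triangle gives $V\cong T\Omega V$ there; applying the above identification twice then yields $\SEnd_\A(\Omega V)=\k$. The isomorphism $R(\A,V)\cong R(\A,\Omega V)$ is then obtained by adapting the bijection-of-deformations argument in \cite[Thm. 1.2]{velez2}: lifts of $V$ can be patched with (trivial) lifts of the projective cover $P$ to produce lifts of $\Omega V$ over the same ring, and projectivity of $P$ makes this patching compatible with change of base.

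For part (ii), Proposition \ref{propimp} directly guarantees that $X\hat{\otimes}_\A V$ is a finitely generated Gorenstein projective left $\Gamma$-module. Applying Theorem \ref{thm:3.6} to $V^\bullet$ produces an isomorphism of versal deformation rings in $\hat{\Ca}$; since $X$ is projective as a right $\A$-module, the complex $X\hat{\otimes}_\A V^\bullet$ is again a stalk complex (of $X\hat{\otimes}_\A V$ in degree zero), so by \cite[Prop. 2.5.1]{blehervelez2} we recover $R(\A,V)\cong R(\Gamma,X\hat{\otimes}_\A V)$. The universality equivalence is automatic, since an isomorphism in $\hat{\Ca}$ takes a pro-representing hull to a pro-representing hull, and such a hull represents the functor if and only if its counterpart does.

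The hardest step—and the one that distinguishes this statement from its predecessors \cite[Thm. 1.2]{velez2} and \cite[Thm. 3.4]{bekkert-giraldo-velez}—is verifying that the singular Hom and the stable Hom agree without assuming $\A$ to be Gorenstein or $V$ to be Cohen-Macaulay. Once this identification is in hand via \cite{chenxw3}, all the auxiliary facts used in the earlier proofs (closure of Gorenstein projectives under syzygies, behavior under projective summands, interaction with bimodules of finite projective dimension) are available from Lemma \ref{lemma:3.1} and Proposition \ref{propimp}, and the main arguments transplant with only notational adjustments.
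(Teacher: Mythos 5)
Your proposal is correct in substance, but it takes a genuinely different route from the paper's for most of the statement. The paper's appendix argument never returns to the complex-level machinery: it simply reruns the module-level proofs of \cite[Thm.\ 1.2]{velez2} and \cite[Thm.\ 3.4]{bekkert-giraldo-velez}, replacing the Gorenstein and Cohen--Macaulay hypotheses by Lemma \ref{lemma:3.1}, Proposition \ref{propimp}, and the identification $\Hom_{\mathcal{D}_\textup{sg}(\A\textup{-mod})}(V,V)\cong\SEnd_\A(V)$ coming from \cite{chenxw3}. You instead reduce parts (i) and (ii) to the results already proved in this paper for complexes---Proposition \ref{prop2}, Lemma \ref{lemma:2.10} and Theorem \ref{thm:3.6}---by regarding $V$ as a stalk complex and invoking \cite[Prop.\ 2.5.1]{blehervelez2}, with the same \cite{chenxw3} bridge supplying the hypothesis $\Hom_{\mathcal{D}_\textup{sg}(\A\textup{-mod})}(V^\bullet,V^\bullet)=\k$ (which agrees with $\Hom_{\mathcal{D}^-_{\mathfrak{perf}}(\A)}(V^\bullet,V^\bullet)$ because $\mathcal{D}_\textup{sg}(\A\textup{-mod})$ is a full subcategory of $\mathcal{D}^-_{\mathfrak{perf}}(\A)$). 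Your reduction is more economical for the universality statement and for part (ii), since it reuses work already done, whereas the paper's route stays entirely at the module level and so is self-contained relative to the earlier module papers; for the syzygy claim in (i) you, like the paper, fall back on the module-level argument of \cite{velez2}, so there the two proofs coincide.

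Two points should be tightened, though neither is a fatal gap. First, universality does not transfer along a bare isomorphism of rings in $\hat{\Ca}$: representability is a property of the deformation functor, so you need natural isomorphisms of functors, which are in fact available---\cite[Prop.\ 2.5.1]{blehervelez2} identifies $\hat{\Fun}_{V^\bullet}$ for a stalk complex with the module deformation functor, and the proof of Theorem \ref{thm:3.6} constructs the natural isomorphism $\Xi_R$---so you should cite these rather than argue that ``an isomorphism takes a hull to a hull.'' Second, your syzygy step is only a sketch; the actual content, as in \cite{velez2}, is that for a lift $M$ of $V$ over an Artinian $R$ the kernel of a projective $R\A$-cover of $M$ is free over $R$ and reduces to $\Omega V$, and it is here that Gorenstein projectivity (via Lemma \ref{lemma:3.1} and the vanishing $\Ext^i_\A(V,\A)=0$ for $i>0$) replaces the Cohen--Macaulay and Gorenstein hypotheses of the earlier papers.
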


We now apply the results in Theorem \ref{thm:4.5} (i) to a particular non-Gorenstein finite dimensional $\k$-algebra. 
\begin{example}
Let $\A_0$ be the basic $3$-cycle Nakayama $\k$-algebra with quiver $Z_3$ as in Figure \ref{fig1} with normalized admissible sequence $\mathbf{c}(\A_0)=(8,9,9)$ as explained in \cite[\S 3.1]{chenye}. 
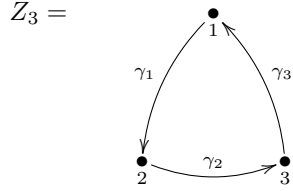
\begin{figure}[ht]
\begin{align*}
Z_3 &=\xymatrix@1@=20pt{
&&\underset{1}{\bullet}\ar@/_/[ddl]_{\gamma_1}&&\\\\
&\underset{2}{\bullet}\ar@/_/[rr]^{\gamma_2}&&\underset{3}{\bullet}\ar@/_/[uul]_{\gamma_3}&
}
%\\\\
%I&= \{\gamma_1(\gamma_3\gamma_2\gamma_1)^3, \gamma_3\gamma_2(\gamma_1\gamma_3\gamma_2)^3, \gamma_1\gamma_3(\gamma_2\gamma_1\gamma_3)^3\}.
\end{align*}
\caption{The quiver of the basic $3$-cycle Nakayama $\k$-algebra $\A_0$.}\label{fig1}
\end{figure}

It follows from \cite[Prop. 3.14 (3)]{chenye} for the case $k=2$ that $\A_0$ is a non-Gorenstein Nakayama algebra which is not CM-free in the sense of \cite{chenxw3}. Moreover, the isomorphism classes of finitely generated indecomposable Gorenstein projective $\A_0$-modules are represented by the string $\A_0$-modules (in the sense of \cite{buri}) $V_1=M[\gamma_3\gamma_2]$ and $V_2=M[\gamma_3\gamma_2(\gamma_1\gamma_3\gamma_2)]$. Since $\A_0$ is in particular special biserial, we can use the description of morphisms between string $\A_0$-modules as discussed in \cite{krause}. It is straightforward to show that $\End_{\A_0}(V_1)=\k$ and $\SEnd_{\A_0}(V_2)=\k$. Moreover, using the composition series of the indecomposable projective $\A_0$-modules, we obtain that $\Omega V_2=V_1$. We also obtain that $\Ext_{\A_0}^1(V_2,V_2)=\k$, which implies that the universal deformation ring $R(\A,V_2)$ of $V_2$ is isomorphic to a quotient of $\k[[t]]$. In view of Theorem \ref{thm:4.5}(i), we calculate $R(\A_0,V_2)$ by calculating $R(\A,V_1)$. Note that by e.g. \cite[Lemma 2.1.8]{chenxw4}, we have $\Ext_{\A_0}^1(V_1,V_1)=\SHom_\A(\Omega V_1, V_1)=\k$. Let $M_1=M[\gamma_3\gamma_2\gamma_1\gamma_3\gamma_2]$ and consider the following short exact sequences of $\A_0$-modules:
\begin{align}
0\to V_1\xrightarrow{\iota_1} M_1\xrightarrow{\pi_1}V_1\to 0,\label{seqn1}\\
0\to M_1\xrightarrow{\iota_2} P_2\xrightarrow{\pi_2}M_1\to 0,\label{seqn2}
\end{align}
where $P_2$ is the projective cover of  the simple $\A_0$-module $S_2$ corresponding to the vertex $2$. We obtain then by using (\ref{seqn1}) and (\ref{seqn2}) that $M_1$ and $P_2$ define lifts in the sense of \cite{blehervelez} of $V_1$ over $\k[[t]]/(t^2)$ and $\k[[t]]/(t^3)$,  respectively. In particular, $M_1$ defines a non-trivial lift of $V_1$ over the ring of dual numbers $\k[[t]]/(t^2)$, where the action of $t$ is given by $\iota_1 \circ \pi_1$. This implies that there exists a unique surjective $\k$-algebra morphism $\psi: R(\A_0, V_1])\to \k[[t]]/(t^2)$ in $\hat{\Ca}$ corresponding to the deformation defined by $M_1$. By using similar arguments to those in the proof of \cite[Claim 4.1.1]{velez}, it is straightforward to show that $R(\A,V_2)\cong R(\A,V_1)\cong \k[[t]]/(t^3)$. 
\end{example}

%\nocite{*}
\bibliographystyle{amsplain}
\bibliography{UniversalComplexesGorenstein}

\end{document}